\documentclass[11pt]{article}

\usepackage[margin=1in]{geometry}

\usepackage{amsmath,amssymb,amsthm,mathtools}
\numberwithin{equation}{section}
\usepackage{mathrsfs}
\usepackage{enumitem}
\usepackage{tikz-cd}

\usepackage[hyperfootnotes=false]{hyperref}
\usepackage[nameinlink]{cleveref}

\hypersetup{
  colorlinks=true,
  citecolor=blue,
  linkcolor=blue,
  urlcolor=blue,
  pdftitle={Tropicalization and Polyhedral Completions of Weyl Algebras},
  pdfauthor={Jen-Chieh Hsiao},
  pdfkeywords={Weyl algebra, tropical D-ideal, nonarchimedean seminorm, polyhedral completion, flatness, ringed G-site}
}

\theoremstyle{plain}
\newtheorem{theorem}{Theorem}[section]
\newtheorem{proposition}[theorem]{Proposition}
\newtheorem{lemma}[theorem]{Lemma}
\newtheorem{corollary}[theorem]{Corollary}

\theoremstyle{definition}

\theoremstyle{remark}
\newtheorem{remark}[theorem]{Remark}

\title{Tropicalization and Polyhedral Completions of Weyl Algebras}

\author{Jen-Chieh Hsiao}
\date{}

\begin{document}

\maketitle
\begingroup
\renewcommand{\thefootnote}{}
\makeatletter
\renewcommand{\@makefntext}[1]{\noindent #1}
\makeatother
\footnotetext{\raggedright
Department of Mathematics, National Cheng Kung University, Tainan 701, Taiwan.\\
Email: \href{mailto:jhsiao@mail.ncku.edu.tw}{\nolinkurl{jhsiao@mail.ncku.edu.tw}}.\\
\href{https://orcid.org/0000-0001-7244-8312}{\color{black}ORCID: 0000-0001-7244-8312}.}
\endgroup
\setcounter{footnote}{0}

\begin{abstract}
In nonarchimedean toric geometry, compact rational polyhedra give rise to
affinoid domains and integral models. Motivated by this picture, we construct
weighted and polyhedral completions of the Weyl algebra \(D=D_n(K)\) over a
complete nonarchimedean field. Compact polyhedra \(\Delta\) in the admissible
weight region \(\Omega\subseteq\mathbb R^{2n}\) yield Banach completions
\(D_\Delta\), while arbitrary polyhedra yield
Fr\'echet completions by compact exhaustion. We prove that these algebras glue
over polyhedral coverings to form a structure sheaf on a polyhedral \(G\)-site
in the seminorm space of \(D\). We also establish that the tropical sets of
left ideals are supports of finite \(\Gamma\)-rational polyhedral complexes,
where \(\Gamma\) is the value group of \(K\), and determine natural subsites.

When \(K\) is discretely valued, we prove that the completions attached to
\(\Gamma\)-rational polyhedra are flat over \(D\) on both sides. This
establishes an exact base-change functor from finitely generated left
\(D\)-modules to sheaves on the \(\Gamma\)-rational subsite. For compact
integral \(\Delta\) in the interior of \(\Omega\), we further construct a
finite-type integral model whose completion is the unit ball of \(D_\Delta\),
prove that its special fiber is a toric face ring, and describe the reduction
of Gauss points in terms of its toric strata.
\end{abstract}

\begingroup
\small
\medskip
\noindent\textit{2020 Mathematics Subject Classification.}
Primary 14T20; Secondary 14G22, 16S32, 13P10.

\smallskip
\noindent\textit{Keywords.}
Weyl algebra, tropical \(D\)-ideal, nonarchimedean multiplicative seminorm,
polyhedral completion, flatness, ringed \(G\)-site.
\endgroup

\section{Introduction}\label{sec:introduction}

Nonarchimedean analytic geometry describes analytic points by multiplicative
seminorms, while tropicalization records part of this information in
polyhedral form \cite{Berkovich90,GublerGuide}. In the toric setting, compact
rational polyhedra determine affinoid domains with explicit algebras of
analytic functions and integral models whose reductions reflect their face
structure \cite{Gubler07,GublerGuide}. Foster and Payne further organize
tropical polyhedral domains into a \(G\)-topology with structure sheaves
\cite{FosterPayne}. Section~\ref{subsec:toric-background} recalls the parts of
this commutative picture used below.

In this article, we develop an analogous polyhedral analytic framework for the
Weyl algebra. Weighted filtrations are standard in \(D\)-module theory, where
they control associated graded modules and characteristic varieties
\cite{SST00,ACJG00}. Over a nonarchimedean field, suitable weights also define
multiplicative Gauss norms and corresponding completions of the Weyl algebra
\cite{Pangalos}. We show that, as the weight varies, these single-weight
completions extend naturally to completions attached to polyhedra in the
weight space, fit together by restriction and gluing, and support exact base
change for finitely generated \(D\)-modules under the hypotheses stated below.

More precisely, let \(K\) be a complete nonarchimedean field, let
\(D=D_n(K)\) be the \(n\)-th Weyl algebra, and write
\(\operatorname{val}=-\log|\cdot|\) and \(\Gamma=\operatorname{val}(K^*)\).
The weighted Gauss norms are multiplicative on the admissible region
\[
\Omega=\{(u,v)\in\mathbb R^{2n}:u_i+v_i\leq0\text{ for all }i\}.
\]
For \(w\in\Omega\), we write \(D_w\) for the corresponding Banach
completion. A nonempty compact polyhedron \(\Delta\subseteq\Omega\)
determines the Banach completion \(D_\Delta\) for the supremum of these
norms, while an arbitrary nonempty polyhedron in \(\Omega\) determines a
Fr\'echet algebra \(D_\Delta^{\mathrm{Fr}}\) by compact exhaustion.

We describe the bounded spectra of the Banach completions in the space
\(\mathcal S(D)\) of nonarchimedean multiplicative seminorms on \(D\)
extending the absolute value on \(K\). Evaluation on the standard generators
defines a tropicalization map \(\operatorname{trop}:\mathcal S(D)\to\Omega\),
and the weighted Gauss norms define a continuous section. For a nonempty
compact polyhedron \(\Delta\subseteq\Omega\),
Theorem~\ref{thm:compact-spectrum} identifies the bounded spectrum of
\(D_\Delta\) with
\[
U_{\Delta^\uparrow}=\operatorname{trop}^{-1}(\Delta^\uparrow),
\qquad
\Delta^\uparrow=(\Delta+\mathbb R_{\geq0}^{2n})\cap\Omega.
\]
We also show that the Gauss points over the vertices of \(\Delta\) compute
the defining norm of \(D_\Delta\). Thus the compact polyhedral completion has
an explicit bounded spectrum and a finite norming set.

The main algebraic result is two-sided flatness. When \(K\) is discretely
valued, we prove that \(D\to D_\Delta\) is flat for every nonempty compact
\(\Gamma\)-rational polyhedron \(\Delta\subseteq\Omega\)
(Theorem~\ref{thm:compact-flat}), and that
\(D\to D_\Delta^{\mathrm{Fr}}\) is flat for every nonempty
\(\Gamma\)-rational polyhedron in \(\Omega\)
(Theorem~\ref{thm:frechet-flat}). For compact polyhedra, the proof passes
through finite-type models over the valuation ring and their adic completions;
the general case is obtained from compact exhaustion by an inverse-limit
argument. We also prove that, for a finite presentation \(M\simeq D^r/N\),
the submodule \(D_\Delta^{\mathrm{Fr}}N\) is closed in
\((D_\Delta^{\mathrm{Fr}})^r\). Together these results show that base change
to \(D_\Delta^{\mathrm{Fr}}\) is exact and admits a compatible
projective-limit description over compact exhaustions
(Corollary~\ref{cor:exact-base-change}).

The polyhedral completions also fit naturally into a sheaf-theoretic
structure. The assignments
\[
U_\Delta\longmapsto D_\Delta^{\mathrm{Fr}},
\qquad U_\Delta=\operatorname{trop}^{-1}(\Delta),
\]
form a structure sheaf on the polyhedral \(G\)-site
(Theorem~\ref{thm:structure-sheaf}). When \(K\) is discretely valued,
flatness then allows finitely generated left \(D\)-modules to extend over the
\(\Gamma\)-rational subsite: every such module \(M\) determines a sheaf with
sections \(D_\Delta^{\mathrm{Fr}}\otimes_D M\), and the resulting functor is
exact (Theorem~\ref{thm:module-sheaf-exact}). Here the module-sheaf gluing is
not automatic from the sheaf property of the section algebras. For a binary
compact covering, we establish a Mayer--Vietoris sequence for the polyhedral
completions and use flatness to preserve its exactness after tensoring with
\(M\). The general finite-covering case then follows by refining the cover to
a polyhedral subdivision and applying the binary gluing argument repeatedly.
Arbitrary admissible coverings are handled by passing to compatible compact
subdomains.

The finite-type models used for compact flatness also carry a geometric
reduction theory. Assume that \(K\) is discretely valued, with valuation ring
\(R\), uniformizer \(\pi\), and residue field \(k=R/\pi R\), and normalize
the valuation by \(\operatorname{val}(\pi)=1\). Let \(\Delta\) be a nonempty
compact polyhedron with integral vertices in the interior \(\Omega^\circ\) of
\(\Omega\). We construct a finite-type algebra \(A_\Delta\) over \(R\) whose
\(\pi\)-adic completion is the unit ball \(D_\Delta^\circ\), and prove that
its special-fiber algebra is the toric face ring
\[
A_\Delta/\pi A_\Delta\simeq k[\Sigma_{\Delta^+}],
\]
where \(\Sigma_{\Delta^+}\) is the minimum normal fan of the upper hull
\(\Delta^+=\Delta+\mathbb R_{\geq0}^{2n}\)
(Theorem~\ref{thm:toric-face-reduction}). We then define a reduction map from
the bounded spectrum of \(D_\Delta\) to \(\operatorname{Spec}k[\Sigma_{\Delta^+}]\)
by sending a seminorm \(\eta\) to the prime ideal
\[
\{\overline f:f\in D_\Delta^\circ,\ \eta(f)<1\},
\]
where \(\overline f\) denotes the image of \(f\) in
\(D_\Delta^\circ/\pi D_\Delta^\circ\simeq k[\Sigma_{\Delta^+}]\). For the
Gauss point associated with \(w\in\Delta^\uparrow\), we show that this prime
is the torus-invariant prime determined by the normal cone of the face of
\(\Delta^+\) whose relative interior contains \(w\)
(Theorem~\ref{thm:reduction-map} and Corollary~\ref{cor:gauss-toric-strata}).
Thus the position of \(w\) in \(\Delta^+\) determines both its analytic Gauss
point and its algebraic reduction.

We return to the tropicalization of \(D\)-ideals studied in
\cite{HsiaoTropD}. For a left ideal \(I\subseteq D\), we define
\(\operatorname{Trop}_D(I)\) as the closure in \(\Omega\) of the
monomial-free tropical locus in \(\Omega^\circ\), and prove that it is the
support of a finite \(\Gamma\)-rational polyhedral complex
(Theorem~\ref{thm:trop-ideal-polyhedral}). The \(\Gamma\)-rational polyhedra
contained in this tropical set therefore determine a natural subsite of the
\(\Gamma\)-rational polyhedral site. A left ideal need not be two-sided, so
\(D/I\) is naturally a left \(D\)-module but need not be a quotient algebra.
Under the discrete-valuation hypothesis, the ideal-dependent analytic object
is the restriction of the cyclic module sheaf \(\widetilde{D/I}\) to this
subsite.

The integral models and flatness results developed in this article also
underlie our ongoing work on characteristic varieties and cycles of
\(D\)-modules over discretely valued fields. Using integral reduction, we
construct weighted characteristic cycles and show that the tropicalizations
of their restrictions to the coordinate torus assemble into a balanced
tropical cycle on \(\Omega^\circ\). In one variable and equicharacteristic
zero, we further show that this invariant, together with the rank, determines
the intrinsic subsidiary-radius multiset of a differential module over
\(K(T)\) as the Gauss norm varies.

We mention here some related strands of work. Soibelman develops
noncommutative analytic spaces over nonarchimedean fields, including
noncommutative analytic affine spaces and quantum analytic tori
\cite{Soibelman09}. In a different direction, Ardakov and Wadsley construct
sheaves of completed differential operators on smooth rigid analytic spaces,
whose local section algebras are Fr\'echet--Stein, together with the
corresponding theory of coadmissible modules \cite{ArdakovWadsley19}. These
works provide broader background for the nonarchimedean Banach and Fr\'echet
structures considered here.

The paper is organized as follows. Section~\ref{sec:preliminaries} recalls
the nonarchimedean, polyhedral, and sheaf-theoretic background together with
the toric constructions used for comparison. Sections~\ref{sec:seminorms}
and~\ref{sec:polyhedral-completions} develop the seminorm space and the
weighted and polyhedral completions. Section~\ref{sec:flatness} proves the
flatness and base-change results, and Section~\ref{sec:integral-models}
develops the integral models and toric-face reduction.
Section~\ref{sec:polyhedral-site} constructs the polyhedral \(G\)-site and its
structure sheaves. Section~\ref{sec:module-sheaves} constructs the associated
module sheaves and returns to tropical \(D\)-ideals.

\section{Preliminaries}\label{sec:preliminaries}

Throughout, \(K\) denotes a complete nonarchimedean field with absolute
value \(|\cdot|\). We write
\[
\operatorname{val}(a)=-\log|a|,
\qquad
\Gamma=\operatorname{val}(K^*)\subseteq\mathbb R,
\]
and allow the absolute value to be trivial. We set
\(\operatorname{val}(0)=+\infty\).

\subsection{Nonarchimedean Banach and Fr\'echet algebras}
\label{subsec:banach-frechet-background}

A nonarchimedean norm on a \(K\)-vector space satisfies
\[
\|av\|=|a|\|v\|,
\qquad
\|v+v'\|\leq\max\{\|v\|,\|v'\|\}.
\]
A Banach \(K\)-algebra is a complete normed \(K\)-algebra whose norm is
submultiplicative.

For a Banach \(K\)-algebra \(A\), not necessarily commutative, let
\(\mathcal M(A)\) denote the space of bounded nonarchimedean multiplicative
seminorms on \(A\) extending \(|\cdot|\), endowed with the topology of
pointwise convergence. Here bounded means that \(\eta(a)\leq C\|a\|\)
for some \(C>0\) and all \(a\in A\).

Every \(\eta\in\mathcal M(A)\) is contractive. Indeed, applying this
inequality to \(a^m\), we obtain
\[
\eta(a)^m\leq C\|a^m\|\leq C\|a\|^m.
\]
Taking \(m\)-th roots and letting \(m\to\infty\) yields
\(\eta(a)\leq\|a\|\). Consequently, \(\mathcal M(A)\) is a closed
subset of \(\prod_{a\in A}[0,\|a\|]\) and is therefore compact.

The Fr\'echet \(K\)-algebras occurring in this paper are countable projective
limits \(A=\varprojlim_j A_j\) of Banach \(K\)-algebras with continuous
transition homomorphisms, endowed with the topology defined by the induced
Banach seminorms. We use no general theory of Fr\'echet algebras beyond this
concrete setting. For background on nonarchimedean locally convex spaces,
see \cite[Chapter~I]{SchneiderNFA}.

\subsection{Polyhedra and normal fans}\label{subsec:polyhedral-background}

A polyhedron in \(\mathbb R^m\) is an intersection of finitely many closed
half-spaces. We allow the empty polyhedron when discussing polyhedral sites.
A polyhedron is \(\Gamma\)-rational if it can be written as
\[
\Delta=\{w\in\mathbb R^m:
\langle a_j,w\rangle\leq b_j,\ j=1,\ldots,r\},
\]
with \(a_j\in\mathbb Z^m\) and \(b_j\in\Gamma\). We regard the empty
polyhedron as \(\Gamma\)-rational. A compact polyhedron is called a
polytope, and \(\operatorname{Vert}(\Delta)\) denotes its set of vertices.

A nonempty face of a polyhedron is the locus on which a linear functional
attains its minimum; we also allow the empty face. We write
\(\operatorname{relint}(F)\) for the interior of \(F\) in its affine span.
A finite polyhedral complex \(\Sigma\) is a finite collection of polyhedra
such that every face of a member belongs to \(\Sigma\), and the intersection
of two members is either empty or a common face. Its support is
\(|\Sigma|=\bigcup_{\sigma\in\Sigma}\sigma\). A subcomplex is a
subcollection that is itself a polyhedral complex. The complex is
\(\Gamma\)-rational if all of its cells are \(\Gamma\)-rational. Its
support need not be convex.

For a nonempty polyhedron \(\Delta\subseteq\mathbb R^m\), its recession
cone is
\[
\operatorname{rec}(\Delta)=\{r\in\mathbb R^m:
 w+tr\in\Delta\text{ for every }w\in\Delta,\ t\geq0\}.
\]
For a cone \(\sigma\subseteq\mathbb R^m\), write
\(\sigma^\vee=\{a\in(\mathbb R^m)^*:
\langle a,r\rangle\geq0\text{ for every }r\in\sigma\}\).
We use the minimum convention for normal fans. For a nonempty face \(F\)
of \(\Delta\), its normal cone is
\[
N_\Delta(F)=\{a\in(\mathbb R^m)^*:
\langle a,w\rangle\leq\langle a,w'\rangle
\text{ for every }w\in F,\ w'\in\Delta\}.
\]
These cones form the normal fan \(\Sigma_\Delta\), whose support is
\[
|\Sigma_\Delta|=\operatorname{rec}(\Delta)^\vee.
\]
In particular, the normal fan need not be complete when \(\Delta\) is
unbounded. With this convention,
\(a\in\operatorname{relint}N_\Delta(F)\) precisely when \(F\) is the
minimizing face of \(a\).

We will also use compact polyhedral exhaustions. A nested sequence
\(C_1\subseteq C_2\subseteq\cdots\subseteq\Delta\) of nonempty compact
polyhedra is cofinal in \(\Delta\) if every compact polyhedron contained in
\(\Delta\) is contained in some \(C_j\). Every nonempty polyhedron admits
such an exhaustion; for example, after omitting empty initial terms, one
may take \(C_j=\Delta\cap[-j,j]^m\). When \(\Gamma=\mathbb Z\) and
\(\Delta\) is \(\Gamma\)-rational, we will always choose the exhaustion
to be \(\Gamma\)-rational, as in this example. Finite intersections of
\(\Gamma\)-rational polyhedra are again \(\Gamma\)-rational.

For background on polyhedra, polyhedral complexes, normal fans, and
recession cones, see \cite[Sections~2.3 and~3.5]{MS15}.

\subsection{\texorpdfstring{\(G\)}{G}-sites, sheaves, and gluing}
\label{subsec:sites-sheaves-background}

Let \(\mathcal C\) be a category admitting the fiber products used below.
A Grothendieck pretopology specifies, for each object \(U\), certain
families \(\{U_i\to U\}_{i\in I}\), called covering families, subject
to the following conditions:
\begin{enumerate}[label=\textup{(\roman*)}]
\item Every isomorphism \(V\to U\) is a covering.
\item If \(\{U_i\to U\}\) is a covering and \(V\to U\) is a morphism,
then \(\{V\times_U U_i\to V\}_{i\in I}\) is a covering.
\item If \(\{U_i\to U\}\) is a covering and each \(U_i\) has a covering
\(\{V_{ij}\to U_i\}_{j\in J_i}\), then the composite family
\(\{V_{ij}\to U\}_{i,j}\) is a covering.
\end{enumerate}
We use the associated Grothendieck topology and refer to \(\mathcal C\)
as a \(G\)-site.

A presheaf \(\mathscr F\) on \(\mathcal C\) is a contravariant functor.
For a covering \(\{U_i\to U\}\), write \(U_{ij}=U_i\times_U U_j\).
A family \(s_i\in\mathscr F(U_i)\) is compatible if the restrictions of
\(s_i\) and \(s_j\) to \(U_{ij}\) agree for every \(i,j\). The
presheaf is a sheaf if every compatible family is the restriction of a
unique section in \(\mathscr F(U)\).

For a presheaf of abelian groups, this condition is equivalently the
exactness of
\begin{equation}\label{eq:finite-gluing-property}
0\longrightarrow\mathscr F(U)
\longrightarrow\prod_i\mathscr F(U_i)
\xrightarrow{\delta}\prod_{i,j}\mathscr F(U_{ij}),
\end{equation}
where \(\delta((s_i)_i)=(s_i|_{U_{ij}}-s_j|_{U_{ij}})_{i,j}\).
For a specified finite covering, we say that \(\mathscr F\) has the
\emph{gluing property} if this sequence is exact. We use the same
terminology for presheaves of rings or modules by passing to their
underlying additive groups.

A ringed \(G\)-site is a pair \((\mathcal C,\mathcal O)\), where
\(\mathcal O\) is a sheaf of rings. No commutativity is assumed. A sheaf
of left \(\mathcal O\)-modules is a sheaf \(\mathscr M\) such that each
\(\mathscr M(U)\) is a left \(\mathcal O(U)\)-module and restrictions
satisfy \((fm)|_V=f|_V\,m|_V\) for every morphism \(V\to U\).

When \(\mathcal D\subseteq\mathcal C\) is a full subcategory stable under
the fiber products occurring in its inherited coverings, we equip it with
the induced pretopology. The restriction of a sheaf on \(\mathcal C\) is
then a sheaf on \(\mathcal D\). In particular, a ringed \(G\)-site
restricts to a ringed subsite in this way.

For the sheaves of Banach or Fr\'echet \(K\)-algebras considered below,
the restriction homomorphisms are continuous. The topology is additional
objectwise structure; the sheaf condition is the algebraic condition above.
For general background on sites and sheaves, see
\cite[Chapters~16--17 and Section~18.1]{KashiwaraSchapira}. For \(G\)-topologies in
nonarchimedean geometry, see \cite[Sections~9.1--9.2]{BGR} and
\cite{Berkovich90}.

\subsection{Toric polytopal domains and tropical \texorpdfstring{\(G\)}{G}-sites}
\label{subsec:toric-background}

Let \(T=\operatorname{Spec}K[M]\) be a split torus, put
\(N_{\mathbb R}=\operatorname{Hom}(M,\mathbb R)\), and write \(\chi^m\)
for the character associated with \(m\in M\). Tropicalization is the
continuous map
\[
\operatorname{trop}:T^{\mathrm{an}}\longrightarrow N_{\mathbb R},
\qquad\langle m,\operatorname{trop}(p)\rangle=-\log|\chi^m(p)|.
\]
For \(w\in N_{\mathbb R}\), the Gauss formula
\[
s(w)\!\left(\sum_m a_m\chi^m\right)
=\max_m|a_m|e^{-\langle w,m\rangle}
\]
defines a multiplicative seminorm and a section of tropicalization.

Let \(\Delta\subseteq N_{\mathbb R}\) be a nonempty compact
\(\Gamma\)-rational polyhedron. Then
\(U_\Delta^{\mathrm{com}}=\operatorname{trop}^{-1}(\Delta)\) is a
polytopal affinoid domain \cite[Proposition~4.1]{Gubler07}. We use the
superscript \({\rm com}\) to distinguish the toric domains and section
algebras in this subsection from the analogous ones for the Weyl algebra
introduced later. The algebra of analytic functions on
\(U_\Delta^{\mathrm{com}}\) is
\[
A_\Delta^{\mathrm{com}}=
\left\{\sum_{m\in M}a_m\chi^m:
\operatorname{val}(a_m)+\min_{w\in\Delta}\langle w,m\rangle
\longrightarrow+\infty\text{ as }\|m\|\to\infty\right\},
\]
where \(\|\cdot\|\) is any norm on
\(M_{\mathbb R}=M\otimes_{\mathbb Z}\mathbb R\). Its supremum norm
satisfies
\[
\|f\|_\Delta=\sup_{p\in U_\Delta^{\mathrm{com}}}|f(p)|
=\sup_{w\in\Delta}s(w)(f)
=\max_{q\in\operatorname{Vert}(\Delta)}s(q)(f).
\]
Thus \(s(\Delta)\), and already the Gauss points over the vertices of
\(\Delta\), is \emph{norming} for \(A_\Delta^{\mathrm{com}}\): it computes
the supremum norm of every analytic function on \(U_\Delta^{\mathrm{com}}\).
See
\cite[Propositions~6.17 and~6.19, Lemma~6.21]{GublerGuide} for these
descriptions and identifications.

Write \(K^\circ=\{a\in K:|a|\leq1\}\) and
\(K^{\circ\circ}=\{a\in K:|a|<1\}\) for the valuation ring and its
maximal ideal, and let \(\widetilde K=K^\circ/K^{\circ\circ}\) be the
residue field. The unit ball \((A_\Delta^{\mathrm{com}})^\circ\) for
\(\|\cdot\|_\Delta\) consists of the series above satisfying
\(\operatorname{val}(a_m)+\min_{w\in\Delta}\langle w,m\rangle\geq0\)
for all \(m\). Its Laurent polynomials form Gubler's integral algebra
\[
K[M]^\Delta=\left\{\sum_m a_m\chi^m\in K[M]:
\operatorname{val}(a_m)+\min_{w\in\Delta}\langle w,m\rangle\geq0
\text{ for all }m\right\}.
\]
The associated polyhedral scheme is
\(\mathcal U_\Delta=\operatorname{Spec}K[M]^\Delta\)
\cite[Section~6.11]{GublerGuide}, with special fiber
\[
\widetilde{\mathcal U}_\Delta=
\operatorname{Spec}\!\left(K[M]^\Delta/K^{\circ\circ}K[M]^\Delta\right).
\]
The reduction map is
\[
\operatorname{red}_\Delta:U_\Delta^{\mathrm{com}}
\longrightarrow\widetilde{\mathcal U}_\Delta,
\qquad
p\longmapsto\{\overline f:f\in K[M]^\Delta,\ |f(p)|<1\},
\]
where \(\overline f\) denotes the class of \(f\) modulo
\(K^{\circ\circ}K[M]^\Delta\); see
\cite[Sections~4.8--4.9 and Lemma~6.21]{GublerGuide}.
By \cite[Proposition~6.22]{GublerGuide}, the torus orbits \(Z\) of
\(\widetilde{\mathcal U}_\Delta\) are in order-reversing bijection with
the open faces \(\tau=\operatorname{relint}(F)\) of \(\Delta\), both
ordered by inclusion of closures. This correspondence is characterized by
\[
Z=\operatorname{red}_\Delta\!\left(\operatorname{trop}^{-1}(\tau)\right),
\qquad
\tau=\operatorname{trop}\!\left(\operatorname{red}_\Delta^{-1}(Z)\right).
\]
These are the aspects of the commutative construction used for comparison
with the integral models and reduction map of
Section~\ref{sec:integral-models}.

For an unbounded \(\Gamma\)-rational polyhedron containing no affine line,
the toric construction uses its recession cone to determine an affine toric
chart. The associated affinoid domain lies in that chart, and its
intersection with \(T^{\mathrm{an}}\) is the tropical inverse image of the
original polyhedron; see \cite[Propositions~6.17 and~6.19, Lemma~6.21]{GublerGuide}.
Our construction for the Weyl algebra keeps the ambient seminorm space fixed and treats
arbitrary polyhedra by compact exhaustion.

Over a complete algebraically closed nontrivially valued field, Foster and
Payne equip tropicalizations with a \(G\)-topology whose basic polyhedral
domains pull back to analytic domains and with an integral structure sheaf
arising from Gubler models; see \cite[Section~5.1]{FosterPayne}.
Sections~\ref{sec:polyhedral-site} and~\ref{sec:module-sheaves} construct
the polyhedral \(G\)-site and its algebra and module sheaves for the Weyl
algebra.

\section{Seminorms and tropicalization for the Weyl algebra}
\label{sec:seminorms}

We begin with multiplicative seminorms and the weighted Gauss norms determined
by the values of the standard generators. The
resulting admissible weight region will parametrize all completions considered
in the following sections.

\subsection{The seminorm space and admissible region}

Let \(D=D_n(K)\) be the \(n\)-th Weyl algebra. Every \(P\in D\) has a
unique finite expansion
\[
P=\sum_{\alpha,\beta\in\mathbb N^n}
 c_{\alpha,\beta}x^\alpha\partial^\beta.
\]
We call the elements \(x^\alpha\partial^\beta\) the \emph{ordered
monomials}. The weighted norms and completions below are defined with respect
to this expansion.

Let \(\mathcal S(D)\) be the set of nonarchimedean multiplicative seminorms
\(\eta:D\to\mathbb R_{\geq0}\) extending the absolute value on \(K\).
Thus
\[
\eta(PQ)=\eta(P)\eta(Q),\qquad
\eta(P+Q)\leq\max\{\eta(P),\eta(Q)\},\qquad
\eta(a)=|a|
\]
for \(P,Q\in D\) and \(a\in K\). We equip \(\mathcal S(D)\) with the
topology of pointwise convergence. Since \(D\) is noncommutative, the
nonarchimedean inequality is included explicitly in the definition.

For \(\eta\in\mathcal S(D)\), the relation
\(\partial_i x_i=x_i\partial_i+1\) implies
\(\eta(x_i)\eta(\partial_i)\geq1\). Indeed, if this product were less
than \(1\), then \(\eta(x_i\partial_i)<1=\eta(1)\), so
\(\eta(x_i\partial_i+1)=1\), whereas multiplicativity would imply
\(\eta(\partial_i x_i)<1\). We therefore set
\[
\Omega=\{(u,v)\in\mathbb R^{2n}:u_i+v_i\leq0\text{ for all }i\}
\]
and define
\[
\operatorname{trop}:\mathcal S(D)\longrightarrow\Omega,
\qquad
\eta\longmapsto
\bigl(-\log\eta(x_1),\ldots,-\log\eta(x_n),
-\log\eta(\partial_1),\ldots,-\log\eta(\partial_n)\bigr).
\]
This map is continuous by the definition of the topology on \(\mathcal S(D)\).

\subsection{Weighted Gauss norms and the Gauss section}

For \(\varepsilon=(\varepsilon_1,\ldots,\varepsilon_{2n})\in
\mathbb R_{>0}^{2n}\), define
\[
\left|\sum_{\alpha,\beta}c_{\alpha,\beta}x^\alpha\partial^\beta
\right|_\varepsilon
=\max_{\alpha,\beta}|c_{\alpha,\beta}|
\varepsilon_1^{\alpha_1}\cdots\varepsilon_n^{\alpha_n}
\varepsilon_{n+1}^{\beta_1}\cdots\varepsilon_{2n}^{\beta_n}.
\]
This is a nonarchimedean norm on the underlying \(K\)-vector space of \(D\).
The following multiplicativity criterion is due to Pangalos
\cite[Lemma~1.2.4]{Pangalos}.

\begin{proposition}\label{prop:weighted-gauss-multiplicative}
The norm \(|\cdot|_\varepsilon\) is multiplicative if and only if
\[
\varepsilon_i\varepsilon_{n+i}\geq1\qquad(1\leq i\leq n).
\]
\end{proposition}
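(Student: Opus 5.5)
The necessity direction reuses the argument already given in Section~3.1. If \(|\cdot|_\varepsilon\) is multiplicative, it lies in \(\mathcal S(D)\). It is a nonarchimedean norm, restricts to \(|\cdot|\) on \(K\), and is multiplicative by assumption. The relation \(\partial_i x_i=x_i\partial_i+1\) then forces \(|x_i|_\varepsilon|\partial_i|_\varepsilon\geq1\). Since \(|x_i|_\varepsilon=\varepsilon_i\) and \(|\partial_i|_\varepsilon=\varepsilon_{n+i}\), this is exactly \(\varepsilon_i\varepsilon_{n+i}\geq1\).

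For sufficiency, assume \(\varepsilon_i\varepsilon_{n+i}\geq1\) for all \(i\). We must show \(|PQ|_\varepsilon=|P|_\varepsilon|Q|_\varepsilon\). The first step is submultiplicativity. By the nonarchimedean property, it suffices to check it on ordered monomials, \(|x^\alpha\partial^\beta\cdot x^\gamma\partial^\delta|_\varepsilon\leq\varepsilon^{\alpha+\gamma}\varepsilon'^{\beta+\delta}\), where \(\varepsilon'=(\varepsilon_{n+1},\dots,\varepsilon_{2n})\). The variables separate, so this reduces to the one-variable Leibniz formula
\[
\partial^b x^c=\sum_{k}\binom bk\frac{c!}{(c-k)!}\,x^{c-k}\partial^{b-k}.
\]
The coefficients are integers, so their absolute values are at most \(1\). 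The \(k\)-th term has weight \(\varepsilon_i^{c-k}\varepsilon_{n+i}^{b-k}=\varepsilon_i^c\varepsilon_{n+i}^b(\varepsilon_i\varepsilon_{n+i})^{-k}\leq\varepsilon_i^c\varepsilon_{n+i}^b\), using the hypothesis. Hence \(|PQ|_\varepsilon\leq|P|_\varepsilon|Q|_\varepsilon\).

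The reverse inequality is the main obstacle. The usual leading-term trick needs some care because the correction terms may have equal weight when \(\varepsilon_i\varepsilon_{n+i}=1\). My plan is to compare with a commutative product via an associated graded ring. Consider the filtration of \(D\) by \(|\cdot|_\varepsilon\): \(D_{\leq r}=\{P:|P|_\varepsilon\leq r\}\) and \(D_{<r}\). By submultiplicativity it is multiplicative, and we can form \(\operatorname{gr}D=\bigoplus_r D_{\leq r}/D_{<r}\). Multiplicativity of the norm is equivalent to \(\operatorname{gr}D\) having no zero divisors among homogeneous elements.

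Now compute \(\operatorname{gr}D\). The symbols of \(x_i\) and \(\partial_i\) satisfy \([\bar\partial_i,\bar x_i]=\bar1\) or \(0\), according to whether the weight of \(1\) equals that of \(x_i\partial_i\) (i.e.\ \(\varepsilon_i\varepsilon_{n+i}=1\)) or is strictly smaller. Here we also use that \(|m|\leq1\) for integers \(m\); when \(|p|<1\) for the prime \(p\) of the residue characteristic, factorial coefficients drop out of the graded pieces but cause no harm. Therefore \(\operatorname{gr}D\) is a tensor product over \(\widetilde K\)-graded pieces of \(\operatorname{gr}K\) (a graded field, i.e.\ a twisted Laurent-type ring over \(\widetilde K\)) with factors that are either polynomial rings \(\operatorname{gr}K[\bar x_i,\bar\partial_i]\) or Weyl algebras over \(\operatorname{gr}K\). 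Both kinds of factor are domains; for the Weyl factors one uses the order filtration, whose leading symbols multiply in a polynomial ring. To justify this identification, note that the ordered monomials \(x^\alpha\partial^\beta\), scaled by coefficients, form an orthogonal basis for \(|\cdot|_\varepsilon\). Their symbols therefore give a \(\operatorname{gr}K\)-basis of \(\operatorname{gr}D\), and the commutator computation determines the product. Since a tensor product of such domains is again a domain (by a PBW-type leading-term argument in the ordered basis), products of nonzero symbols are nonzero. This gives \(|PQ|_\varepsilon=|P|_\varepsilon|Q|_\varepsilon\).
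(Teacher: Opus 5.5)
The paper does not prove this proposition. It quotes it from Pangalos \cite[Lemma~1.2.4]{Pangalos}, so your argument stands on its own, and it is correct.

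\textbf{What is complete.} Necessity is exactly the computation of Section~3.1 applied to \(\eta=|\cdot|_\varepsilon\). The submultiplicativity step is also complete: the Leibniz coefficients are integers of absolute value at most \(1\), and \((\varepsilon_i\varepsilon_{n+i})^{-k}\leq1\). The reduction of supermultiplicativity to ``products of nonzero homogeneous elements of \(\operatorname{gr}_\varepsilon D\) are nonzero'' is valid. So is the identification of \(\operatorname{gr}_\varepsilon D\) as the \(\operatorname{gr}K\)-algebra with basis \(\bar x^\alpha\bar\partial^\beta\) and relations \([\bar\partial_i,\bar x_i]=\bar1\) or \(0\). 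This follows from orthogonality of the ordered monomials and from \(|x^\alpha\partial^\beta|_\varepsilon=\prod|x_i|_\varepsilon^{\alpha_i}|\partial_i|_\varepsilon^{\beta_i}\).

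\textbf{What to state explicitly.} The general slogan that a tensor product of domains is a domain is false, so two points carry the weight and should be written out.
\begin{enumerate}[label=\textup{(\roman*)}]
\item The PBW argument itself. Filter \(\operatorname{gr}_\varepsilon D\) by total \(\bar\partial\)-degree. In every residue characteristic, \(\bar\partial_i^b\bar x_i^c-\bar x_i^c\bar\partial_i^b\) has lower \(\bar\partial_i\)-degree. Hence the associated graded ring is the commutative polynomial ring \(\operatorname{gr}K[\bar x,\bar\xi]\).
\item This polynomial ring is a domain because \(\operatorname{gr}K\) is. Indeed, \(\operatorname{gr}K\) is commutative, graded by the torsion-free group \(|K^*|\), and every nonzero homogeneous element is invertible; comparing top-degree components then shows it has no zero divisors.
\end{enumerate}
With these made explicit, the proof is self-contained.

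\textbf{Comparison with a direct argument.} A shorter, more elementary route collapses your two filtrations into one term order. Among the terms of maximal weight in \(P\) and \(Q\), pick those whose \(\partial\)-exponents \(\beta_0,\delta_0\) are largest for a monomial order. Every Leibniz correction strictly lowers the \(\partial\)-exponent. Hence the maximal-weight part of the \(\partial^{\beta_0+\delta_0}\)-coefficient of \(PQ\) is the product of two polynomials in the commutative ring \(K[x]\). One concludes by multiplicativity of weighted Gauss norms on \(K[x]\).

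What your route buys is an explicit description of \(\operatorname{gr}_\varepsilon D\). It recovers the paper's isomorphism \(\operatorname{gr}_w(D)\simeq\operatorname{gr}(K)[x,\xi]\) for \(w\in\Omega^\circ\) (Section~3.3). It also shows that on the boundary of \(\Omega\) a Weyl factor over \(\operatorname{gr}K\) appears exactly for the indices with \(u_i+v_i=0\). The paper does not record this boundary description, but it is the precise reason the interior \(\Omega^\circ\) is where commutator corrections disappear in the graded ring.
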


For \(w=(u,v)\in\Omega\), take \(\varepsilon_i=e^{-u_i}\) and
\(\varepsilon_{n+i}=e^{-v_i}\). Since \(u_i+v_i\leq0\),
Proposition~\ref{prop:weighted-gauss-multiplicative} implies that
\[
\left|\sum_{\alpha,\beta}c_{\alpha,\beta}x^\alpha\partial^\beta\right|_w
=\max_{\alpha,\beta}|c_{\alpha,\beta}|
e^{-\langle u,\alpha\rangle-\langle v,\beta\rangle}
\]
is multiplicative. These norms provide a section of tropicalization.

\begin{proposition}\label{prop:gauss-section}
The map
\[
s:\Omega\longrightarrow\mathcal S(D),\qquad w\longmapsto|\cdot|_w,
\]
is a continuous section of \(\operatorname{trop}\). Moreover, if
\(w=\operatorname{trop}(\eta)\), then
\[
\eta(P)\leq s(w)(P)=|P|_w\qquad(P\in D).
\]
Thus \(s(w)\) is the greatest seminorm in the fiber
\(\operatorname{trop}^{-1}(w)\) for the pointwise order.
\end{proposition}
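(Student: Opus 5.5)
The proof has three parts: $s$ lands in $\mathcal S(D)$ and is a section of $\operatorname{trop}$; $s$ is continuous; and the comparison $\eta\le s(\operatorname{trop}\eta)$ holds.

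\emph{Section property.} For $w=(u,v)\in\Omega$, Proposition~\ref{prop:weighted-gauss-multiplicative} shows that $|\cdot|_w$ is multiplicative. It is nonarchimedean because it is a maximum over coefficients, and it restricts to $|\cdot|$ on $K$. Hence $s(w)\in\mathcal S(D)$. Evaluating on generators gives $|x_i|_w=e^{-u_i}$ and $|\partial_i|_w=e^{-v_i}$, so $\operatorname{trop}(s(w))=w$.

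\emph{Continuity.} The topology on $\mathcal S(D)$ is pointwise, so it suffices to show that $w\mapsto |P|_w$ is continuous for each fixed $P$. Since $P$ has finitely many ordered monomials,
\[
|P|_w=\max_{(\alpha,\beta)}|c_{\alpha,\beta}|\,e^{-\langle u,\alpha\rangle-\langle v,\beta\rangle}
\]
is a finite maximum of continuous functions of $w$, and is therefore continuous.

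\emph{Comparison.} Let $\eta\in\mathcal S(D)$ with $\operatorname{trop}(\eta)=w$, and write $P=\sum c_{\alpha,\beta}x^\alpha\partial^\beta$. Multiplicativity gives
\[
\eta(x^\alpha\partial^\beta)=\prod_i\eta(x_i)^{\alpha_i}\prod_i\eta(\partial_i)^{\beta_i}=e^{-\langle u,\alpha\rangle-\langle v,\beta\rangle},
\]
and $\eta(c)=|c|$ for $c\in K$. The nonarchimedean inequality, iterated over the finite sum, then yields $\eta(P)\le\max|c_{\alpha,\beta}|\,\eta(x^\alpha\partial^\beta)=|P|_w$. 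The one point to state carefully is that this is applied to the fixed ordered expansion of $P$; noncommutativity plays no role, because each ordered monomial is itself a product of generators. This shows $\eta\le s(w)$ pointwise on $D$.

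\emph{Greatest element.} Since $s(w)$ itself lies in the fiber $\operatorname{trop}^{-1}(w)$ by the section property, it is the greatest element of that fiber for the pointwise order.

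No step is a genuine obstacle. The only nontrivial input is multiplicativity of $|\cdot|_w$, and that is supplied by Pangalos's criterion (Proposition~\ref{prop:weighted-gauss-multiplicative}).
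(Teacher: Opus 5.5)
Your proof is correct and follows the paper's argument: the section property from the values on the generators, continuity from $w\mapsto|P|_w$ being a finite maximum of exponential-affine functions, and the domination $\eta(P)\le|P|_w$ from multiplicativity and the nonarchimedean inequality applied to the ordered expansion. Your additional check that $s(w)$ lies in $\mathcal S(D)$ is the remark the paper makes just before stating the proposition.
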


\begin{proof}
The equalities \(|x_i|_w=e^{-u_i}\) and \(|\partial_i|_w=e^{-v_i}\)
imply \(\operatorname{trop}\circ s=\operatorname{id}_\Omega\). For fixed
\(P\), the function \(w\mapsto|P|_w\) is the maximum of finitely many
exponential affine functions, so \(s\) is continuous. If
\(w=\operatorname{trop}(\eta)\) and
\(P=\sum c_{\alpha,\beta}x^\alpha\partial^\beta\), then
\[
\eta(P)\leq\max_{\alpha,\beta}|c_{\alpha,\beta}|
\eta(x)^\alpha\eta(\partial)^\beta=|P|_w.
\]
This proves the last assertion.
\end{proof}

For every \(\Delta\subseteq\Omega\), the restriction \(s|_\Delta\) is a
homeomorphism onto its image, with inverse induced by tropicalization. In
particular, \(s(\Delta)=s(\Omega)\cap\operatorname{trop}^{-1}(\Delta)\).
We will use these Gauss points later to compute the norms of compact
polyhedral completions.

\subsection{Weighted filtrations and tropical \texorpdfstring{\(D\)}{D}-ideals}
\label{subsec:weighted-initials}

For \(w=(u,v)\in\Omega\) and nonzero
\(P=\sum c_{\alpha,\beta}x^\alpha\partial^\beta\in D\), set
\[
\nu_w(P)=\min_{\alpha,\beta}
\bigl(\operatorname{val}(c_{\alpha,\beta})
+\langle u,\alpha\rangle+\langle v,\beta\rangle\bigr),
\]
and put \(\nu_w(0)=+\infty\). By
Proposition~\ref{prop:weighted-gauss-multiplicative},
\(\nu_w(PQ)=\nu_w(P)+\nu_w(Q)\). It therefore defines the filtration
\[
F_{\geq\gamma}D=\{P\in D:\nu_w(P)\geq\gamma\},\qquad\gamma\in\mathbb R.
\]
We write \(\operatorname{gr}_w(D)\) for the associated graded ring and
\(\sigma_w(P)\) for the principal symbol of a nonzero element \(P\).
For a left ideal \(I\subseteq D\), define
\[
\operatorname{in}_w(I)
=\bigl\langle\sigma_w(P):0\neq P\in I\bigr\rangle
\subseteq\operatorname{gr}_w(D).
\]
With the induced filtration on \(I\), this is the associated graded ideal
\(\operatorname{gr}_w(I)\).

On the strict interior
\[
\Omega^\circ=\{(u,v)\in\mathbb R^{2n}:u_i+v_i<0\text{ for all }i\},
\]
all nonprincipal terms arising from the commutation relations have strictly
larger filtered value. Hence
\[
\operatorname{gr}_w(D)\simeq
\operatorname{gr}(K)[x_1,\ldots,x_n,\xi_1,\ldots,\xi_n],
\qquad w\in\Omega^\circ,
\]
where \(\operatorname{gr}(K)\) is the graded field associated with the
valuation on \(K\).

For a left ideal \(I\subseteq D\), define its \emph{interior tropicalization} by
\begin{equation}\label{eq:trop-D-ideal-interior}
\operatorname{Trop}_D^\circ(I)
=\{w\in\Omega^\circ:\operatorname{in}_w(I)\text{ contains no monomial}\}.
\end{equation}
Here a monomial means an element \(a x^\alpha\xi^\beta\), where \(a\) is a
nonzero homogeneous element of \(\operatorname{gr}(K)\). We define
\begin{equation}\label{eq:trop-D-ideal}
\operatorname{Trop}_D(I)
=\overline{\operatorname{Trop}_D^\circ(I)}^{\,\Omega},
\end{equation}
where the closure is taken in \(\Omega\). Thus boundary points are included by
closure from the strict interior.

The finite tropical-basis theorem on the strict region gives the
polyhedrality needed below.

\begin{theorem}\label{thm:trop-ideal-polyhedral}
For every left ideal \(I\subseteq D\), the set
\(\operatorname{Trop}_D(I)\) is the support of a finite
\(\Gamma\)-rational polyhedral complex.
\end{theorem}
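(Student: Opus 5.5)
The plan is to reduce to a finite Gröbner-fan type statement on the strict region \(\Omega^\circ\) and then pass to the closure. The text calls this ``the finite tropical-basis theorem on the strict region'', proved in \cite{HsiaoTropD}. I would take it as the input, in the following form. There is a finite \(\Gamma\)-rational polyhedral complex \(\mathcal G\) whose support contains \(\Omega^\circ\), and whose relatively open cells meet \(\Omega^\circ\) in sets on which \(\operatorname{in}_w(I)\) is constant, up to the obvious identification. This is a Gröbner complex for \(I\) relative to the valuation. Equivalently, there is a finite tropical basis \(P_1,\dots,P_s\in I\) such that, for \(w\in\Omega^\circ\),
\[
w\in\operatorname{Trop}_D^\circ(I)\iff\sigma_w(P_j)\text{ is not a monomial for every }j .
\]

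\emph{Step 1 (interior locus is a finite union of relatively open \(\Gamma\)-rational pieces).} For a single \(P=\sum c_{\alpha,\beta}x^\alpha\partial^\beta\), the function \(w\mapsto\nu_w(P)\) is the minimum of finitely many affine functions \(\operatorname{val}(c_{\alpha,\beta})+\langle u,\alpha\rangle+\langle v,\beta\rangle\). These have integer slopes and constants in \(\Gamma\). The condition that \(\sigma_w(P)\) is not a monomial says the minimum is attained at least twice. This is the support of the codimension-\(\geq1\) skeleton of the \(\Gamma\)-rational regular subdivision induced by \(\nu_\bullet(P)\). Intersecting over \(j\) and with \(\Omega^\circ\), I take the common refinement of the finitely many induced subdivisions of \(\mathbb R^{2n}\). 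I further refine it by the hyperplanes \(u_i+v_i=0\), which are \(\Gamma\)-rational since \(0\in\Gamma\). The result is a finite \(\Gamma\)-rational complex \(\mathcal C\) covering \(\Omega\). Each relatively open cell of \(\mathcal C\) lies either entirely in \(\Omega^\circ\) or entirely in \(\partial\Omega\). On each interior cell the set of minimizing exponents of every \(P_j\) is constant. Hence \(\operatorname{Trop}_D^\circ(I)\) is the union of the relative interiors of a subfamily \(\mathcal T\) of cells of \(\mathcal C\) lying in \(\Omega^\circ\).

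\emph{Step 2 (closure).} Since \(\mathcal C\) is finite and each cell is closed, the closure in \(\Omega\) of \(\bigcup_{\tau\in\mathcal T}\operatorname{relint}\tau\) is \(\bigcup_{\tau\in\mathcal T}\tau\). The closure of the relative interior of a polyhedron is the polyhedron itself, and each \(\tau\subseteq\Omega\). Take the subcomplex of \(\mathcal C\) consisting of all cells of members of \(\mathcal T\) together with their faces. It is a finite \(\Gamma\)-rational polyhedral complex with support \(\operatorname{Trop}_D(I)\). Faces of \(\Gamma\)-rational polyhedra are \(\Gamma\)-rational, being cut out by additional equalities from the same inequalities.

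\emph{Main obstacle.} The real content is the finite tropical basis on \(\Omega^\circ\). Its proof requires two things. First, a Gröbner-fan finiteness: only finitely many initial ideals \(\operatorname{in}_w(I)\) occur for \(w\in\Omega^\circ\), and each equivalence class is a relatively open \(\Gamma\)-rational polyhedron. Second, the commutative theorem that a finite universal Gröbner basis can be enlarged to a tropical basis. This holds because \(\operatorname{gr}_w(D)\) is a commutative polynomial ring over \(\operatorname{gr}(K)\) on \(\Omega^\circ\). Finiteness uses a homogenization in an extra variable together with Dickson's lemma and the fact that \(\nu_w\) is additive. I would cite this from \cite{HsiaoTropD}. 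The point to check carefully is that the cells produced are genuinely \(\Gamma\)-rational even when \(\Gamma\) is not discrete. The affine functions above have integer slopes and constants in \(\Gamma\), and the vertices of the complex need not be \(\Gamma\)-points, so this should be fine. Away from that, unboundedness of the cells causes no problem, since only a finite complex of possibly unbounded polyhedra is claimed.
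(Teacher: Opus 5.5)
Your proposal is correct and follows essentially the same route as the paper. You take the finite tropical basis \(P_1,\ldots,P_s\) on \(\Omega^\circ\) from \cite{HsiaoTropD} and refine the \(\Gamma\)-rational subdivisions cut out by the affine functions \(\operatorname{val}(c_m)+\langle w,m\rangle\) together with the hyperplanes \(u_i+v_i=0\). You then take the subcomplex generated by the closed cells whose relative interiors make up \(\operatorname{Trop}_D^\circ(I)\), which is exactly what the paper does.

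The paper also checks something you take as given. The tropical-basis theorem of \cite{HsiaoTropD} is stated in characteristic zero, with the normalization \(1\in\Gamma\) and the opposite sign convention. The paper argues that it still holds for an arbitrary complete nonarchimedean \(K\), including the trivially valued case.
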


\begin{proof}
By the argument of \cite[Theorem~5.2]{HsiaoTropD}, with the sign convention
for weights reversed, there are finitely many \(P_1,\ldots,P_s\in I\) such
that
\begin{equation}\label{eq:tropical-basis}
\operatorname{Trop}_D^\circ(I)
=\bigcap_{j=1}^s\operatorname{Trop}_D^\circ(P_j).
\end{equation}
We use this finite tropical-basis statement over the present ground field.
Although \cite{HsiaoTropD} is formulated in characteristic zero, the ordered-
monomial multiplication formula has principal coefficient~\(1\) and integral
correction coefficients, while the Hilbert-function, monomial-ideal, and
weighted linear-algebra arguments used in the proof are characteristic-free;
see \cite[Proposition~3.1 and Lemmas~3.8, 4.2, and~4.3]{HsiaoTropD}. The
proof of \cite[Theorem~5.2]{HsiaoTropD} already removes the auxiliary splitting
hypothesis by passing to a valued field extension admitting a splitting and
descending the resulting tropical basis via \cite[Lemma~5.1]{HsiaoTropD}.
For a nontrivial valuation, the normalization \(1\in\Gamma\) used there is
immaterial after rescaling the valuation and the weight space by a positive
element of \(\Gamma\); for the trivial valuation, the same argument reduces to
the usual unvalued Gr\"obner setting. Hence \eqref{eq:tropical-basis}
remains valid under the present hypotheses.

Write
\[
P_j=\sum_m c_m x^\alpha\partial^\beta,
\qquad m=(\alpha,\beta).
\]
Membership in \(\operatorname{Trop}_D^\circ(P_j)\) means that the minimum of
the finitely many affine functions
\[
w\longmapsto \operatorname{val}(c_m)+\langle w,m\rangle
\]
is attained at least twice. The equalities between pairs of these affine
functions form a finite \(\Gamma\)-rational hyperplane arrangement. Taking a
common refinement for \(P_1,\ldots,P_s\) and with the faces of \(\Omega\), we
obtain a finite \(\Gamma\)-rational polyhedral complex \(\Sigma\) in
\(\Omega\) such that \(\operatorname{Trop}_D^\circ(I)\) is a union of
relative interiors of cells of \(\Sigma\) contained in \(\Omega^\circ\).
The subcomplex generated by the closures of these cells therefore has support
\(\overline{\operatorname{Trop}_D^\circ(I)}^{\,\Omega}\), which is
\(\operatorname{Trop}_D(I)\) by definition.
\end{proof}

\section{Polyhedral completions and spectra}
\label{sec:polyhedral-completions}

We now pass from the weighted Gauss norms of Section~\ref{sec:seminorms} to
their completions. No rationality or integrality hypothesis is imposed in
this section.

\subsection{Weighted and compact completions}

For \(w\in\Omega\), let \(D_w\) be the completion of \(D\) with respect
to \(|\cdot|_w\). If \(\Delta\subseteq\Omega\) is a nonempty compact
polyhedron, set
\[
|P|_\Delta=\sup_{w\in\Delta}|P|_w
=\max_{q\in\operatorname{Vert}(\Delta)}|P|_q,
\]
where, for \(P\neq0\), the second equality follows because
\(w\mapsto\log|P|_w\) is convex and piecewise affine. Let \(D_\Delta\)
be the completion of \(D\) for this norm.

\begin{proposition}\label{prop:series-descriptions}
For \(w=(u,v)\in\Omega\),
\[
D_w=\left\{\sum_{\alpha,\beta}c_{\alpha,\beta}x^\alpha\partial^\beta:
|c_{\alpha,\beta}|e^{-\langle u,\alpha\rangle-\langle v,\beta\rangle}
\longrightarrow0\text{ as }|\alpha|+|\beta|\to\infty\right\}.
\]
The norm of such a series is the supremum of its weighted coefficients.
If \(\Delta\subseteq\Omega\) is nonempty and compact, then
\[
D_\Delta=\bigcap_{q\in\operatorname{Vert}(\Delta)}D_q
\]
inside the space of formal ordered-monomial series, and
\(|f|_\Delta=\max_{q\in\operatorname{Vert}(\Delta)}|f|_q\). In particular,
\(D_w\) and \(D_\Delta\) are complete nonarchimedean \(K\)-algebras.
\end{proposition}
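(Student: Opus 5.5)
The plan is to identify $D_w$ with a space of weighted null sequences, the standard $c_0$-description of a completion for a norm that is a weighted supremum over an algebraic basis, and then deduce the statement for $D_\Delta$ from the vertex formula. Fix $w=(u,v)\in\Omega$ and put $\rho_{\alpha,\beta}=e^{-\langle u,\alpha\rangle-\langle v,\beta\rangle}$. Let $E_w$ be the set of formal series $\sum c_{\alpha,\beta}x^\alpha\partial^\beta$ with $|c_{\alpha,\beta}|\rho_{\alpha,\beta}\to0$, normed by $\|f\|_w=\sup|c_{\alpha,\beta}|\rho_{\alpha,\beta}$. This norm extends $|\cdot|_w$ on $D$, and the first goal is to show that $E_w$ is complete and that $D$ is dense in it. Density is immediate: the truncations $f_N=\sum_{|\alpha|+|\beta|\le N}c_{\alpha,\beta}x^\alpha\partial^\beta$ satisfy $\|f-f_N\|_w=\sup_{|\alpha|+|\beta|>N}|c_{\alpha,\beta}|\rho_{\alpha,\beta}\to0$. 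Completeness is the usual $c_0$-argument:
\begin{enumerate}[label=\textup{(\arabic*)}]
\item a Cauchy sequence converges coefficientwise, since each coefficient functional is bounded by $\rho_{\alpha,\beta}^{-1}\|\cdot\|_w$;
\item the limit series again has weighted coefficients tending to zero, by an $\varepsilon/2$ argument;
\item the sequence converges to this limit in norm.
\end{enumerate}
Hence $D_w\simeq E_w$ as normed $K$-spaces.

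Next I make $E_w$ an algebra. Multiplication on $D$ is continuous for $|\cdot|_w$; by Proposition~\ref{prop:weighted-gauss-multiplicative} it is even multiplicative. It therefore extends uniquely to the completion, with $\|fg\|_w=\|f\|_w\|g\|_w$ by continuity. This product is the one obtained as the limit of products of truncations, so the series description is compatible with the algebra structure, and $D_w$ is a complete nonarchimedean $K$-algebra.

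For compact $\Delta$, I first note that $|\cdot|_\Delta=\max_q|\cdot|_q$ dominates each $|\cdot|_q$. Each inclusion $D\to D_q$ therefore extends to a contraction $D_\Delta\to D_q$, and each of these maps is compatible with coefficient extraction, since coefficient functionals are continuous for every $|\cdot|_q$. This yields a map $D_\Delta\to\bigcap_q D_q$ inside the formal series space. It is injective, because a series with all coefficients zero is zero; more precisely, I would show directly that it is isometric onto its image for the norm $\max_q\|\cdot\|_q$.

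Surjectivity is the main point. Given $f\in\bigcap_q D_q$, the truncations $f_N$ satisfy $\max_q\|f-f_N\|_q\to0$, because there are finitely many vertices. Hence $(f_N)$ is Cauchy for $|\cdot|_\Delta$ and converges in $D_\Delta$ to an element whose coefficients are those of $f$. Consequently $D_\Delta=\bigcap_q D_q$ with $|f|_\Delta=\max_q|f|_q$. The algebra structure is inherited as before, with submultiplicativity of $|\cdot|_\Delta$ following from multiplicativity of each $|\cdot|_q$.

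The only subtle step I anticipate is the well-definedness of coefficient extraction on $D_\Delta$, that is, ensuring that the abstract completion embeds into formal series. This is handled by the bound $|c_{\alpha,\beta}(P)|\le\rho^{(q)}_{\alpha,\beta}{}^{-1}|P|_q\le\rho^{(q)}_{\alpha,\beta}{}^{-1}|P|_\Delta$, valid for any single vertex $q$, which makes each coefficient functional bounded on $D_\Delta$.
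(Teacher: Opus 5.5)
Your proposal is correct and follows essentially the same route as the paper: you identify $D_w$ with weighted null series via coefficientwise limits of Cauchy sequences and density of finite truncations, then obtain $D_\Delta=\bigcap_q D_q$ from the vertex formula $|\cdot|_\Delta=\max_q|\cdot|_q$ and the fact that truncations converge simultaneously in the finitely many vertex norms. Your explicit checks that coefficient extraction is bounded on $D_\Delta$ and that $D_\Delta\to\bigcap_q D_q$ is isometric spell out a step the paper leaves implicit.
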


\begin{proof}
Put \(\rho_{\alpha,\beta}=e^{-\langle u,\alpha\rangle-\langle v,\beta\rangle}\).
If \(P_j=\sum c_{\alpha,\beta}^{(j)}x^\alpha\partial^\beta\) is Cauchy
for \(|\cdot|_w\), then
\[
|c_{\alpha,\beta}^{(j)}-c_{\alpha,\beta}^{(k)}|\rho_{\alpha,\beta}
\leq|P_j-P_k|_w,
\]
so each coefficient sequence is Cauchy and hence converges in \(K\), since
\(K\) is complete. Since every \(P_j\) has finite support, the Cauchy
condition also implies that the limiting weighted coefficients
\(|c_{\alpha,\beta}|\rho_{\alpha,\beta}\) tend to zero. Conversely, any
formal series with this property is the \(|\cdot|_w\)-limit of its finite
truncations. This proves the description of \(D_w\) and its norm.

If \(q_1,\ldots,q_s\) are the vertices of \(\Delta\), then
\(|P|_\Delta=\max_i|P|_{q_i}\). Hence a sequence is Cauchy for
\(|\cdot|_\Delta\) if and only if it is Cauchy for each vertex norm.
The preceding description, using the same finite truncations at all
vertices, therefore implies
\[
D_\Delta=\bigcap_{i=1}^sD_{q_i}
\]
inside the space of formal ordered-monomial series, with the stated norm.
Multiplication extends continuously from \(D\).
\end{proof}

If \(\Delta'\subseteq\Delta\) are nonempty compact polyhedra, then
\(|P|_{\Delta'}\leq|P|_\Delta\), so the identity on \(D\) extends to a
continuous injective homomorphism \(D_\Delta\to D_{\Delta'}\). Under the
series descriptions, this is the natural inclusion. If \(f\in D_\Delta\)
and \(w\in\Delta\), we write \(|f|_w\) for the norm of its image in
\(D_w\).

\subsection{Bounded spectra}

For a polyhedron \(\Delta\subseteq\Omega\), write
\(U_\Delta=\operatorname{trop}^{-1}(\Delta)\).

\begin{proposition}\label{prop:weighted-spectra-cover}
For \(w\in\Omega\), restriction to \(D\) induces a homeomorphism
\[
\mathcal M(D_w)\xrightarrow{\;\sim\;}
\{\eta\in\mathcal S(D):\eta\leq|\cdot|_w\}
=\{\eta\in\mathcal S(D):\operatorname{trop}(\eta)\geq w\},
\]
where the last inequality is coordinatewise. In particular, the displayed
subset is compact, and
\[
\mathcal S(D)=\bigcup_{w\in\Omega}\mathcal M(D_w),
\]
where each \(\mathcal M(D_w)\) is viewed in \(\mathcal S(D)\) through this
homeomorphism.
\end{proposition}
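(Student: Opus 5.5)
The proof has four steps: build the restriction map, identify its image as the set of seminorms dominated by \(|\cdot|_w\), show the map is a homeomorphism, and then deduce the covering statement.

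\emph{Restriction map.} Since \(D\) is dense in \(D_w\) by construction, restriction gives a map \(\mathcal M(D_w)\to\mathcal S(D)\). By the contractivity remark in Section~\ref{subsec:banach-frechet-background}, every \(\eta\in\mathcal M(D_w)\) satisfies \(\eta\leq|\cdot|_w\) on \(D_w\), hence on \(D\).

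\emph{Image.} Conversely, let \(\eta\in\mathcal S(D)\) satisfy \(\eta\leq|\cdot|_w\). Then \(\eta\) is \(1\)-Lipschitz for \(|\cdot|_w\), because \(|\eta(P)-\eta(Q)|\leq\eta(P-Q)\leq|P-Q|_w\). It therefore extends uniquely by continuity to \(D_w\). The extension is multiplicative, nonarchimedean, extends \(|\cdot|\) on \(K\), and is bounded, since these identities and inequalities pass to limits. This shows that restriction is a bijection onto \(\{\eta:\eta\leq|\cdot|_w\}\).

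\emph{Second description of the image.} Evaluating \(\eta\leq|\cdot|_w\) on the generators gives \(\eta(x_i)\leq e^{-u_i}\) and \(\eta(\partial_i)\leq e^{-v_i}\), that is, \(\operatorname{trop}(\eta)\geq w\) coordinatewise. For the converse, suppose \(w'=\operatorname{trop}(\eta)\geq w\). Then \(w'\in\Omega\) automatically, and Proposition~\ref{prop:gauss-section} gives \(\eta\leq|\cdot|_{w'}\). Moreover \(|P|_{w'}\leq|P|_w\) termwise, since \(e^{-\langle u',\alpha\rangle-\langle v',\beta\rangle}\leq e^{-\langle u,\alpha\rangle-\langle v,\beta\rangle}\) for \(\alpha,\beta\in\mathbb N^n\). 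Hence \(\eta\leq|\cdot|_w\).

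\emph{Homeomorphism.} This is the main point requiring care. Restriction is continuous for the pointwise topologies, because evaluation at \(P\in D\) on \(\mathcal S(D)\) pulls back to evaluation at \(P\) on \(\mathcal M(D_w)\). The space \(\mathcal M(D_w)\) is compact, as noted in Section~\ref{subsec:banach-frechet-background}. The target is Hausdorff, being a subspace of \(\mathbb R_{\geq0}^D\). A continuous bijection from a compact space onto a Hausdorff space is a homeomorphism, and its image is compact. As an alternative check on the inverse map, one can use a uniform approximation argument: for \(f\in D_w\) and \(P\in D\) with \(|f-P|_w<\epsilon\), one has \(|\eta(f)-\eta(P)|<\epsilon\) uniformly in \(\eta\). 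This also gives continuity of the inverse directly.

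\emph{Covering.} For the last assertion, any \(\eta\in\mathcal S(D)\) has \(w=\operatorname{trop}(\eta)\in\Omega\), and \(\operatorname{trop}(\eta)\geq w\) holds trivially. So \(\eta\) lies in the image of \(\mathcal M(D_w)\), which gives \(\mathcal S(D)=\bigcup_{w\in\Omega}\mathcal M(D_w)\).
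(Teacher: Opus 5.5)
Your proof is correct and follows essentially the same route as the paper. Contractivity gives \(\eta\leq|\cdot|_w\), density with the Lipschitz bound gives the unique extension, compact-to-Hausdorff gives the homeomorphism, and the covering follows from \(\eta\leq s(\operatorname{trop}(\eta))\). The only cosmetic difference is in deducing \(\eta\leq|\cdot|_w\) from \(\operatorname{trop}(\eta)\geq w\): you pass through Proposition~\ref{prop:gauss-section} at \(w'=\operatorname{trop}(\eta)\) and use the monotonicity \(|\cdot|_{w'}\leq|\cdot|_w\), whereas the paper bounds \(\eta(P)\) directly on ordered monomials, which is the same estimate.
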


\begin{proof}
By Section~\ref{subsec:banach-frechet-background}, every element of
\(\mathcal M(D_w)\) is contractive, so its restriction \(\eta\) to
\(D\) satisfies \(\eta\leq|\cdot|_w\).
Conversely, any \(\eta\in\mathcal S(D)\) with this property extends
uniquely to \(D_w\), since \(D\) is dense in \(D_w\); multiplicativity of
the extension follows by continuity. Thus restriction is a continuous
bijection onto the first displayed subset. Since \(\mathcal M(D_w)\) is
compact by Section~\ref{subsec:banach-frechet-background} and
\(\mathcal S(D)\) is Hausdorff, it is a homeomorphism onto its
image.

Write \(w=(u,v)\). The inequality \(\operatorname{trop}(\eta)\geq w\)
is equivalent to \(\eta(x_i)\leq e^{-u_i}\) and
\(\eta(\partial_i)\leq e^{-v_i}\) for all \(i\). Hence, for
\(P=\sum c_{\alpha,\beta}x^\alpha\partial^\beta\),
\[
\eta(P)\leq\max_{\alpha,\beta}|c_{\alpha,\beta}|
\eta(x)^\alpha\eta(\partial)^\beta\leq|P|_w.
\]
The converse follows by evaluating \(\eta\leq|\cdot|_w\) on the
generators. This proves the second equality.

Finally, let \(\eta\in\mathcal S(D)\) and set
\(w=\operatorname{trop}(\eta)\). Proposition~\ref{prop:gauss-section}
implies \(\eta\leq s(w)=|\cdot|_w\), so \(\eta\) belongs to the image of
\(\mathcal M(D_w)\). Since \(\eta\) was arbitrary, the union of these
images is all of \(\mathcal S(D)\).
\end{proof}

For a nonempty compact polyhedron \(\Delta\subseteq\Omega\), define
\begin{equation}\label{eq:upper-saturation}
\Delta^+=\Delta+\mathbb R_{\geq0}^{2n},
\qquad\Delta^\uparrow=\Delta^+\cap\Omega.
\end{equation}
We call \(\Delta^+\) the upper hull of \(\Delta\), and
\(\Delta^\uparrow\) its upper saturation in \(\Omega\).
The latter is again a compact polyhedron. Indeed, each coordinate on
\(\Delta^\uparrow\) is bounded below by its minimum on \(\Delta\), while
the inequalities \(u_i+v_i\leq0\) then bound the coordinates above.

\begin{theorem}\label{thm:compact-spectrum}
For every nonempty compact polyhedron \(\Delta\subseteq\Omega\),
restriction to \(D\) induces a homeomorphism
\[
\mathcal M(D_\Delta)\xrightarrow{\;\sim\;}U_{\Delta^\uparrow}.
\]
\end{theorem}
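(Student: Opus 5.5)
The plan follows the proof of Proposition~\ref{prop:weighted-spectra-cover}. First, restriction to \(D\) identifies \(\mathcal M(D_\Delta)\) with the set of \(\eta\in\mathcal S(D)\) satisfying \(\eta\leq|\cdot|_\Delta\) on \(D\). Every element of \(\mathcal M(D_\Delta)\) is contractive by Section~\ref{subsec:banach-frechet-background}. Conversely, a seminorm \(\eta\) on \(D\) dominated by \(|\cdot|_\Delta\) is uniformly continuous, so it extends uniquely to the completion \(D_\Delta\). Multiplicativity and the ultrametric inequality of the extension then follow by continuity. The restriction map is a continuous bijection from a compact space onto a subset of the Hausdorff space \(\mathcal S(D)\), hence a homeomorphism onto its image. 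It therefore remains to prove the set-theoretic equality
\[
\{\eta\in\mathcal S(D):\eta\leq|\cdot|_\Delta\}=\operatorname{trop}^{-1}(\Delta^\uparrow).
\]

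For the inclusion \(\supseteq\), let \(\operatorname{trop}(\eta)\in\Delta^\uparrow\). Then \(\operatorname{trop}(\eta)\geq q\) coordinatewise for some \(q\in\Delta\). By Proposition~\ref{prop:weighted-spectra-cover} this gives \(\eta\leq|\cdot|_q\leq|\cdot|_\Delta\).

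For the inclusion \(\subseteq\), write \(w=\operatorname{trop}(\eta)\in\Omega\). The task is to show \(w\in\Delta^+\). The hypothesis gives, for every \(P\in D\),
\[
\eta(P)\leq\max_{q\in\operatorname{Vert}(\Delta)}|P|_q .
\]
I will test this on monomials. Multiplicativity gives \(\eta(x^\alpha\partial^\beta)=e^{-\langle w,m\rangle}\) with \(m=(\alpha,\beta)\in\mathbb N^{2n}\), while \(|x^\alpha\partial^\beta|_\Delta=e^{-\min_{q\in\Delta}\langle q,m\rangle}\). Hence
\[
\langle w,m\rangle\geq\min_{q\in\Delta}\langle q,m\rangle\qquad\text{for all }m\in\mathbb N^{2n}.
\]
Both sides are positively homogeneous in \(m\), and the right side is continuous (it is a minimum over finitely many vertices). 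By scaling and density the inequality extends to all \(m\in\mathbb R_{\geq0}^{2n}\).

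The main step is to convert this into membership in \(\Delta^+\). Since \(\Delta^+\) is closed and convex, it equals the intersection of its supporting half-spaces \(\{w':\langle w',a\rangle\geq h(a)\}\), where \(h(a)=\inf_{w'\in\Delta^+}\langle w',a\rangle\). The recession cone of \(\Delta^+\) is \(\mathbb R_{\geq0}^{2n}\). So for \(a\) outside \(\mathbb R_{\geq0}^{2n}\), the infimum \(h(a)\) is \(-\infty\) and the constraint is vacuous. For \(a\in\mathbb R_{\geq0}^{2n}\) one has \(h(a)=\min_{q\in\Delta}\langle q,a\rangle\). The inequality from the previous paragraph is exactly the constraint for these \(a\), so \(w\in\Delta^+\). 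Combined with \(w\in\Omega\), this gives \(w\in\Delta^\uparrow\).

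An alternative route uses Farkas-type duality applied to the finite vertex set. I do not expect any step to be a serious obstacle. The only points needing care are the extension-by-continuity argument and the passage from integer vectors \(m\) to real ones, and the latter is handled by the homogeneity and continuity just noted.
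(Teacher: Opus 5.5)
Your proposal is correct and follows essentially the same route as the paper: contractivity on ordered monomials gives \(\langle w,m\rangle\geq\min_{q\in\Delta}\langle q,m\rangle\) for \(m\in\mathbb R_{\geq0}^{2n}\), the recession cone \(\mathbb R_{\geq0}^{2n}\) of \(\Delta^+\) shows that only these functionals matter, and compactness together with the Hausdorff property upgrades the continuous bijection to a homeomorphism. The differences are cosmetic: the paper argues by contradiction with a separating functional rather than via the half-space description of \(\Delta^+\), and it proves the reverse inclusion by the direct monomial comparison \(|x^\alpha\partial^\beta|_z\leq|x^\alpha\partial^\beta|_w\) instead of citing Proposition~\ref{prop:weighted-spectra-cover}.
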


\begin{proof}
Let \(\eta\in\mathcal M(D_\Delta)\), and put
\(z=\operatorname{trop}(\eta|_D)\). For
\(m=(\alpha,\beta)\in\mathbb N^{2n}\), contractivity applied to the
ordered monomial \(x^\alpha\partial^\beta\) yields
\[
e^{-\langle z,m\rangle}\leq|x^\alpha\partial^\beta|_\Delta
=\exp\!\left(-\min_{w\in\Delta}\langle w,m\rangle\right).
\]
Hence
\begin{equation}\label{eq:upper-support-inequality}
\langle z,m\rangle\geq\min_{w\in\Delta}\langle w,m\rangle.
\end{equation}
If \(m\in\mathbb Q_{\geq0}^{2n}\), choose a positive integer \(N\) with
\(Nm\in\mathbb N^{2n}\), apply \eqref{eq:upper-support-inequality} to
\(Nm\), and divide by \(N\). By continuity in \(m\),
\eqref{eq:upper-support-inequality} therefore holds for every
\(m\in\mathbb R_{\geq0}^{2n}\).

Suppose \(z\notin\Delta^+\). Since
\(\Delta^+\) is a closed convex polyhedron, there exists \(m\in\mathbb R^{2n}\) such that
\[
\langle z,m\rangle<\inf_{y\in \Delta^+}\langle y,m\rangle.
\]
Every coordinate of \(m\) is nonnegative: if \(m_j<0\), then
\(y+te_j\in \Delta^+\) for every \(y\in \Delta^+\) and \(t\geq0\), forcing the
infimum on the right to be \(-\infty\). Thus \(m\in\mathbb R_{\geq0}^{2n}\).
Moreover,
\[
\inf_{y\in \Delta^+}\langle y,m\rangle
=\min_{w\in\Delta}\langle w,m\rangle,
\]
contradicting \eqref{eq:upper-support-inequality}. Hence \(z\in \Delta^+\), and
since \(z\in\Omega\), we obtain \(z\in\Delta^\uparrow\).

Conversely, let \(\eta\in\mathcal S(D)\) and suppose
\(z=\operatorname{trop}(\eta)\in\Delta^\uparrow\). Write \(z=w+r\) with
\(w\in\Delta\) and \(r\in\mathbb R_{\geq0}^{2n}\).
Proposition~\ref{prop:gauss-section} implies \(\eta(P)\leq|P|_z\) for
\(P\in D\). For every ordered monomial,
\[
|x^\alpha\partial^\beta|_z\leq|x^\alpha\partial^\beta|_w
\leq|x^\alpha\partial^\beta|_\Delta,
\]
and therefore \(\eta(P)\leq|P|_\Delta\) for all \(P\in D\). Hence
\(\eta\) extends uniquely to an element of \(\mathcal M(D_\Delta)\).

By density of \(D\) in \(D_\Delta\), restriction is injective. By
Section~\ref{subsec:banach-frechet-background}, \(\mathcal M(D_\Delta)\)
is compact. The resulting continuous bijection onto the Hausdorff space
\(U_{\Delta^\uparrow}\) is therefore a homeomorphism.
\end{proof}

In the toric setting of Section~\ref{subsec:toric-background}, the
polytopal algebra has bounded spectrum
\(U_\Delta^{\mathrm{com}}=\operatorname{trop}^{-1}(\Delta)\).
The difference here comes from the monomial exponent sets: Laurent monomials
on the torus range over the full character lattice, whereas the ordered
monomials of \(D\) have exponent vectors in \(\mathbb N^{2n}\). Accordingly,
boundedness for \(D_\Delta\) tests only linear forms with nonnegative
coefficient vectors, which determine the upper hull \(\Delta^+\) and hence
the upper saturation \(\Delta^\uparrow=\Delta^+\cap\Omega\), rather than
\(\Delta\) itself. In particular, \(\mathcal M(D_w)\) contains the fibers
over all \(z\in\Omega\) with \(z\geq w\), not only the fiber over \(w\).

\begin{corollary}\label{cor:proper-norming}
The tropicalization map \(\operatorname{trop}:\mathcal S(D)\to\Omega\)
is proper. Moreover, for every nonempty compact polyhedron
\(\Delta\subseteq\Omega\) and every \(f\in D_\Delta\),
\begin{equation}\label{eq:compact-norming}
|f|_\Delta=\sup_{\eta\in U_\Delta}\eta(f)
=\sup_{w\in\Delta}s(w)(f)
=\max_{q\in\operatorname{Vert}(\Delta)}s(q)(f).
\end{equation}
\end{corollary}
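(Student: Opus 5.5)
For properness, I will show that the preimage of every compact set \(C\subseteq\Omega\) under \(\operatorname{trop}\) is compact. First I enclose \(C\) in a nonempty compact polyhedron \(\Delta\subseteq\Omega\); a box intersected with \(\Omega\), enlarged if needed, will do. Then \(\operatorname{trop}^{-1}(C)\) is a closed subset of \(U_{\Delta^\uparrow}\), since \(C\subseteq\Delta\subseteq\Delta^\uparrow\) and \(\operatorname{trop}\) is continuous. By Theorem~\ref{thm:compact-spectrum}, \(U_{\Delta^\uparrow}\) is homeomorphic to \(\mathcal M(D_\Delta)\), which is compact by Section~\ref{subsec:banach-frechet-background}. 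Hence \(\operatorname{trop}^{-1}(C)\) is compact. Since \(\mathcal S(D)\) is Hausdorff and \(\Omega\) is locally compact Hausdorff, this is the properness required.

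For the norming identities, fix \(f\in D_\Delta\). For \(\eta\in U_\Delta\), I use its unique bounded extension to \(D_\Delta\) supplied by Theorem~\ref{thm:compact-spectrum}; this is how \(\eta(f)\) is to be read. I will prove the chain
\[
\max_{q\in\operatorname{Vert}(\Delta)}s(q)(f)\leq\sup_{w\in\Delta}s(w)(f)\leq\sup_{\eta\in U_\Delta}\eta(f)\leq|f|_\Delta .
\]
The first inequality holds because the vertices lie in \(\Delta\). The second holds because \(s(\Delta)\subseteq U_\Delta\), by Proposition~\ref{prop:gauss-section}. For the third, every \(\eta\in U_\Delta\subseteq U_{\Delta^\uparrow}\) lies in \(\mathcal M(D_\Delta)\), and elements of \(\mathcal M(D_\Delta)\) are contractive, so \(\eta(f)\leq|f|_\Delta\). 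To close the chain, I apply Proposition~\ref{prop:series-descriptions}, which gives \(|f|_\Delta=\max_{q}|f|_q\). It then remains to identify \(|f|_q\) with \(s(q)(f)\), that is, with the value at \(f\) of the extension of the Gauss point \(s(q)\) to \(D_\Delta\).

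That identification is the one genuine step. The extension of \(s(q)\) to \(D_\Delta\) is obtained by continuity from \(D\), and \(|\cdot|_q\) on \(D_\Delta\) is the restriction of the continuous norm on \(D_q\) along the continuous inclusion \(D_\Delta\to D_q\). The two agree on the dense subring \(D\), and both are continuous for \(|\cdot|_\Delta\): the first because \(s(q)\leq|\cdot|_\Delta\), the second because \(|\cdot|_q\leq|\cdot|_\Delta\). They therefore coincide on all of \(D_\Delta\). With this, the chain closes and all four quantities are equal.

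The main point needing care is that the supremum is taken over \(U_\Delta\) and not over \(U_{\Delta^\uparrow}\). This is harmless: the Gauss points over the vertices already lie in \(U_\Delta\) and attain \(|f|_\Delta\), while contractivity gives the upper bound on the larger set \(U_{\Delta^\uparrow}\) as well.
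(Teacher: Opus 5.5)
Your proof is correct and follows essentially the paper's argument: properness comes from \(\operatorname{trop}^{-1}(C)\) being closed in the compact space \(U_{\Delta^\uparrow}\simeq\mathcal M(D_\Delta)\), and the norming identities come from the density-and-continuity identification \(s(q)(f)=|f|_q\) combined with Proposition~\ref{prop:series-descriptions}. The one minor difference is how you bound \(\sup_{\eta\in U_\Delta}\eta(f)\): you bound it directly by \(|f|_\Delta\) using contractivity of points of \(\mathcal M(D_\Delta)\), whereas the paper bounds it by \(\sup_{w\in\Delta}s(w)(f)\) using fiberwise maximality of Gauss points (Proposition~\ref{prop:gauss-section}) extended by density; both routes are valid.
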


\begin{proof}
Let \(E\subseteq\Omega\) be compact. Choose \(t>0\) such that
\(E\subseteq\Delta_t:=\Omega\cap[-t,t]^{2n}\). Then \(\Delta_t\) is a compact
polyhedron, and Theorem~\ref{thm:compact-spectrum} identifies
\(U_{(\Delta_t)^\uparrow}\) with the compact space \(\mathcal M(D_{\Delta_t})\).
Since \(\operatorname{trop}^{-1}(E)\) is closed in \(U_{(\Delta_t)^\uparrow}\),
it is compact. Thus \(\operatorname{trop}\) is proper.

Now let \(f\in D_\Delta\). For \(w\in\Delta\),
Theorem~\ref{thm:compact-spectrum} identifies \(s(w)\) with a bounded
seminorm on \(D_\Delta\). Since its restriction to \(D\) is
\(|\cdot|_w\), continuity shows that \(s(w)(f)=|f|_w\).
Proposition~\ref{prop:series-descriptions} therefore implies
\[
\sup_{w\in\Delta}s(w)(f)
=\max_{q\in\operatorname{Vert}(\Delta)}s(q)(f)=|f|_\Delta.
\]

If \(\eta\in U_\Delta\) and \(w=\operatorname{trop}(\eta)\), then both
\(\eta\) and \(s(w)\) extend to bounded seminorms on \(D_\Delta\).
Proposition~\ref{prop:gauss-section} implies \(\eta(P)\leq s(w)(P)\) for
\(P\in D\), and by density of \(D\) in \(D_\Delta\),
\(\eta(f)\leq s(w)(f)\). Hence
\[
\sup_{\eta\in U_\Delta}\eta(f)\leq\sup_{w\in\Delta}s(w)(f).
\]
The reverse inequality follows from \(s(w)\in U_\Delta\) for
\(w\in\Delta\). This proves the asserted equalities.
\end{proof}

\subsection{Fr\'echet completions on arbitrary polyhedra}
\label{subsec:frechet-completions}

Let \(\Delta\subseteq\Omega\) be a nonempty polyhedron, and choose a
nested cofinal compact-polyhedral exhaustion
\[
C_1\subseteq C_2\subseteq\cdots\subseteq\Delta,
\qquad\bigcup_j C_j=\Delta,
\]
as in Section~\ref{subsec:polyhedral-background}. The inclusions \(C_j\subseteq C_{j+1}\) induce
continuous injective homomorphisms \(D_{C_{j+1}}\to D_{C_j}\), and hence a
projective system of Banach \(K\)-algebras. We define
\begin{equation}\label{eq:frechet-algebra-definition}
D_\Delta^{\mathrm{Fr}}=\varprojlim_jD_{C_j},
\end{equation}
endowed with the Fr\'echet topology defined by the norms
\(|\cdot|_{C_j}\), as in Section~\ref{subsec:banach-frechet-background}.

Using the common ordered-monomial realization from
Proposition~\ref{prop:series-descriptions}, the projective limit identifies
with \(\bigcap_jD_{C_j}\). Equivalently, its elements are the formal
ordered-monomial series whose weighted coefficients tend to zero uniformly
on every compact polyhedral subset of \(\Delta\).

\begin{proposition}\label{prop:frechet-completion}
The Fr\'echet algebra \(D_\Delta^{\mathrm{Fr}}\) is complete and, as a
topological \(K\)-algebra, independent of the chosen cofinal compact-polyhedral
exhaustion. If \(\Delta\) is compact, then
\(D_\Delta^{\mathrm{Fr}}=D_\Delta\). If \(\Delta'\subseteq\Delta\) are
nonempty polyhedra, there is a natural continuous injective homomorphism
\[
D_\Delta^{\mathrm{Fr}}\longrightarrow D_{\Delta'}^{\mathrm{Fr}}.
\]
\end{proposition}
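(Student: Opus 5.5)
We treat \(D_\Delta^{\mathrm{Fr}}\) throughout as the subspace \(\bigcap_j D_{C_j}\) of the space of formal ordered-monomial series, using Proposition~\ref{prop:series-descriptions}. The transition maps \(D_{C_{j+1}}\to D_{C_j}\) are the natural inclusions in this model.

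\emph{Completeness.} Let \((f_k)\) be Cauchy for every norm \(|\cdot|_{C_j}\). Since each \(D_{C_j}\) is Banach, \(f_k\) has a limit \(g_j\in D_{C_j}\) for each \(j\). Each coefficient functional is continuous for every \(|\cdot|_{C_j}\), because \(|c_{\alpha,\beta}|\rho_{\alpha,\beta}\leq|f|_{C_j}\). So all the \(g_j\) have the same coefficients, namely the coefficientwise limits of the \(f_k\), and hence coincide as formal series. The resulting common series lies in \(\bigcap_jD_{C_j}\) and is the Fréchet limit. Equivalently, a countable projective limit of complete spaces is complete.

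\emph{Independence of the exhaustion.} Let \((C_j)\) and \((C'_k)\) be two cofinal exhaustions. Each \(C'_k\) is a compact polyhedron in \(\Delta\), so cofinality gives \(C'_k\subseteq C_{j(k)}\) for some \(j(k)\). Hence \(|f|_{C'_k}\leq|f|_{C_{j(k)}}\), and symmetrically with the roles exchanged. Two consequences follow. First, the intersections \(\bigcap_jD_{C_j}\) and \(\bigcap_kD_{C'_k}\) agree inside formal series: if \(f\) lies in \(D_{C_{j(k)}}\), then \(f\) lies in \(D_{C'_k}\), because the truncations of \(f\) converge for the larger norm. Second, each family of seminorms is dominated by the other, so the two Fréchet topologies coincide. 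Multiplication is the same formal-series product in both descriptions, so the identification is one of topological \(K\)-algebras.

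\emph{The compact case and restriction maps.} If \(\Delta\) is compact, then \(\Delta\) itself is a compact polyhedron in \(\Delta\). Cofinality gives \(\Delta\subseteq C_j\) for some \(j\), and hence \(C_j=\Delta\) for all large \(j\). Alternatively, take the constant exhaustion \(C_j=\Delta\), which is permitted by the independence result. Either way, the projective limit is \(D_\Delta\) with its norm topology.

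For \(\Delta'\subseteq\Delta\), choose exhaustions \((C'_k)\) of \(\Delta'\) and \((C_j)\) of \(\Delta\). Each \(C'_k\) is a compact polyhedron in \(\Delta\), so \(C'_k\subseteq C_{j(k)}\). The continuous inclusions \(D_{C_{j(k)}}\to D_{C'_k}\) from the end of the previous subsection are compatible with the transition maps. They therefore induce a homomorphism \(D_\Delta^{\mathrm{Fr}}\to D_{\Delta'}^{\mathrm{Fr}}\), which is the inclusion \(\bigcap_jD_{C_j}\subseteq\bigcap_kD_{C'_k}\) of formal series. It is injective and continuous by the bound \(|f|_{C'_k}\leq|f|_{C_{j(k)}}\). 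Naturality means independence from the choice of \(j(k)\) and compatibility with composition, and both hold because every map in question is the identity on formal series.

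\emph{Main obstacle.} The only delicate point is that the identification with \(\bigcap_jD_{C_j}\) is legitimate. This requires that the maps \(D_{C}\to D_{C'}\) be injective and given by the identity on coefficients. That is exactly the statement that coefficient functionals are continuous, together with the fact, recorded after Proposition~\ref{prop:series-descriptions}, that the maps are natural inclusions. Once this is in place, every assertion above reduces to comparing the norms \(|\cdot|_C\) under inclusion of compact polyhedra.
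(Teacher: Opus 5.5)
Your proposal is correct and follows essentially the same route as the paper: completeness from the Banach property of each \(D_{C_j}\), independence of the exhaustion by mutual cofinality of the two families of norms, stabilization \(C_j=\Delta\) in the compact case, and the restriction map realized as the inclusion of formal series, with each \(|\cdot|_{C'_k}\) dominated by some \(|\cdot|_{C_j}\). You go slightly further than the paper by justifying the identification \(D_\Delta^{\mathrm{Fr}}=\bigcap_jD_{C_j}\) through continuity of the coefficient functionals, which the paper takes directly from the series descriptions.
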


\begin{proof}
A sequence in \(D_\Delta^{\mathrm{Fr}}\) is Cauchy precisely when it is
Cauchy in every \(D_{C_j}\). Since each \(D_{C_j}\) is Banach, the
componentwise limits exist and are compatible, so \(D_\Delta^{\mathrm{Fr}}\)
is complete.

If \((C_j)\) and \((C'_k)\) are two cofinal compact-polyhedral exhaustions
of \(\Delta\), then every \(C_j\) is contained in some \(C'_k\), and
conversely. Hence the two families of norms are cofinal and define the same
algebra and topology.

If \(\Delta\) is compact, cofinality applied to \(\Delta\) itself shows
that \(C_j=\Delta\) for some \(j\), and therefore
\(D_\Delta^{\mathrm{Fr}}=D_\Delta\).

Finally, let \(\Delta'\subseteq\Delta\). Under the ordered-monomial
description above, every series in \(D_\Delta^{\mathrm{Fr}}\) converges
uniformly on each compact polyhedron contained in \(\Delta\), and hence on
each compact polyhedron contained in \(\Delta'\). Thus
\[
D_\Delta^{\mathrm{Fr}}\subseteq D_{\Delta'}^{\mathrm{Fr}}
\]
inside the common space of formal ordered-monomial series. The inclusion is
continuous because each defining seminorm for \(D_{\Delta'}^{\mathrm{Fr}}\)
is dominated by one of the defining seminorms for \(D_\Delta^{\mathrm{Fr}}\).
\end{proof}

\begin{remark}\label{rem:toric-unbounded-comparison}
As recalled in Section~\ref{subsec:toric-background}, the toric
construction uses the recession cone to choose an ambient toric chart.
Our Fr\'echet construction keeps \(\mathcal S(D)\) fixed and uses compact
exhaustion.
\end{remark}

\section{Rational models, flatness, and base change}\label{sec:flatness}

The constructions of Section~\ref{sec:polyhedral-completions} are defined
for arbitrary real polyhedra and require no rationality assumption. We now
restrict to the rational setting in order to obtain finite-type models over
the valuation ring and derive flatness from their adic completions.

Assume throughout this section that \(K\) is discretely valued, with
valuation ring \(R=K^\circ\), uniformizer \(\pi\), residue field \(k\),
and \(\operatorname{val}(\pi)=1\). Thus
\(\Gamma=\operatorname{val}(K^*)=\mathbb Z\).

\subsection{Models over the valuation ring}

Let \(\Delta\subseteq\Omega\) be a nonempty compact
\(\Gamma\)-rational polyhedron. For
\(m=(\alpha,\beta)\in\mathbb N^{2n}\), set
\[
\lambda_\Delta(m)=\min_{w\in\Delta}\langle w,m\rangle,
\qquad a_\Delta(m)=\left\lceil-\lambda_\Delta(m)\right\rceil.
\]
Define
\[
D_\Delta^\circ=\{f\in D_\Delta:|f|_\Delta\leq1\},
\qquad A_\Delta=D\cap D_\Delta^\circ.
\]
For \(P=\sum_{\alpha,\beta}c_{\alpha,\beta}x^\alpha\partial^\beta\in D\),
Proposition~\ref{prop:series-descriptions} shows that
\[
P\in A_\Delta\quad\Longleftrightarrow\quad
\operatorname{val}(c_{\alpha,\beta})+\lambda_\Delta(\alpha,\beta)\geq0
\quad\text{for every }(\alpha,\beta).
\]
Since the coefficient valuations are integral, this is equivalent to
\(\operatorname{val}(c_{\alpha,\beta})\geq a_\Delta(\alpha,\beta)\).
When the vertices of \(\Delta\) are integral,
\(a_\Delta(m)=-\lambda_\Delta(m)\); the rounding above is needed only for
rational polyhedra.
This coefficient condition is the counterpart of the condition defining
\(K[M]^\Delta\) in Section~\ref{subsec:toric-background}, with ordered monomials \(x^\alpha\partial^\beta\) indexed by \(\mathbb N^{2n}\).

\begin{proposition}\label{prop:integral-model}
The algebra \(A_\Delta\) is finitely generated as an \(R\)-algebra, left
and right Noetherian, and flat as an \(R\)-module.
\end{proposition}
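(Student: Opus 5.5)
The plan is to make \(A_\Delta\) explicit as a free \(R\)-module with a monomial basis, and then read all three properties off an affine monoid attached to \(\Delta\). By the coefficient criterion above, \(A_\Delta\) consists of the finite sums \(\sum c_{\alpha,\beta}x^\alpha\partial^\beta\) with \(\operatorname{val}(c_{\alpha,\beta})\geq a_\Delta(\alpha,\beta)\). Hence the elements \(\pi^{a_\Delta(m)}x^\alpha\partial^\beta\), for \(m=(\alpha,\beta)\in\mathbb N^{2n}\), form an \(R\)-basis of \(A_\Delta\). Two things follow at once. First, \(A_\Delta\) is a free, hence flat, \(R\)-module. Second, \(A_\Delta\) is a subring of \(D\), because \(|\cdot|_\Delta\) is submultiplicative and so the unit ball \(D_\Delta^\circ\) is closed under products.

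For finite generation, consider
\[
S=\{(m,k)\in\mathbb N^{2n}\times\mathbb Z:\ k\geq a_\Delta(m)\}.
\]
This is a monoid. Indeed, \(\lambda_\Delta\) is a minimum of linear forms and is therefore superadditive, so
\[
a_\Delta(m)+a_\Delta(m')\geq-\lambda_\Delta(m)-\lambda_\Delta(m')\geq-\lambda_\Delta(m+m'),
\]
and integrality of the left-hand side gives \(a_\Delta(m)+a_\Delta(m')\geq a_\Delta(m+m')\). Moreover, \(S\) is exactly the set of lattice points of the cone
\[
\{(m,t)\in\mathbb R_{\geq0}^{2n}\times\mathbb R:\ t\geq-\langle q,m\rangle\ \text{for all }q\in\operatorname{Vert}(\Delta)\}.
\]
This cone is rational because the vertices of a \(\Gamma\)-rational polytope are rational when \(\Gamma=\mathbb Z\). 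Gordan's lemma then yields finitely many generators \((m_1,k_1),\ldots,(m_r,k_r)\) of \(S\); note that \((0,1)\), which corresponds to \(\pi\), lies in \(S\).

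The claim is that the elements \(g_j=\pi^{k_j}x^{\alpha_j}\partial^{\beta_j}\) generate \(A_\Delta\) as an \(R\)-algebra. We argue by induction on \(|m|=|\alpha|+|\beta|\). Write \((m,k)\in S\) as a sum of the generators \((m_j,k_j)\). The ordered-monomial multiplication formula (principal coefficient \(1\), integral correction coefficients) shows that the corresponding product of the \(g_j\) equals \(\pi^k x^\alpha\partial^\beta\) plus terms of strictly smaller total degree. These correction terms lie in \(A_\Delta\), being the difference of two elements of the subring \(A_\Delta\). Since they have smaller degree, they are \(R\)-combinations of basis elements of smaller degree and are handled by the induction hypothesis.

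For the Noetherian property, filter \(A_\Delta\) by total degree:
\[
F_dA_\Delta=\bigoplus_{|m|\leq d}R\,\pi^{a_\Delta(m)}x^\alpha\partial^\beta .
\]
This is an exhaustive \(\mathbb N\)-filtration compatible with multiplication. The key step, and the one requiring the most care, is to identify the associated graded ring with the commutative monoid algebra \(R[S]/(\text{identification of }(0,1)\text{ with }\pi)\), or equivalently with the \(R\)-subalgebra of \(R[x,\xi]\) spanned by the elements \(\pi^{a_\Delta(m)}x^\alpha\xi^\beta\). For this one must check that commutators strictly lower the degree and that leading coefficients multiply with coefficient \(1\). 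Granting this, the associated graded ring is a finitely generated commutative \(R\)-algebra (generated by the images of the \(g_j\)) and hence Noetherian. The standard lifting lemma for exhaustive positive filtrations then shows that \(A_\Delta\) is left and right Noetherian.
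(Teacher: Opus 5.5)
Your proposal is correct and follows essentially the paper's route. Both arguments use the total-degree filtration, Gordan's lemma for the monoid of pairs \((m,k)\) with \(k\geq a_\Delta(m)\), lifting of monomial generators by induction on degree for finite generation, and the filtered-ring theorem for the two-sided Noetherian property; the only cosmetic difference is that you get flatness from the explicit free basis \(\pi^{a_\Delta(m)}x^\alpha\partial^\beta\) rather than from torsion-freeness over the DVR.

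One small slip: \(a_\Delta(m)\) can be negative (e.g.\ \(n=1\), \(\Delta=\{(1,-2)\}\) gives \(\pi^{-1}x\in A_\Delta\)), so the associated graded ring lives in \(K[x,\xi]\), not \(R[x,\xi]\). Your monoid-algebra description of it is unaffected.
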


\begin{proof}
Equip \(A_\Delta\) with the filtration induced by total ordered-monomial
degree on \(D\). Since the commutator terms have smaller degree, the
associated graded ring is the commutative \(R\)-algebra
\[
\operatorname{gr}A_\Delta
=\bigoplus_{m=(\alpha,\beta)\in\mathbb N^{2n}}
\pi^{a_\Delta(m)}R\,x^\alpha\xi^\beta.
\]
Let \(w^{(1)},\ldots,w^{(r)}\) be the vertices of \(\Delta\), and consider
\[
S_\Delta=\left\{(q,m)\in\mathbb Z\times\mathbb N^{2n}:
q+\langle w^{(j)},m\rangle\geq0\text{ for }1\leq j\leq r\right\}.
\]
This is the semigroup of integral points of a rational polyhedral cone,
hence is finitely generated by Gordan's lemma. For integral \(q\), the
defining inequalities are equivalent to \(q\geq a_\Delta(m)\). Thus
\(\operatorname{gr}A_\Delta\) is the \(R\)-subalgebra of
\(K[x_1,\ldots,x_n,\xi_1,\ldots,\xi_n]\) generated by the monomials
\(\pi^qx^\alpha\xi^\beta\) with \((q,\alpha,\beta)\in S_\Delta\).
In particular, \(\operatorname{gr}A_\Delta\) is a finitely generated
Noetherian \(R\)-algebra.

Lifting a finite set of homogeneous algebra generators of
\(\operatorname{gr}A_\Delta\) and inducting on the filtration degree shows
that \(A_\Delta\) is finitely generated over \(R\). Since the total-degree
filtration is exhaustive and discrete and \(\operatorname{gr}A_\Delta\)
is Noetherian, the standard filtered-ring theorem
\cite[Chapter~I, Corollary~5.7]{NVO79}, applied also to the opposite ring,
implies that \(A_\Delta\) is left and right Noetherian. Finally,
\(A_\Delta\subseteq D\) is torsion-free over the discrete valuation ring
\(R\), hence \(R\)-flat.
\end{proof}

Let \(\widehat A_\Delta=\varprojlim_{\ell\geq1} A_\Delta/\pi^{\ell}A_\Delta\).

\begin{proposition}\label{prop:integral-completion}
There are canonical isomorphisms
\begin{equation}\label{eq:model-completion}
\widehat A_\Delta\simeq D_\Delta^\circ,\qquad
A_\Delta[1/\pi]\simeq D,\qquad
\widehat A_\Delta[1/\pi]\simeq D_\Delta.
\end{equation}
\end{proposition}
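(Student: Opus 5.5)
The plan is to work entirely in the ordered-monomial coordinates. Proposition~\ref{prop:series-descriptions} identifies \(D_\Delta\) with formal series \(\sum c_m x^\alpha\partial^\beta\), \(m=(\alpha,\beta)\), satisfying \(\operatorname{val}(c_m)+\lambda_\Delta(m)\to+\infty\). Under this identification the norm is \(|f|_\Delta=\sup_m|c_m|e^{-\lambda_\Delta(m)}\). Indeed, for each \(m\) we have \(\max_q e^{-\langle q,m\rangle}=e^{-\lambda_\Delta(m)}\), and the sup over \(m\) commutes with the max over vertices. Hence
\[
D_\Delta^\circ=\Bigl\{\textstyle\sum c_m x^\alpha\partial^\beta:\operatorname{val}(c_m)\geq a_\Delta(m),\ \operatorname{val}(c_m)-a_\Delta(m)\to\infty\Bigr\}.
\]
This uses that valuations are integral, exactly as in the description of \(A_\Delta\). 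Consequently \(D_\Delta^\circ\) is, as an \(R\)-module, the \(\pi\)-adically completed direct sum \(\widehat\bigoplus_m \pi^{a_\Delta(m)}R\,x^\alpha\partial^\beta\), and \(A_\Delta\) is the uncompleted direct sum \(\bigoplus_m\pi^{a_\Delta(m)}R\,x^\alpha\partial^\beta\).

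\textbf{Step 1.} For \(A_\Delta[1/\pi]\simeq D\), note that \(A_\Delta\subseteq D\) is torsion-free. Any \(P\in D\) has finitely many coefficients, so \(\pi^NP\in A_\Delta\) for \(N\geq\max_m(a_\Delta(m)-\operatorname{val}(c_m))\). The natural map \(A_\Delta[1/\pi]\to D\) is therefore bijective, and it is an algebra map.

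\textbf{Step 2.} For \(\widehat A_\Delta\simeq D_\Delta^\circ\), first observe that \(\pi^\ell A_\Delta=D\cap\pi^\ell D_\Delta^\circ=\{P\in D:|P|_\Delta\leq|\pi|^\ell\}\), because both are described coefficientwise by \(\operatorname{val}(c_m)\geq a_\Delta(m)+\ell\). Next, \(D\) is dense in \(D_\Delta^\circ\): truncations of a series in \(D_\Delta^\circ\) lie in \(A_\Delta\) and converge to it. It follows that \(A_\Delta/\pi^\ell A_\Delta\to D_\Delta^\circ/\pi^\ell D_\Delta^\circ\) is an isomorphism. Injectivity holds by the intersection identity. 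Surjectivity holds because any \(f\in D_\Delta^\circ\) differs from a truncation by an element of norm \(\leq|\pi|^\ell\).

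Passing to the limit gives \(\widehat A_\Delta\simeq\varprojlim D_\Delta^\circ/\pi^\ell D_\Delta^\circ\). The right-hand side equals \(D_\Delta^\circ\), since \(D_\Delta^\circ\) is complete and the \(\pi\)-adic topology on it coincides with the norm topology: the balls are exactly the \(\pi^\ell D_\Delta^\circ\), because the norm takes values in \(|\pi|^{\mathbb Z}\cdot e^{-\lambda_\Delta(m)}\) and all the bounds are coefficientwise. The map is a ring isomorphism, since it is induced by the identity on \(A_\Delta\) and multiplication is continuous.

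\textbf{Step 3.} For \(\widehat A_\Delta[1/\pi]\simeq D_\Delta\), invert \(\pi\) in \(D_\Delta^\circ\). Every \(f\in D_\Delta\) has finite norm, so \(\pi^Nf\in D_\Delta^\circ\) for large \(N\), and \(D_\Delta^\circ\) is \(\pi\)-torsion-free.

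The main obstacle is the identification of the \(\pi\)-adic and norm topologies on \(D_\Delta^\circ\), specifically the equality \(\pi^\ell D_\Delta^\circ=\{f:|f|_\Delta\leq|\pi|^\ell\}\) when \(\Delta\) has merely rational vertices, so that \(\lambda_\Delta(m)\notin\mathbb Z\). I would handle this coefficientwise. Since \(\operatorname{val}(c_m)\in\mathbb Z\), the condition \(\operatorname{val}(c_m)+\lambda_\Delta(m)\geq\ell\) is equivalent to \(\operatorname{val}(c_m)\geq a_\Delta(m)+\ell\), which is equivalent to \(\pi^{-\ell}c_m\) satisfying the unit-ball condition. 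This makes the division by \(\pi^\ell\) stay inside \(D_\Delta^\circ\), including the convergence condition.
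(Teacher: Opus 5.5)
Your proposal is correct and follows essentially the same route as the paper. Both arguments use the intersection identity $A_\Delta\cap\pi^\ell D_\Delta^\circ=\pi^\ell A_\Delta$, density of truncations, the resulting isomorphisms $A_\Delta/\pi^\ell A_\Delta\simeq D_\Delta^\circ/\pi^\ell D_\Delta^\circ$, and $\pi$-adic completeness of $D_\Delta^\circ$ from agreement of the $\pi$-adic and norm topologies, followed by inverting $\pi$.

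The step you flag as the main obstacle is actually immediate from homogeneity of the norm. The identity $|\pi^{-\ell}f|_\Delta=|\pi|^{-\ell}|f|_\Delta$ gives $\pi^\ell D_\Delta^\circ=\{f\in D_\Delta^\circ:|f|_\Delta\le|\pi|^\ell\}$ directly, with no coefficientwise analysis or rationality input needed.
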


\begin{proof}
Since \(D_\Delta^\circ\) is the closed unit ball of the Banach algebra
\(D_\Delta\), it is complete. For every \(\ell\geq0\), we have
\[
\pi^{\ell}D_\Delta^\circ
=\{f\in D_\Delta^\circ:|f|_\Delta\leq|\pi|^\ell\}.
\]
Hence the \(\pi\)-adic topology on \(D_\Delta^\circ\) agrees with the
topology induced by \(|\cdot|_\Delta\). Since \(D_\Delta^\circ\) is
complete for the latter topology, it is therefore \(\pi\)-adically
complete.

By Proposition~\ref{prop:series-descriptions}, the finite ordered-monomial
truncations of every \(f\in D_\Delta^\circ\) converge to \(f\) in
\(D_\Delta\). Truncation does not increase the \(\Delta\)-norm, so these
truncations belong to \(A_\Delta\). Hence \(A_\Delta\) is dense in
\(D_\Delta^\circ\).

Moreover, \(A_\Delta\cap\pi^{\ell}D_\Delta^\circ=\pi^{\ell}A_\Delta\).
Only the inclusion from left to right requires verification. If
\(P\in A_\Delta\cap\pi^{\ell}D_\Delta^\circ\), then \(\pi^{-\ell}P\) lies
in both \(D\) and \(D_\Delta^\circ\), hence in \(A_\Delta\).
It follows that the inclusion \(A_\Delta\subseteq D_\Delta^\circ\)
induces, for every \(\ell\), an isomorphism
\[
A_\Delta/\pi^{\ell}A_\Delta\xrightarrow{\sim}
D_\Delta^\circ/\pi^{\ell}D_\Delta^\circ.
\]
Injectivity follows from the preceding intersection identity, while
surjectivity follows from density. Since \(D_\Delta^\circ\) is
\(\pi\)-adically complete, we deduce that
\(\widehat A_\Delta\simeq D_\Delta^\circ\).

Finally, every element of \(D\) becomes an element of \(A_\Delta\) after
multiplication by a sufficiently large power of \(\pi\), so
\(A_\Delta[1/\pi]=D\). Likewise, every element of \(D_\Delta\) becomes
an element of \(D_\Delta^\circ\) after multiplication by a sufficiently
large power of \(\pi\). Consequently,
\(\widehat A_\Delta[1/\pi]\simeq D_\Delta^\circ[1/\pi]=D_\Delta\).
\end{proof}

\subsection{Compact rational flatness}

The finite-type models above provide the flatness needed later for base
change.

\begin{theorem}\label{thm:compact-flat}
Let \(\Delta\subseteq\Omega\) be a nonempty compact \(\Gamma\)-rational
polyhedron. Then the canonical homomorphism \(D\to D_\Delta\) is flat on
both sides. Moreover, \(\pi\)-adic completion is exact on finitely generated
left \(A_\Delta\)-modules, and for every such module \(M\) there are
canonical isomorphisms
\[
\widehat M\simeq\widehat A_\Delta\otimes_{A_\Delta}M
\simeq D_\Delta^\circ\otimes_{A_\Delta}M,
\]
where \(\widehat M\) denotes the \(\pi\)-adic completion of \(M\).
\end{theorem}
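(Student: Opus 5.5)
The plan is to prove the statement in three steps: first the Artin--Rees-type exactness of \(\pi\)-adic completion over \(A_\Delta\), then flatness of \(A_\Delta\to\widehat A_\Delta\), and finally flatness of \(D\to D_\Delta\) by inverting \(\pi\). Throughout, we use Proposition~\ref{prop:integral-model}: \(A_\Delta\) is left and right Noetherian and finitely generated over \(R\).

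The key point is that \(\pi\) is a \emph{central} element of \(A_\Delta\), since \(\pi\in R\subseteq K\) is central in \(D\). Hence \(\pi A_\Delta\) is a two-sided ideal generated by a central regular element, and the noncommutative Artin--Rees property holds for it over the Noetherian ring \(A_\Delta\). Concretely, let \(N\subseteq M\) be finitely generated left \(A_\Delta\)-modules. The Rees ring \(\bigoplus_\ell \pi^\ell A_\Delta t^\ell\) is isomorphic to \(A_\Delta[\pi t]\). This is a quotient of the polynomial ring \(A_\Delta[T]\) in a central variable, so it is left Noetherian by the Hilbert basis theorem. The Rees module \(\bigoplus_\ell (N\cap\pi^\ell M)t^\ell\) is then a finitely generated submodule of \(\bigoplus_\ell \pi^\ell M t^\ell\). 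This yields some \(c\) with
\[
N\cap\pi^{\ell+c}M=\pi^\ell(N\cap\pi^cM)\qquad\text{for all }\ell\geq0.
\]

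From this the usual Mittag-Leffler argument applies. For a short exact sequence \(0\to N\to M\to Q\to0\) of finitely generated modules, the systems \(N/(N\cap\pi^\ell M)\) and \(N/\pi^\ell N\) are cofinal. The inverse limit of the surjective system
\[
0\to N/(N\cap\pi^\ell M)\to M/\pi^\ell M\to Q/\pi^\ell Q\to0
\]
is exact, so completion is exact. Next, for finite free modules the map \(\widehat A_\Delta\otimes_{A_\Delta}A_\Delta^r\to\widehat{A_\Delta^r}\) is an isomorphism. Choose a finite presentation \(A_\Delta^s\to A_\Delta^r\to M\to0\). Both functors \(\widehat A_\Delta\otimes_{A_\Delta}(-)\) and \(\widehat{(-)}\) are right exact on finitely generated modules, the latter by the exactness just proved. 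The five lemma then gives \(\widehat M\simeq\widehat A_\Delta\otimes_{A_\Delta}M\). Combined with Proposition~\ref{prop:integral-completion}, this gives \(\widehat M\simeq D_\Delta^\circ\otimes_{A_\Delta}M\).

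Exactness of \(\widehat A_\Delta\otimes_{A_\Delta}(-)\) on finitely generated left modules implies that \(\widehat A_\Delta\) is flat as a right \(A_\Delta\)-module. This reduction uses the criterion that \(\operatorname{Tor}_1(\widehat A_\Delta, A_\Delta/J)=0\) for finitely generated left ideals \(J\). The same argument applied to right modules, using that \(A_\Delta\) is right Noetherian, gives flatness as a left module. Finally, \(D=A_\Delta[1/\pi]\) is a central localization, and Proposition~\ref{prop:integral-completion} gives
\[
D_\Delta\simeq\widehat A_\Delta[1/\pi]\simeq D\otimes_{A_\Delta}\widehat A_\Delta.
\]
For a left \(D\)-module \(M\) this yields
\[
D_\Delta\otimes_DM\simeq\widehat A_\Delta\otimes_{A_\Delta}M,
\]
because \(D\otimes_{A_\Delta}M\simeq M\). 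The right-hand functor is exact, since \(\widehat A_\Delta\) is flat over \(A_\Delta\) and restriction of scalars from \(D\) to \(A_\Delta\) is exact. Hence \(D_\Delta\) is right flat over \(D\), and the symmetric argument gives left flatness.

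The main obstacle I expect is the Artin--Rees step in the noncommutative setting. One must check carefully that the Rees algebra is left Noetherian and that the Rees module of \(N\cap\pi^\ell M\) is a submodule of a finitely generated module, not merely of a module over \(A_\Delta\). Centrality of \(\pi\) is what makes this work, and I would state that explicitly.
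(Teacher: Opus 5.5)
Your proposal is correct and follows the same route as the paper. The paper also gets two-sided flatness of \(\widehat A_\Delta\) over the Noetherian ring \(A_\Delta\) from centrality of \(\pi\), and then inverts \(\pi\) using Proposition~\ref{prop:integral-completion}; the only difference is that you prove the Artin--Rees, completion-exactness and \(\widehat M\simeq\widehat A_\Delta\otimes_{A_\Delta}M\) steps directly via the Rees ring \(A_\Delta[\pi t]\), whereas the paper cites them from \cite[Chapter~I, Propositions~5.10 and~5.12]{NVO79}.
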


\begin{proof}
Since \(A_\Delta\) is left and right Noetherian and \(\pi\) is central,
its \(\pi\)-adic completion \(\widehat A_\Delta\) is flat over
\(A_\Delta\) on both sides; see
\cite[Chapter~I, Propositions~5.10 and~5.12]{NVO79}.
Proposition~\ref{prop:integral-completion} identifies
\(A_\Delta[1/\pi]=D\) and \(\widehat A_\Delta[1/\pi]=D_\Delta\).
Localizing at the central element \(\pi\) preserves flatness, so
\(D\to D_\Delta\) is flat on both sides. The assertions about completion of
finite modules follow from the same cited results and
Proposition~\ref{prop:integral-completion}.
\end{proof}

\begin{proposition}\label{prop:compact-relations}
Let \(N\subseteq D^r\) be a finitely generated left submodule. Then
\(D_\Delta N=\overline N\) in \(D_\Delta^r\). In particular,
\(D_\Delta N\) is closed.
\end{proposition}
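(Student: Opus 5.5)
The plan is to pass to the integral model and use the standard fact that, over a Noetherian ring, finitely generated modules have complete, closed relation modules. Here \(\Delta\) is the nonempty compact \(\Gamma\)-rational polyhedron fixed in this section.

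\emph{Integral relations.} Put \(N^\circ=N\cap A_\Delta^r\). This is a left \(A_\Delta\)-submodule of \(A_\Delta^r\). It is finitely generated because \(A_\Delta\) is left Noetherian (Proposition~\ref{prop:integral-model}). Since \(A_\Delta[1/\pi]=D\) (Proposition~\ref{prop:integral-completion}) and \(\pi\) is central, every element of \(N\) becomes an element of \(N^\circ\) after multiplication by a power of \(\pi\). Hence \(N=N^\circ[1/\pi]\), and \(D_\Delta N=D_\Delta N^\circ\).

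\emph{Completion of the quotient.} Apply Theorem~\ref{thm:compact-flat} to the exact sequence \(0\to N^\circ\to A_\Delta^r\to M^\circ\to0\) of finitely generated left \(A_\Delta\)-modules. Exactness of \(\pi\)-adic completion gives an exact sequence
\[
0\to\widehat{N^\circ}\to(D_\Delta^\circ)^r\to\widehat{M^\circ}\to0.
\]
The canonical isomorphism \(\widehat{N^\circ}\simeq D_\Delta^\circ\otimes_{A_\Delta}N^\circ\) identifies the image of \(\widehat{N^\circ}\) in \((D_\Delta^\circ)^r\) with \(D_\Delta^\circ N^\circ\). Since \(\widehat{N^\circ}\) is \(\pi\)-adically complete, this image is the \(\pi\)-adic closure of \(N^\circ\) in \((D_\Delta^\circ)^r\). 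The main point needing care is that the two topologies on \(N^\circ\) agree: the \(\pi\)-adic topology of \(N^\circ\) itself, and the topology induced from \((D_\Delta^\circ)^r\), i.e.\ \(N^\circ\cap\pi^\ell(D_\Delta^\circ)^r\). For this I would invoke the Artin--Rees property of \(\pi\). This holds because \(\pi\) is central in the Noetherian ring \(A_\Delta\) (see \cite[Chapter~I]{NVO79}), and it is exactly what underlies the exactness statement in Theorem~\ref{thm:compact-flat}. As in Proposition~\ref{prop:integral-completion}, the \(\pi\)-adic topology on \((D_\Delta^\circ)^r\) coincides with the norm topology. So \(D_\Delta^\circ N^\circ\) is complete, hence closed in \((D_\Delta^\circ)^r\), and it contains \(N^\circ\) densely.

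\emph{Conclusion.} Inverting \(\pi\) gives \(D_\Delta N=D_\Delta^\circ N^\circ[1/\pi]\), using \(D_\Delta=D_\Delta^\circ[1/\pi]\). This set is closed in \(D_\Delta^r\): if \(f_j\to f\) with \(f_j\in D_\Delta N\), then after multiplying by one fixed power of \(\pi\) we may assume \(f\) and all but finitely many \(f_j\) lie in the open unit ball \((D_\Delta^\circ)^r\). We must also check that such \(f_j\) lie in \(D_\Delta^\circ N^\circ\). This follows from \(D_\Delta N\cap(D_\Delta^\circ)^r=D_\Delta^\circ N^\circ\), which I would prove from the \(\pi\)-torsion-freeness of \(\widehat{M^\circ}\). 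That torsion-freeness comes from \(M^\circ\) being \(\pi\)-torsion-free (it embeds in \(D^r/N\)) together with flatness of completion. Hence \(D_\Delta N\supseteq\overline N\). Conversely, \(D\) is dense in \(D_\Delta\) and multiplication is continuous, so \(D_\Delta N\subseteq\overline{DN}=\overline N\). Together these give \(D_\Delta N=\overline N\).

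I expect the main obstacle to be the two compatibility checks: the Artin--Rees comparison of topologies on \(N^\circ\), and the saturation identity \(D_\Delta N\cap(D_\Delta^\circ)^r=D_\Delta^\circ N^\circ\).
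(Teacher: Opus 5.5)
Your proposal is correct and follows essentially the same route as the paper: you saturate $N$ to $N^\circ=N\cap A_\Delta^r$, use exactness of $\pi$-adic completion and flatness to show that $D_\Delta^\circ N^\circ$ is closed and that $D_\Delta N\cap(D_\Delta^\circ)^r=D_\Delta^\circ N^\circ$, and then finish with the bounded-sequence argument and density. The differences are minor. The paper gets closedness directly, since $D_\Delta^\circ N^\circ$ is the kernel of $(D_\Delta^\circ)^r\to\widehat{M^\circ}$ and the target is $\pi$-adically separated. Your Artin--Rees comparison is automatic here, because the saturation of $N^\circ$ gives $N^\circ\cap\pi^\ell(D_\Delta^\circ)^r=\pi^\ell N^\circ$ exactly.
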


\begin{proof}
Let \(L=N\cap A_\Delta^r\). Since \(A_\Delta\) is left Noetherian,
\(L\) is finitely generated. Moreover, \(L[1/\pi]=N\). Indeed, if
\(x\in N\), then \(\pi^mx\in A_\Delta^r\) for some \(m\), and hence
\(\pi^mx\in L\); the reverse inclusion is immediate. Since
\(D_\Delta=D_\Delta^\circ[1/\pi]\), it follows that
\begin{equation}\label{eq:localized-relations}
D_\Delta N=D_\Delta L=(D_\Delta^\circ L)[1/\pi].
\end{equation}

The quotient \(A_\Delta^r/L\) is \(\pi\)-torsion-free. If
\(x\in A_\Delta^r\) and \(\pi x\in L\), then \(\pi x\in N\); since
\(\pi\) is invertible in \(D\), it follows that \(x\in N\), and hence
\(x\in L\).
Exactness of \(\pi\)-adic completion yields
\[
0\longrightarrow D_\Delta^\circ L
\longrightarrow(D_\Delta^\circ)^r
\longrightarrow\widehat{A_\Delta^r/L}\longrightarrow0.
\]
Since \(\widehat{A_\Delta^r/L}\) is \(\pi\)-adically separated,
\(D_\Delta^\circ L\) is closed in \((D_\Delta^\circ)^r\) for the
\(\pi\)-adic topology.

Moreover,
\[
\widehat{A_\Delta^r/L}\simeq(D_\Delta^\circ)^r/D_\Delta^\circ L
\simeq D_\Delta^\circ\otimes_{A_\Delta}(A_\Delta^r/L).
\]
Multiplication by \(\pi\) is injective on \(A_\Delta^r/L\), and
flatness of \(D_\Delta^\circ\) over \(A_\Delta\) preserves this
injection after tensoring. Thus \((D_\Delta^\circ)^r/D_\Delta^\circ L\)
is \(\pi\)-torsion-free. Equivalently,
\begin{equation}\label{eq:saturated-relations}
(D_\Delta^\circ L)[1/\pi]\cap(D_\Delta^\circ)^r=D_\Delta^\circ L.
\end{equation}

We now compare the two topologies. The module \(D_\Delta^r\) carries the
Banach topology defined by the maximum \(\Delta\)-norm, while
\((D_\Delta^\circ)^r\) carries its \(\pi\)-adic topology. By
Proposition~\ref{prop:integral-completion}, the latter agrees with the
topology induced from \(D_\Delta^r\).

Suppose that \(y_j\to y\) in \(D_\Delta^r\), with \(y_j\in D_\Delta N\).
Then there is \(C>0\) such that
\[
\|y_j\|_\Delta,\ \|y\|_\Delta\leq C\qquad\text{for all }j.
\]
Choose \(m\) such that \(|\pi|^mC\leq1\). Then
\(\pi^my_j,\pi^my\in(D_\Delta^\circ)^r\). By
\eqref{eq:localized-relations}, \(y_j\in(D_\Delta^\circ L)[1/\pi]\),
so \eqref{eq:saturated-relations} implies that
\(\pi^my_j\in D_\Delta^\circ L\). Since \(\pi^my_j\to\pi^my\) in
the induced norm topology, and hence in the \(\pi\)-adic topology on
\((D_\Delta^\circ)^r\), closedness of \(D_\Delta^\circ L\) implies
that \(\pi^my\in D_\Delta^\circ L\). By
\eqref{eq:localized-relations} again, \(y\in D_\Delta N\). Thus
\(D_\Delta N\) is closed.

Finally, let \(g_1,\ldots,g_s\) generate \(N\) over \(D\). Since
\(D\) is dense in \(D_\Delta\), every element of
\(D_\Delta N=\sum_iD_\Delta g_i\) is a limit of elements of
\(N=\sum_iDg_i\). Hence \(N\) is dense in \(D_\Delta N\), and
therefore \(D_\Delta N=\overline N\).
\end{proof}

\subsection{Flatness for arbitrary rational polyhedra}

Let \(\Delta\subseteq\Omega\) be a nonempty \(\Gamma\)-rational
polyhedron. Choose, as in Section~\ref{subsec:polyhedral-background},
a nested cofinal exhaustion
\(C_1\subseteq C_2\subseteq\cdots\subseteq\Delta\) by nonempty compact
\(\Gamma\)-rational polyhedra, so that
\(D_\Delta^{\mathrm{Fr}}=\varprojlim_jD_{C_j}\).

We first record an inverse-limit lemma.

\begin{lemma}\label{lem:inverse-limit}
Suppose that
\[
0\longrightarrow K_j\longrightarrow E_j\longrightarrow F_j\longrightarrow0
\]
is a compatible projective system of exact sequences of nonarchimedean
Banach \(K\)-spaces and continuous linear maps, with each \(K_j\)
carrying the subspace norm. Assume that the transition maps
\(E_{j+1}\to E_j\) are contractive and that \(K_{j+1}\to K_j\) has dense
image for every \(j\). Then
\[
0\longrightarrow\varprojlim_jK_j\longrightarrow\varprojlim_jE_j
\longrightarrow\varprojlim_jF_j\longrightarrow0
\]
is exact.
\end{lemma}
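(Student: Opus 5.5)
Left exactness is formal, since \(\varprojlim\) is left exact on abelian groups: injectivity of \(\varprojlim K_j\to\varprojlim E_j\) and exactness in the middle follow from the levelwise statements. The real content is surjectivity of \(\varprojlim E_j\to\varprojlim F_j\). This is a Mittag-Leffler argument in the topological (Banach) form: the system \((K_j)\) has dense transition images and each \(K_j\) is complete, so \(\varprojlim^1K_j=0\). I will prove surjectivity directly by successive approximation rather than cite this vanishing.

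Fix \((f_j)\in\varprojlim F_j\). For each \(j\), choose some preimage \(e_j\in E_j\) of \(f_j\). The elements \(e_j\) and the image of \(e_{j+1}\) in \(E_j\) both map to \(f_j\), so their difference lies in \(K_j\). I will inductively correct the \(e_j\) to elements \(e'_j=e_j+k_j\), with \(k_j\in K_j\), such that
\[
\|\,\mathrm{im}(e'_{j+1})-e'_j\,\|\leq 2^{-j}.
\]
The construction at step \(j+1\) goes as follows. Suppose \(e'_j\) is already fixed. The defect \(d=\mathrm{im}(e_{j+1})-e'_j\) lies in \(K_j\). By density, choose \(k\in K_{j+1}\) whose image is within \(2^{-j}\) of \(-d\) in the subspace norm. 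Then set \(e'_{j+1}=e_{j+1}+k\).

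To pass to the limit, I need the corrected elements to stabilize at each fixed level \(i\). For \(l\ge j\ge i\), consider the images in \(E_i\) of \(e'_l\). The transition maps are contractive, so these images form a Cauchy sequence, with successive differences bounded by \(2^{-l}\). They therefore converge to some \(\varepsilon_i\in E_i\). The resulting family \((\varepsilon_i)\) is compatible, because the transition maps are continuous.

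Finally, each \(\varepsilon_i\) maps to \(f_i\). Every approximant of \(\varepsilon_i\) differs from \(e_i\) by an element of the image of \(K\), so it maps to \(f_i\). The preimage of \(f_i\) is closed, being a translate of the closed subspace \(K_i\); here I use that \(K_i\) is complete in the subspace norm, hence closed in \(E_i\). The limit \(\varepsilon_i\) therefore also maps to \(f_i\).

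The main obstacle is the bookkeeping at this last stage. Corrections made at higher levels must not spoil the lower levels. The mechanism is that the correction at stage \(l\) has norm at most \(2^{-l}\) after projection to every lower level, by contractivity of \(E_{j+1}\to E_j\) restricted to \(K\); this is why the hypotheses require contractive transitions and the subspace norm on \(K_j\). If \(K_{j+1}\to K_j\) is only known to be continuous rather than contractive, I would still use contractivity of the \(E\)-maps, because norms on \(K\) are computed in \(E\). Continuity of \(F_j\)-valued maps is never needed beyond well-definedness.
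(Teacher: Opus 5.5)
Your proposal is correct and is essentially the paper's argument: you lift $(f_j)$ level by level, use density of $K_{j+1}\to K_j$ to make the discrepancy between $e'_j$ and the image of $e'_{j+1}$ small (with $2^{-j}$ where the paper uses $|\pi|^j$), and use contractivity of the $E$-transitions to get Cauchy sequences at each fixed level whose limits form a compatible lift. The differences are only cosmetic: you also record the formal left exactness, and you show that $\varepsilon_i$ maps to $f_i$ via closedness of the fiber $e_i+K_i$ rather than continuity of $E_i\to F_i$. One wording point: in your last paragraph, what is small is the discrepancy $\mathrm{im}(e'_{l+1})-e'_l$, not the correction $k$ itself, which your middle paragraphs already handle correctly.
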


\begin{proof}
Let \((f_j)\in\varprojlim_jF_j\). Choose \(e_1\in E_1\) mapping to
\(f_1\). Inductively, suppose that \(e_j\) has been chosen, and choose
any lift \(e'_{j+1}\in E_{j+1}\) of \(f_{j+1}\). The difference between
\(e_j\) and the image of \(e'_{j+1}\) in \(E_j\) lies in \(K_j\).
By density, we may choose \(k_{j+1}\in K_{j+1}\) such that, after replacing
\(e'_{j+1}\) by \(e_{j+1}=e'_{j+1}+k_{j+1}\), the norm of this
difference is less than \(|\pi|^j\).

For each fixed \(i\), the images of \(e_j\) in \(E_i\), for \(j\geq i\),
form a Cauchy sequence. The transition maps are contractive and the
successive differences tend to zero. Let \(e_i^\infty\) denote its limit.
The elements \(e_i^\infty\) are compatible, and continuity shows that
\(e_i^\infty\) maps to \(f_i\). Thus \((f_j)\) lifts to an element of
\(\varprojlim_jE_j\).
\end{proof}

The compact result now passes to the Fr\'echet completion.

\begin{proposition}\label{prop:frechet-relations}
Let \(N\subseteq D^r\) be a finitely generated left submodule. Then
\begin{equation}\label{eq:relation-limit}
D_\Delta^{\mathrm{Fr}}N
=\varprojlim_jD_{C_j}N=\overline N,
\end{equation}
where the closure is taken in \((D_\Delta^{\mathrm{Fr}})^r\). In
particular, \(D_\Delta^{\mathrm{Fr}}N\) is closed.
\end{proposition}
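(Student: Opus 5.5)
The plan is to prove the three equalities in \eqref{eq:relation-limit} in the order $\varprojlim_jD_{C_j}N=\overline N$ first, and then $D_\Delta^{\mathrm{Fr}}N=\varprojlim_jD_{C_j}N$. Throughout, I regard all modules inside the common space of formal ordered-monomial series, so that $(D_\Delta^{\mathrm{Fr}})^r=\bigcap_j D_{C_j}^r$. Here $\varprojlim_jD_{C_j}N$ means $\bigcap_j D_{C_j}N$. Since the restriction maps $D_{C_{j+1}}\to D_{C_j}$ send $D_{C_{j+1}}N$ into $D_{C_j}N$, this is a well-defined subspace of $(D_\Delta^{\mathrm{Fr}})^r$.

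\emph{Closure.} By Proposition~\ref{prop:compact-relations}, each $D_{C_j}N$ equals the closure of $N$ in $D_{C_j}^r$. The Fr\'echet topology on $(D_\Delta^{\mathrm{Fr}})^r$ is the projective-limit topology. Hence an element $y$ lies in the Fr\'echet closure of $N$ if and only if, for every $j$ and every $\epsilon>0$, some $P\in N$ satisfies $\|y-P\|_{C_j}<\epsilon$. Because the norms increase with $j$, one $j$ at a time suffices. This condition says exactly that $y\in\bigcap_j\overline N^{D_{C_j}}=\bigcap_jD_{C_j}N$, which proves the second equality. The same description shows the intersection is closed.

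\emph{Generation.} The inclusion $D_\Delta^{\mathrm{Fr}}N\subseteq\bigcap_jD_{C_j}N$ is immediate. For the reverse, fix generators $g_1,\dots,g_s$ of $N$ and consider the surjection $\phi:D^s\to N$, $(a_i)\mapsto\sum a_ig_i$, with kernel $Z$. Since $D$ is Noetherian, $Z$ is finitely generated. By flatness (Theorem~\ref{thm:compact-flat}), we get exact sequences
\[
0\to D_{C_j}Z\to D_{C_j}^s\to D_{C_j}N\to0,
\]
where $D_{C_j}\otimes_DZ=D_{C_j}Z$ and $D_{C_j}\otimes_DN=D_{C_j}N$ again by flatness. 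I intend to apply Lemma~\ref{lem:inverse-limit} with $K_j=D_{C_j}Z$, $E_j=D_{C_j}^s$, and $F_j=D_{C_j}N$. The hypotheses are checked as follows.
\begin{enumerate}[label=\textup{(\alph*)}]
\item The transition maps $E_{j+1}\to E_j$ are contractive because $|\cdot|_{C_j}\leq|\cdot|_{C_{j+1}}$.
\item Each $K_j$ is closed by Proposition~\ref{prop:compact-relations}, so it is Banach with the subspace norm.
\item The image of $K_{j+1}\to K_j$ is dense because it contains $Z$, which is dense in $D_{C_j}Z=\overline Z$.
\item Each $F_j$ is Banach, being closed in $D_{C_j}^r$ with the subspace norm.
\end{enumerate}
The maps $E_j\to F_j$ are continuous since they are given by right multiplication by the fixed $g_i$.

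The lemma then gives surjectivity of $(D_\Delta^{\mathrm{Fr}})^s=\varprojlim E_j\to\varprojlim F_j=\bigcap_jD_{C_j}N$. Any $y$ in the intersection is therefore $\sum a_ig_i$ with $a_i\in D_\Delta^{\mathrm{Fr}}$, so $y\in D_\Delta^{\mathrm{Fr}}N$.

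The main obstacle is matching the lemma's topological hypotheses. The quotient norm on $F_j$ from $E_j$ must be equivalent to the subspace norm from $D_{C_j}^r$; this follows from the open mapping theorem, since $E_j\to F_j$ is a continuous surjection of Banach spaces, or directly from its proof. One also has to confirm that the lift constructed in the lemma really lands in the subspace $\bigcap_j D_{C_j}^s$. This holds because the inverse limit of the $E_j$ under the inclusion maps is exactly that intersection.
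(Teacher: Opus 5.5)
Your proposal is correct and follows the paper's proof: the decisive step is the same application of Lemma~\ref{lem:inverse-limit} to the flat base changes \(0\to D_{C_j}Z\to D_{C_j}^s\to D_{C_j}N\to0\), with the dense transition images supplied by Proposition~\ref{prop:compact-relations}. The only variation is that you identify \(\overline N\) with \(\bigcap_jD_{C_j}N\) directly from the increasing family of norms, whereas the paper deduces it from closedness of the intersection together with density of \(N\) in \(D_\Delta^{\mathrm{Fr}}N\). Your open-mapping concern is also unnecessary, since the lemma's proof uses only the norms on the \(E_j\) and continuity of \(E_j\to F_j\).
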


\begin{proof}
Choose generators of \(N\), and let \(\varphi:D^s\to D^r\) be the
resulting homomorphism with image \(N\). Set \(N_1=\ker\varphi\).
Since \(D\) is left Noetherian, \(N_1\) is finitely generated.
For every \(j\), Theorem~\ref{thm:compact-flat} yields an exact sequence
\[
0\longrightarrow D_{C_j}N_1\longrightarrow D_{C_j}^s
\longrightarrow D_{C_j}N\longrightarrow0.
\]
The transition maps \(D_{C_{j+1}}^s\to D_{C_j}^s\) are contractive
because \(|\cdot|_{C_j}\leq|\cdot|_{C_{j+1}}\). By
Proposition~\ref{prop:compact-relations}, \(D_{C_j}N_1\) is the closure
of \(N_1\) in \(D_{C_j}^s\). Hence
\(D_{C_{j+1}}N_1\to D_{C_j}N_1\) has dense image, since its image
contains \(N_1\). Lemma~\ref{lem:inverse-limit} therefore shows that the
natural map
\[
(D_\Delta^{\mathrm{Fr}})^s=\varprojlim_jD_{C_j}^s
\longrightarrow\varprojlim_jD_{C_j}N
\]
is surjective. Its image is \(D_\Delta^{\mathrm{Fr}}N\), proving the
first equality in \eqref{eq:relation-limit}.

Each \(D_{C_j}N\) is closed in \(D_{C_j}^r\) by
Proposition~\ref{prop:compact-relations}. If
\(p_j:(D_\Delta^{\mathrm{Fr}})^r\to D_{C_j}^r\) denotes the canonical
projection, then
\[
\varprojlim_jD_{C_j}N=\bigcap_jp_j^{-1}(D_{C_j}N),
\]
so this submodule is closed in \((D_\Delta^{\mathrm{Fr}})^r\).

Finite ordered-monomial truncations show that \(D\) is dense in
\(D_\Delta^{\mathrm{Fr}}\). If \(g_1,\ldots,g_s\) generate \(N\), then
\(N=\sum_iDg_i\) is dense in
\(D_\Delta^{\mathrm{Fr}}N=\sum_iD_\Delta^{\mathrm{Fr}}g_i\). Since
\(D_\Delta^{\mathrm{Fr}}N\) is closed, the second equality in
\eqref{eq:relation-limit} follows.
\end{proof}

The same argument applies to finitely generated right submodules.

\begin{theorem}\label{thm:frechet-flat}
For every nonempty \(\Gamma\)-rational polyhedron \(\Delta\subseteq\Omega\),
the canonical homomorphism \(D\to D_\Delta^{\mathrm{Fr}}\) is flat on both
sides.
\end{theorem}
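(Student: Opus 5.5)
To prove left flatness of \(D\to D_\Delta^{\mathrm{Fr}}\), I will use the ideal criterion. Since \(D\) is left Noetherian, it suffices to show that \(D_\Delta^{\mathrm{Fr}}\otimes_D-\) preserves injectivity of inclusions \(N\hookrightarrow D^r\) for finitely generated left submodules \(N\). (Flatness of a right module \(B\) over \(D\) means \(B\otimes_D-\) is exact on left modules, and Tor-vanishing against finitely presented modules suffices.) Equivalently, for each finitely generated \(N\subseteq D^r\) the natural map
\[
D_\Delta^{\mathrm{Fr}}\otimes_D N\longrightarrow (D_\Delta^{\mathrm{Fr}})^r
\]
should be injective. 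Right flatness follows by the same argument applied to right submodules, using the remark after Proposition~\ref{prop:frechet-relations} and Theorem~\ref{thm:compact-flat} on the other side.

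The plan is to present \(N\) and compare with the compact pieces. Choose \(\varphi:D^s\to D^r\) with image \(N\) and kernel \(N_1\), which is finitely generated. Then \(D_\Delta^{\mathrm{Fr}}\otimes_D N\) is the cokernel of \((D_\Delta^{\mathrm{Fr}})^{t}\to(D_\Delta^{\mathrm{Fr}})^s\), where \(D^t\to N_1\) is a surjection, and its image there is \(D_\Delta^{\mathrm{Fr}}N_1\). So injectivity amounts to
\[
\ker\bigl((D_\Delta^{\mathrm{Fr}})^s\xrightarrow{\varphi}(D_\Delta^{\mathrm{Fr}})^r\bigr)=D_\Delta^{\mathrm{Fr}}N_1.
\]
To prove this, take \(y\) in the kernel. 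Its image \(p_j(y)\in D_{C_j}^s\) lies in \(\ker(\varphi_{C_j})\). By Theorem~\ref{thm:compact-flat} this kernel equals \(D_{C_j}N_1\), as flatness of \(D\to D_{C_j}\) gives exactness of \(0\to D_{C_j}\otimes N_1\to D_{C_j}^s\to D_{C_j}^r\). Hence
\[
y\in\varprojlim_j D_{C_j}N_1 .
\]
Proposition~\ref{prop:frechet-relations}, applied to \(N_1\subseteq D^s\), identifies this projective limit with \(D_\Delta^{\mathrm{Fr}}N_1\), which gives the required equality.

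The main obstacle is already handled by Proposition~\ref{prop:frechet-relations}: passing the compact equality \(\ker=D_{C_j}N_1\) to the limit requires the Mittag-Leffler-type surjectivity of Lemma~\ref{lem:inverse-limit}. That surjectivity is what makes \(\varprojlim_j D_{C_j}N_1\) equal to \(D_\Delta^{\mathrm{Fr}}N_1\) rather than just containing it. The one remaining point to check is that a finitely generated submodule suffices, i.e.\ that the Tor-criterion reduction is valid for the noncommutative Noetherian ring \(D\). The standard statement is that a right module \(B\) is flat iff \(B\otimes_D I\to B\) is injective for all finitely generated left ideals \(I\); this is the case \(r=1\), which is covered above.
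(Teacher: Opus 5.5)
Your proposal is correct and is essentially the paper's proof. The paper likewise takes a finite presentation of a left ideal (your case \(r=1\)), uses Theorem~\ref{thm:compact-flat} to get \(\ker(D_{C_j}^s\to D_{C_j}^r)=D_{C_j}N_1\) on each compact piece, invokes Proposition~\ref{prop:frechet-relations} to identify \(\varprojlim_j D_{C_j}N_1\) with \(D_\Delta^{\mathrm{Fr}}N_1\), and concludes by the ideal criterion together with the right-module analogue for the other side; the only difference is that you call ``left flatness'' what the paper calls right flatness, which is immaterial since both sides are proved.
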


\begin{proof}
We first prove right flatness. Let \(I\subseteq D\) be a left ideal. Since
\(D\) is left Noetherian, choose a finite presentation
\[
D^t\xrightarrow{d_2}D^s\xrightarrow{d_1}I\longrightarrow0,
\]
and regard \(d_1\) also as a map \(D^s\to D\). Set
\(N=\ker d_1=\operatorname{im}d_2\).
For every \(j\), Theorem~\ref{thm:compact-flat} implies that
\[
\ker\!\left(D_{C_j}^s\longrightarrow D_{C_j}\right)=D_{C_j}N.
\]
Taking projective limits and applying
Proposition~\ref{prop:frechet-relations}, we obtain
\[
\begin{split}
\ker\!\left((D_\Delta^{\mathrm{Fr}})^s\longrightarrow D_\Delta^{\mathrm{Fr}}\right)
&=D_\Delta^{\mathrm{Fr}}N\\
&=\operatorname{im}\!\left((D_\Delta^{\mathrm{Fr}})^t
\longrightarrow(D_\Delta^{\mathrm{Fr}})^s\right).
\end{split}
\]
Thus the sequence obtained from the presentation of \(I\) is exact at
\((D_\Delta^{\mathrm{Fr}})^s\). Consequently, the canonical map
\(D_\Delta^{\mathrm{Fr}}\otimes_DI\to D_\Delta^{\mathrm{Fr}}\) is
injective. The left-ideal criterion for flatness shows that
\(D_\Delta^{\mathrm{Fr}}\) is flat as a right \(D\)-module.

Applying the same argument to right ideals, using the right-module version
of Proposition~\ref{prop:frechet-relations}, proves left flatness.
\end{proof}

\subsection{Base change of finitely generated \texorpdfstring{\(D\)}{D}-modules}
\label{subsec:finite-base-change}

Let \(\Delta\subseteq\Omega\) be a nonempty \(\Gamma\)-rational
polyhedron, and let \(M\) be a finitely generated left \(D\)-module.
Choose a finite presentation \(M\simeq D^r/N\). By right exactness of
tensor product,
\[
D_\Delta^{\mathrm{Fr}}\otimes_DM
\simeq(D_\Delta^{\mathrm{Fr}})^r/D_\Delta^{\mathrm{Fr}}N.
\]
Proposition~\ref{prop:frechet-relations} identifies
\(D_\Delta^{\mathrm{Fr}}N=\overline N\); hence
\begin{equation}\label{eq:frechet-presentation}
D_\Delta^{\mathrm{Fr}}\otimes_DM\simeq
(D_\Delta^{\mathrm{Fr}})^r/\overline N.
\end{equation}
In particular, this is a Hausdorff Fr\'echet
\(D_\Delta^{\mathrm{Fr}}\)-module. If \(\Delta\) is compact, it is a
Banach \(D_\Delta\)-module. This topology is independent of the finite
presentation. Indeed, lift the identity of \(M\) in both directions
between two finite free presentations. The resulting matrices over \(D\)
extend continuously after base change, and induce mutually inverse
continuous maps on the quotient modules.

The presentation is also compatible with compact exhaustion.

\begin{proposition}\label{prop:finite-module-limit}
Let \(C_1\subseteq C_2\subseteq\cdots\subseteq\Delta\) be a nested
cofinal exhaustion by nonempty compact \(\Gamma\)-rational polyhedra,
as in Section~\ref{subsec:polyhedral-background}. Then the canonical map
\begin{equation}\label{eq:finite-module-limit}
D_\Delta^{\mathrm{Fr}}\otimes_DM
\longrightarrow\varprojlim_j(D_{C_j}\otimes_DM)
\end{equation}
is an isomorphism of topological \(D_\Delta^{\mathrm{Fr}}\)-modules.
\end{proposition}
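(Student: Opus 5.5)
Fix a finite presentation \(M\simeq D^r/N\) with \(N\subseteq D^r\) finitely generated. Right exactness of the tensor product identifies \(D_{C_j}\otimes_DM\simeq D_{C_j}^r/D_{C_j}N\) for every \(j\), and \eqref{eq:frechet-presentation} identifies the source of \eqref{eq:finite-module-limit} with \((D_\Delta^{\mathrm{Fr}})^r/D_\Delta^{\mathrm{Fr}}N\). Under these identifications, the canonical map is induced by the projections \(p_j:(D_\Delta^{\mathrm{Fr}})^r\to D_{C_j}^r\). The plan is to apply Lemma~\ref{lem:inverse-limit} to the projective system of exact sequences
\[
0\longrightarrow D_{C_j}N\longrightarrow D_{C_j}^r\longrightarrow D_{C_j}^r/D_{C_j}N\longrightarrow0,
\]
and then compare kernels using Proposition~\ref{prop:frechet-relations}.

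The hypotheses of Lemma~\ref{lem:inverse-limit} are checked as in the proof of Proposition~\ref{prop:frechet-relations}. Each \(D_{C_j}N\) is closed in \(D_{C_j}^r\) by Proposition~\ref{prop:compact-relations}, so with the subspace norm it is Banach. The quotient \(D_{C_j}^r/D_{C_j}N\) is then a Banach space for the quotient norm. The transition maps \(D_{C_{j+1}}^r\to D_{C_j}^r\) are contractive because \(|\cdot|_{C_j}\leq|\cdot|_{C_{j+1}}\). The maps \(D_{C_{j+1}}N\to D_{C_j}N\) have dense image, because the image contains \(N\) and \(N\) is dense in \(D_{C_j}N=\overline N\). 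Lemma~\ref{lem:inverse-limit} then gives exactness of
\[
0\to\varprojlim_jD_{C_j}N\to(D_\Delta^{\mathrm{Fr}})^r\to\varprojlim_j(D_{C_j}^r/D_{C_j}N)\to0,
\]
using \(\varprojlim_jD_{C_j}^r=(D_\Delta^{\mathrm{Fr}})^r\). By Proposition~\ref{prop:frechet-relations}, the kernel \(\varprojlim_jD_{C_j}N\) equals \(D_\Delta^{\mathrm{Fr}}N\). Hence the induced map \((D_\Delta^{\mathrm{Fr}})^r/D_\Delta^{\mathrm{Fr}}N\to\varprojlim_j(D_{C_j}\otimes_DM)\) is a bijection. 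It is visibly \(D_\Delta^{\mathrm{Fr}}\)-linear and compatible with the structure maps.

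It remains to show the bijection is a homeomorphism, where the target carries the projective-limit topology of the quotient Banach norms. Continuity is immediate: each composite \((D_\Delta^{\mathrm{Fr}})^r\to D_{C_j}^r/D_{C_j}N\) is continuous, so the quotient map from the source is continuous. For the inverse I would use the open mapping theorem for Fr\'echet spaces over the complete field \(K\) (see \cite[Chapter~I]{SchneiderNFA}). Both sides are Fr\'echet: the source is a quotient of a Fr\'echet space by a closed subspace, and the target is a countable projective limit of Banach spaces. A continuous linear bijection between them is therefore open. In the trivially valued case, where the open mapping theorem is unavailable, I would instead try to extract an explicit norm estimate from the lifting construction in the proof of Lemma~\ref{lem:inverse-limit}.

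I expect this topological comparison to be the main obstacle, since the algebraic isomorphism follows directly from Lemma~\ref{lem:inverse-limit} and Proposition~\ref{prop:frechet-relations}. Independence of the chosen presentation then follows from the remark preceding the proposition.
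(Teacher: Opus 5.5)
Your proof is correct. The algebraic part is the same as the paper's: you apply Lemma~\ref{lem:inverse-limit} to the sequences \(0\to D_{C_j}N\to D_{C_j}^r\to D_{C_j}\otimes_DM\to0\), using density of \(N\) in \(D_{C_j}N\), and then identify the kernel via Proposition~\ref{prop:frechet-relations}.

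You differ genuinely only in the topological comparison. You invoke the open mapping theorem for Fr\'echet spaces, which is valid here. Your fallback for the trivially valued case is unnecessary: Section~\ref{sec:flatness} assumes \(K\) is discretely valued with \(\operatorname{val}(\pi)=1\), so \(K\) is nontrivially valued.

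The paper avoids the open mapping theorem entirely. The quotient topology on \((D_\Delta^{\mathrm{Fr}})^r/D_\Delta^{\mathrm{Fr}}N\) is defined by the seminorms
\[
q_j(\bar x)=\inf_{y\in D_\Delta^{\mathrm{Fr}}N}\|x+y\|_{C_j}.
\]
The image of \(D_\Delta^{\mathrm{Fr}}N\) in \(D_{C_j}^r\) contains \(N\), so it is dense in \(D_{C_j}N\). Hence the infimum may be taken over \(D_{C_j}N\) instead, and \(q_j\) is exactly the pullback of the quotient norm on \(D_{C_j}^r/D_{C_j}N\).

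This reuses the density fact you already needed for the lemma, so it costs nothing extra. It also gives more than a homeomorphism: the two defining families of seminorms coincide index by index, with no Baire-category argument and no dependence on the valuation being nontrivial. Your route is quicker to state, but it uses heavier machinery and yields less information.
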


\begin{proof}
For every \(j\), Theorem~\ref{thm:compact-flat} and
Proposition~\ref{prop:compact-relations} yield an exact sequence of Banach
spaces
\[
0\longrightarrow D_{C_j}N\longrightarrow D_{C_j}^r
\longrightarrow D_{C_j}\otimes_DM\longrightarrow0.
\]
The transition maps \(D_{C_{j+1}}N\to D_{C_j}N\) have dense image, since
their images contain \(N\), which is dense in \(D_{C_j}N\).
Lemma~\ref{lem:inverse-limit} therefore yields an exact sequence
\[
0\longrightarrow\varprojlim_jD_{C_j}N
\longrightarrow(D_\Delta^{\mathrm{Fr}})^r
\longrightarrow\varprojlim_j(D_{C_j}\otimes_DM)\longrightarrow0.
\]
By Proposition~\ref{prop:frechet-relations}, the first term is
\(D_\Delta^{\mathrm{Fr}}N\). Together with
\eqref{eq:frechet-presentation}, this proves that
\eqref{eq:finite-module-limit} is an algebraic isomorphism.

It remains to compare the topologies. The quotient topology on
\((D_\Delta^{\mathrm{Fr}})^r/D_\Delta^{\mathrm{Fr}}N\) is defined by the
seminorms
\[
q_j(\bar x)=\inf_{y\in D_\Delta^{\mathrm{Fr}}N}\|x+y\|_{C_j}.
\]
Let \(x_j\) denote the image of \(x\) in \(D_{C_j}^r\). The image of
\(D_\Delta^{\mathrm{Fr}}N\) in \(D_{C_j}^r\) contains \(N\), and is
therefore dense in \(D_{C_j}N\). Consequently,
\[
q_j(\bar x)=\inf_{y\in D_{C_j}N}\|x_j+y\|_{C_j},
\]
which is the quotient norm on
\(D_{C_j}^r/D_{C_j}N\simeq D_{C_j}\otimes_DM\). Thus the quotient
topology in \eqref{eq:frechet-presentation} agrees with the projective-limit
topology in \eqref{eq:finite-module-limit}.
\end{proof}

The canonical image of \(M\) in \(D_\Delta^{\mathrm{Fr}}\otimes_DM\) is
dense, since \(D^r\) is dense in \((D_\Delta^{\mathrm{Fr}})^r\). It need
not be injective.

\begin{corollary}\label{cor:exact-base-change}
For every nonempty \(\Gamma\)-rational polyhedron
\(\Delta\subseteq\Omega\), the functor
\(D_\Delta^{\mathrm{Fr}}\otimes_D-\) is exact on left \(D\)-modules.
For finitely generated \(D\)-modules, its values are Hausdorff Fr\'echet
modules and satisfy the projective-limit description
\eqref{eq:finite-module-limit}.
\end{corollary}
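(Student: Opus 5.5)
Exactness of \(D_\Delta^{\mathrm{Fr}}\otimes_D-\) on all left \(D\)-modules is the standard reformulation of flatness. By Theorem~\ref{thm:frechet-flat}, \(D_\Delta^{\mathrm{Fr}}\) is flat as a right \(D\)-module. Since tensor product is always right exact, it remains to check that \(D_\Delta^{\mathrm{Fr}}\otimes_D-\) preserves injections. For an arbitrary injection \(M'\hookrightarrow M\) of left \(D\)-modules this is exactly right flatness, which Theorem~\ref{thm:frechet-flat} obtained from the left-ideal criterion. No finiteness assumption is needed for this part: flatness via the ideal criterion already gives exactness on all modules, because tensor products commute with filtered colimits and every module is a filtered colimit of finitely presented ones.

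For finitely generated \(M\), choose a finite presentation \(M\simeq D^r/N\). Since \(D\) is left Noetherian, \(N\) is finitely generated, so Proposition~\ref{prop:frechet-relations} applies to \(N\). The identification \eqref{eq:frechet-presentation} then exhibits \(D_\Delta^{\mathrm{Fr}}\otimes_DM\) as \((D_\Delta^{\mathrm{Fr}})^r/\overline N\). This is the quotient of a Fr\'echet space by a closed subspace, and is therefore Hausdorff and Fr\'echet: metrizability and completeness pass to quotients by closed subspaces of countable projective limits of Banach spaces. Alternatively, completeness can be read off from the projective-limit description, since a projective limit of Banach spaces is complete. As noted after \eqref{eq:frechet-presentation}, this topology does not depend on the chosen presentation.

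Finally, the projective-limit description \eqref{eq:finite-module-limit} is Proposition~\ref{prop:finite-module-limit}. That proposition is proved for the \(\Gamma\)-rational exhaustions fixed in Section~\ref{subsec:polyhedral-background}. Independence of the exhaustion follows by cofinality, exactly as in Proposition~\ref{prop:frechet-completion}: two cofinal exhaustions give cofinal projective systems, hence isomorphic limits.

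No step is a genuine obstacle, since all the work is contained in Theorem~\ref{thm:frechet-flat} and Propositions~\ref{prop:frechet-relations} and~\ref{prop:finite-module-limit}. The only care needed is bookkeeping: check that the topological isomorphism in Proposition~\ref{prop:finite-module-limit} is compatible with the Hausdorff Fr\'echet structure coming from \eqref{eq:frechet-presentation}, so that both descriptions define the same topological module.
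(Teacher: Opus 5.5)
Your proposal is correct and follows the paper's proof: exactness comes directly from the flatness in Theorem~\ref{thm:frechet-flat}, and the Hausdorff Fr\'echet structure and projective-limit description come from \eqref{eq:frechet-presentation} and Proposition~\ref{prop:finite-module-limit}. Your extra remark on independence of the exhaustion is harmless but unnecessary, since Proposition~\ref{prop:finite-module-limit} is already stated for an arbitrary nested cofinal exhaustion by compact \(\Gamma\)-rational polyhedra.
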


\begin{proof}
Exactness follows from Theorem~\ref{thm:frechet-flat}. The remaining
assertions follow from \eqref{eq:frechet-presentation} and
Proposition~\ref{prop:finite-module-limit}.
\end{proof}

\section{Toric-face degeneration and reduction}\label{sec:integral-models}

Section~\ref{sec:flatness} associates a finite-type \(R\)-model
\(A_\Delta\) with every compact \(\Gamma\)-rational polyhedron. Recall
that, for \(m\in\mathbb N^{2n}\),
\[
\lambda_\Delta(m)=\min_{w\in\Delta}\langle w,m\rangle,
\qquad a_\Delta(m)=\left\lceil-\lambda_\Delta(m)\right\rceil.
\]
We now impose the stronger hypotheses that \(\Delta\) has integral vertices
and lies in \(\Omega^\circ\). Under these assumptions,
\(\lambda_\Delta(m)\in\mathbb Z\) and
\(a_\Delta(m)=-\lambda_\Delta(m)\), and the special fiber of
\(A_\Delta\) is described by the normal fan of its upper hull.

Throughout this section, \(K\) is complete and discretely valued, with
\(R\), \(\pi\), and \(k\) as in Section~\ref{sec:flatness}, and
\(\Delta\subseteq\Omega^\circ\) is a nonempty compact polyhedron with
integral vertices.

\subsection{The toric-face special fiber}

Recall from \eqref{eq:upper-saturation} that
\(\Delta^+=\Delta+\mathbb R_{\geq0}^{2n}\). Let
\(\Sigma_{\Delta^+}\) be its minimum normal fan. Since
\(\operatorname{rec}(\Delta^+)=\mathbb R_{\geq0}^{2n}\),
Section~\ref{subsec:polyhedral-background} implies
\(|\Sigma_{\Delta^+}|=\mathbb R_{\geq0}^{2n}\). For a nonempty face
\(F\) of \(\Delta^+\) and \(w\in\operatorname{relint}(F)\), its
normal cone is
\[
N_{\Delta^+}(F)=\{m\in\mathbb R_{\geq0}^{2n}:
\langle w,m\rangle=\lambda_\Delta(m)\}.
\]
Here \(\lambda_\Delta(m)=\min_{w\in\Delta}\langle w,m\rangle\) is
understood on all of \(\mathbb R_{\geq0}^{2n}\).

For \(m=(\alpha,\beta)\in\mathbb N^{2n}\), put
\[
e_m=\pi^{-\lambda_\Delta(m)}x^\alpha\partial^\beta\in A_\Delta.
\]
Let \(k[\Sigma_{\Delta^+}]\) denote the associated toric face ring. It has
\(k\)-basis \(z^m\), \(m\in\mathbb N^{2n}\), with multiplication
\[
z^mz^{m'}=\begin{cases}
z^{m+m'},&\text{if }m,m'\text{ lie in a common cone of }\Sigma_{\Delta^+},\\
0,&\text{otherwise}.
\end{cases}
\]
See \cite[Section~1]{IchimRomer} for toric face rings.

\begin{theorem}\label{thm:toric-face-reduction}
There are canonical isomorphisms
\[
A_\Delta/\pi A_\Delta\simeq D_\Delta^\circ/\pi D_\Delta^\circ
\simeq k[\Sigma_{\Delta^+}],
\]
under which \(z^m\) corresponds to the reduction \(\overline e_m\).
\end{theorem}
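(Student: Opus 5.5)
The first isomorphism \(A_\Delta/\pi A_\Delta\simeq D_\Delta^\circ/\pi D_\Delta^\circ\) is already contained in the proof of Proposition~\ref{prop:integral-completion}: there we showed that \(A_\Delta\cap\pi D_\Delta^\circ=\pi A_\Delta\) and that \(A_\Delta\) is dense in \(D_\Delta^\circ\), which gives the isomorphism at \(\ell=1\). The work is therefore in computing \(A_\Delta/\pi A_\Delta\).

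Since the vertices are integral, \(a_\Delta(m)=-\lambda_\Delta(m)\in\mathbb Z\). By the coefficient description of \(A_\Delta\), the elements \(e_m\) form an \(R\)-basis of \(A_\Delta\), and the \(\pi e_m\) form an \(R\)-basis of \(\pi A_\Delta\). Hence the classes \(\overline e_m\), \(m\in\mathbb N^{2n}\), form a \(k\)-basis of \(A_\Delta/\pi A_\Delta\). We define the \(k\)-linear bijection \(z^m\mapsto\overline e_m\). It remains to check that it is multiplicative, that is, to compute \(e_me_{m'}\) modulo \(\pi A_\Delta\).

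Write \(m=(\alpha,\beta)\) and \(m'=(\alpha',\beta')\). Reordering \(x^\alpha\partial^\beta x^{\alpha'}\partial^{\beta'}\) gives
\[
x^{\alpha+\alpha'}\partial^{\beta+\beta'}+\sum_{\gamma\neq0}c_\gamma\, x^{\alpha+\alpha'-\gamma}\partial^{\beta+\beta'-\gamma},
\]
where the coefficients \(c_\gamma\) are integers and \(0\neq\gamma\leq\min(\alpha',\beta)\) componentwise. This gives
\[
e_me_{m'}=\pi^{\lambda_\Delta(m+m')-\lambda_\Delta(m)-\lambda_\Delta(m')}\,e_{m+m'}+\sum_{\gamma}c_\gamma\,\pi^{s_\gamma}e_{m+m'-(\gamma,\gamma)}.
\]
For the principal term we use that \(\lambda_\Delta\) is concave and piecewise linear, so \(\lambda_\Delta(m+m')\geq\lambda_\Delta(m)+\lambda_\Delta(m')\). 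Equality holds exactly when some \(w\in\Delta\) minimizes both \(\langle\cdot,m\rangle\) and \(\langle\cdot,m'\rangle\). Because the minimum over \(\Delta\) of a nonnegative functional equals its minimum over \(\Delta^+\), this is the same as \(m\) and \(m'\) lying in a common normal cone \(N_{\Delta^+}(F)\) of \(\Sigma_{\Delta^+}\). The exponents are integers, so the principal term reduces to \(\overline e_{m+m'}\) in that case and to \(0\) otherwise.

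The main obstacle is showing that every correction term vanishes mod \(\pi\). Its exponent is
\[
s_\gamma=\lambda_\Delta\bigl(m+m'-(\gamma,\gamma)\bigr)-\lambda_\Delta(m)-\lambda_\Delta(m'),
\]
and we need \(s_\gamma\geq1\); this is where the hypothesis \(\Delta\subseteq\Omega^\circ\) enters. Pick \(w\in\Delta\) attaining \(\lambda_\Delta(m+m'-(\gamma,\gamma))\). Then
\[
\lambda_\Delta\bigl(m+m'-(\gamma,\gamma)\bigr)=\langle w,m+m'\rangle-\langle u+v,\gamma\rangle\geq\lambda_\Delta(m)+\lambda_\Delta(m')-\langle u+v,\gamma\rangle.
\]
Since \(u_i+v_i<0\) for every \(i\) and \(\gamma\neq0\) has nonnegative entries, \(-\langle u+v,\gamma\rangle>0\). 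Integrality of the vertices makes every \(\lambda_\Delta\) value an integer, so the strict inequality \(s_\gamma>0\) upgrades to \(s_\gamma\geq1\). The correction terms therefore lie in \(\pi A_\Delta\).

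Combining the two computations, \(\overline e_m\,\overline e_{m'}\) is \(\overline e_{m+m'}\) when \(m\) and \(m'\) share a cone and \(0\) otherwise, which is the multiplication rule of \(k[\Sigma_{\Delta^+}]\). This proves both isomorphisms.
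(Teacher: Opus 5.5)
Your proposal is correct and follows the paper's proof essentially step by step. It takes the first isomorphism from Proposition~\ref{prop:integral-completion}, uses the $R$-basis $e_m$, applies superadditivity of $\lambda_\Delta$ to the principal term, and bounds the commutator corrections by $\pi$-divisibility using $\Delta\subseteq\Omega^\circ$. The only cosmetic difference is in that last bound: you get $s_\gamma>0$ from a minimizer lying in $\Omega^\circ$ and then round up by integrality of the $\lambda_\Delta$-values, whereas the paper uses $w_i+w_{n+i}\leq-1$ at the integral vertices to get the bound $\geq1$ directly.
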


\begin{proof}
Proposition~\ref{prop:integral-completion} identifies \(D_\Delta^\circ\)
with the \(\pi\)-adic completion of \(A_\Delta\), and therefore
\(A_\Delta/\pi A_\Delta\simeq D_\Delta^\circ/\pi D_\Delta^\circ\).
The coefficient description of \(A_\Delta\) shows that the \(e_m\) form
an \(R\)-basis of \(A_\Delta\). Their reductions \(\overline e_m\)
therefore form a \(k\)-basis of \(A_\Delta/\pi A_\Delta\).

For \(m,m'\in\mathbb N^{2n}\), the definition of \(\lambda_\Delta\)
implies \(\lambda_\Delta(m+m')\geq\lambda_\Delta(m)+\lambda_\Delta(m')\).
The term of exponent \(m+m'\) in \(e_me_{m'}\) consequently has
coefficient
\[
\pi^{\lambda_\Delta(m+m')-\lambda_\Delta(m)-\lambda_\Delta(m')}
\]
with respect to \(e_{m+m'}\). Its reduction is \(1\) precisely when
equality holds, equivalently when \(m\) and \(m'\) lie in a common cone
of \(\Sigma_{\Delta^+}\); otherwise it reduces to \(0\).

Every other ordered monomial occurring in
\(x^\alpha\partial^\beta x^{\alpha'}\partial^{\beta'}\) has exponent
\(m''=m+m'-(q,q)\), where \(q\in\mathbb N^n\) is nonzero.
Since \(\Delta\subseteq\Omega^\circ\) and its vertices are integral,
every vertex \(w\) satisfies \(w_i+w_{n+i}\leq-1\). Hence
\[
\lambda_\Delta(m'')\geq\lambda_\Delta(m+m')+1
\geq\lambda_\Delta(m)+\lambda_\Delta(m')+1.
\]
The coefficients in the ordered product are integers, so their absolute
values are at most one. Thus every commutator correction is divisible by
\(\pi\) after normalization and vanishes in the special fiber.
Consequently,
\[
\overline e_m\,\overline e_{m'}=\begin{cases}
\overline e_{m+m'},&\text{if }m,m'\text{ lie in a common cone of }\Sigma_{\Delta^+},\\
0,&\text{otherwise}.
\end{cases}
\]
Hence \(z^m\mapsto\overline e_m\) defines the asserted isomorphism.
\end{proof}

\begin{remark}\label{rem:rational-special-fiber}
The rational model of Section~\ref{sec:flatness} need not have a toric-face
special fiber. For example, take \(n=1\) and
\(\Delta=\{(-1/2,-1/2)\}\). Then \(\Sigma_{\Delta^+}\) has a single
maximal cone, whereas in \(A_\Delta/\pi A_\Delta\) the nonzero classes
of \(\pi x\), \(\pi\partial\), and \(\pi x\partial\) satisfy
\[
\overline{\pi x}\,\overline{\pi\partial}=0,
\qquad\overline{\pi x\partial}\neq0.
\]
Thus the rounding in \(a_\Delta(m)=\lceil-\lambda_\Delta(m)\rceil\)
changes the multiplication in the special fiber.
\end{remark}

\subsection{Reduction and toric strata}

For a cone \(\sigma\in\Sigma_{\Delta^+}\), let
\(\mathfrak p_\sigma\subseteq k[\Sigma_{\Delta^+}]\) be the monomial prime
generated by the \(z^m\) with \(m\notin\sigma\).

Following the construction recalled in
Section~\ref{subsec:toric-background}, we define a reduction map.
For \(\eta\in\mathcal M(D_\Delta)\), the set
\(\{f\in D_\Delta^\circ:\eta(f)<1\}\) is a prime ideal of
\(D_\Delta^\circ\) containing \(\pi D_\Delta^\circ\). By
Theorem~\ref{thm:toric-face-reduction}, it determines a prime of
\(k[\Sigma_{\Delta^+}]\). We define
\[
\operatorname{red}_\Delta:\mathcal M(D_\Delta)
\longrightarrow\operatorname{Spec}k[\Sigma_{\Delta^+}],
\qquad\operatorname{red}_\Delta(\eta)=\{\overline f:\eta(f)<1\}.
\]

\begin{proposition}\label{prop:reduction-anticontinuous}
The map \(\operatorname{red}_\Delta\) is anticontinuous.
\end{proposition}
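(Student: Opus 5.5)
Anticontinuity means that the preimage under \(\operatorname{red}_\Delta\) of every Zariski-open subset of \(\operatorname{Spec}k[\Sigma_{\Delta^+}]\) is closed in \(\mathcal M(D_\Delta)\). Equivalently, preimages of Zariski-closed sets are open. The plan is to reduce to basic open sets and then use the pointwise-convergence topology on \(\mathcal M(D_\Delta)\).

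The basic open sets \(D(\overline f)=\{\mathfrak p:\overline f\notin\mathfrak p\}\) form a base of the Zariski topology. Here \(\overline f\) ranges over classes in \(k[\Sigma_{\Delta^+}]\simeq D_\Delta^\circ/\pi D_\Delta^\circ\), as in Theorem~\ref{thm:toric-face-reduction}. Although the ring is noncommutative in general, it is a toric face ring, hence commutative, so \(\operatorname{Spec}\) has its usual meaning. A union of closed sets need not be closed, so it does not suffice to treat basic opens alone. Instead, for a closed subset \(V(J)=\bigcap_{\overline f\in J}V(\overline f)\), we have
\[
\operatorname{red}_\Delta^{-1}(V(J))=\bigcap_{\overline f\in J}\operatorname{red}_\Delta^{-1}(V(\overline f)).
\]
An intersection of open sets need not be open, so this approach also has a gap. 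It is cleaner to use that \(k[\Sigma_{\Delta^+}]\) is Noetherian, being a finitely generated \(k\)-algebra by Proposition~\ref{prop:integral-model}. Then every ideal \(J\) is finitely generated, say by \(\overline f_1,\dots,\overline f_s\), and \(V(J)=\bigcap_{i=1}^sV(\overline f_i)\) is a finite intersection. Thus it suffices to show that each \(\operatorname{red}_\Delta^{-1}(V(\overline f))\) is open.

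Fix a lift \(f\in D_\Delta^\circ\) of \(\overline f\). By definition,
\[
\operatorname{red}_\Delta^{-1}(V(\overline f))=\{\eta:\overline f\in\operatorname{red}_\Delta(\eta)\}.
\]
We must check that this membership does not depend on the lift. If \(f'=f+\pi g\) with \(g\in D_\Delta^\circ\), then \(\eta(\pi g)\leq|\pi|<1\), since \(\eta\) is contractive by Section~\ref{subsec:banach-frechet-background}. Hence \(\eta(f)<1\) if and only if \(\eta(f')<1\). Consequently
\[
\operatorname{red}_\Delta^{-1}(V(\overline f))=\{\eta\in\mathcal M(D_\Delta):\eta(f)<1\}.
\]
The evaluation \(\eta\mapsto\eta(f)\) is continuous for pointwise convergence, so this set is open.

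Before this, one should confirm that \(\operatorname{red}_\Delta\) is well defined, as asserted before the statement. The set \(\mathfrak q_\eta=\{f\in D_\Delta^\circ:\eta(f)<1\}\) is a two-sided ideal by the ultrametric inequality and multiplicativity. It is completely prime because \(\eta(fg)=\eta(f)\eta(g)<1\) forces one factor to have value less than \(1\), as both factors have value at most \(1\). It contains \(\pi\). Its image is therefore a prime ideal of the commutative quotient.

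The only delicate point is the finite-generation step. Noetherianity of \(k[\Sigma_{\Delta^+}]\) follows from the Gordan-lemma argument for \(A_\Delta\) given in Proposition~\ref{prop:integral-model}, since \(A_\Delta\) is Noetherian and its quotient \(A_\Delta/\pi A_\Delta\) is then Noetherian as well. Combining the three steps shows that the preimage of every closed set is open, which is exactly anticontinuity.
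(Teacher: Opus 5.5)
Your proposal is correct and follows essentially the same route as the paper: you use Noetherianity of \(k[\Sigma_{\Delta^+}]\) to write a closed set as \(V(\overline f_1,\ldots,\overline f_s)\), and you identify its preimage with the finite intersection of the open sets \(\{\eta:\eta(f_i)<1\}\). Your additional checks, that membership \(\overline f\in\operatorname{red}_\Delta(\eta)\) does not depend on the lift \(f\) and that \(\operatorname{red}_\Delta(\eta)\) is a prime ideal, are details the paper leaves implicit.
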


\begin{proof}
Let \(V(I)\) be a closed subset of the special fiber. Since
\(k[\Sigma_{\Delta^+}]\) is Noetherian, write
\(I=(\overline f_1,\ldots,\overline f_s)\), with
\(f_i\in D_\Delta^\circ\). Then
\[
\operatorname{red}_\Delta^{-1}(V(I))=
\bigcap_{i=1}^s\{\eta\in\mathcal M(D_\Delta):\eta(f_i)<1\},
\]
which is open.
\end{proof}

Recall from Theorem~\ref{thm:compact-spectrum} that
\(\mathcal M(D_\Delta)\simeq U_{\Delta^\uparrow}\), where
\(\Delta^\uparrow=\Delta^+\cap\Omega\).
Thus, if \(\eta\in\mathcal M(D_\Delta)\) and
\(w=\operatorname{trop}(\eta)\), then \(w\in\Delta^\uparrow\).
For such \(w\), set
\begin{equation}\label{eq:normal-cone-at-weight}
\sigma_w=\{m\in\mathbb R_{\geq0}^{2n}:
\langle w,m\rangle=\lambda_\Delta(m)\}.
\end{equation}
If \(F\) is the unique face of \(\Delta^+\)
whose relative interior contains \(w\), then
\(\sigma_w=N_{\Delta^+}(F)\) by Section~\ref{subsec:polyhedral-background}.
We also write \(\sigma_F=N_{\Delta^+}(F)\).

Recall also that the Gauss section \(s:\Omega\to\mathcal S(D)\) is
defined on \(D\) by
\[
s(w)\!\left(\sum_{\alpha,\beta}c_{\alpha,\beta}
x^\alpha\partial^\beta\right)
=\max_{\alpha,\beta}|c_{\alpha,\beta}|
e^{-\langle w,(\alpha,\beta)\rangle}.
\]
For \(w\in\Delta^\uparrow\), it extends to a point of
\(\mathcal M(D_\Delta)\).

\begin{samepage}
\begin{theorem}\label{thm:reduction-map}
Let \(\eta\in\mathcal M(D_\Delta)\) and put
\(w=\operatorname{trop}(\eta)\).
\begin{enumerate}[label=\textup{(\roman*)}]
\item A monomial \(z^m\) belongs to \(\operatorname{red}_\Delta(\eta)\)
if and only if \(m\notin\sigma_w\).
\item For the Gauss point \(s(w)\), one has
\(\operatorname{red}_\Delta(s(w))=\mathfrak p_{\sigma_w}\).
\end{enumerate}
\end{theorem}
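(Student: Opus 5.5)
The plan is to compute $\eta$ on the basis elements $e_m$ of Theorem~\ref{thm:toric-face-reduction}, and then use the coefficient description of $D_\Delta^\circ$ to handle general elements. Throughout, $w=\operatorname{trop}(\eta)\in\Delta^\uparrow\subseteq\Delta^+$ by Theorem~\ref{thm:compact-spectrum}. We also use that $\{f\in D_\Delta^\circ:\eta(f)<1\}$ contains $\pi D_\Delta^\circ$. Consequently, a class $\overline g$ lies in $\operatorname{red}_\Delta(\eta)$ if and only if one, equivalently every, lift $g$ satisfies $\eta(g)<1$. Indeed, any two lifts differ by $\pi h$ with $\eta(\pi h)\leq|\pi|<1$, and the strict nonarchimedean inequality then gives equal values whenever one value is $\geq 1$.

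For (i), take the lift $e_m=\pi^{-\lambda_\Delta(m)}x^\alpha\partial^\beta$ of $z^m$. Multiplicativity of $\eta$ and the definition of $\operatorname{trop}$ give
\[
\eta(e_m)=|\pi|^{-\lambda_\Delta(m)}\prod_i\eta(x_i)^{\alpha_i}\eta(\partial_i)^{\beta_i}
=\exp\bigl(\lambda_\Delta(m)-\langle w,m\rangle\bigr).
\]
Since $w\in\Delta^+=\Delta+\mathbb R_{\geq0}^{2n}$ and $m\geq0$, we have $\langle w,m\rangle\geq\lambda_\Delta(m)$. Equality holds exactly when $m\in\sigma_w$, by \eqref{eq:normal-cone-at-weight}. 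Hence $\eta(e_m)<1$ if and only if $m\notin\sigma_w$, which together with the lift-independence above proves (i).

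For (ii), part (i) already shows that every generator $z^m$ of $\mathfrak p_{\sigma_w}$ with $m\notin\sigma_w$ lies in $\operatorname{red}_\Delta(s(w))$, so $\mathfrak p_{\sigma_w}\subseteq\operatorname{red}_\Delta(s(w))$. For the reverse inclusion, I would first record that $\mathfrak p_{\sigma_w}$ is exactly the $k$-span of the $z^m$ with $m\notin\sigma_w$. This holds because that span is already an ideal. If $m\notin\sigma_w$, then $z^mz^{m'}$ is either $0$ or $z^{m+m'}$, the latter only when $m,m'$ lie in a common cone. In that case $m+m'\notin\sigma_w$, since $\sigma_w$ is a face of the cone $\{m:\langle w,m\rangle=\lambda_\Delta(m)\}$-type structure: equality $\langle w,m+m'\rangle=\lambda_\Delta(m+m')$ combined with $\langle w,\cdot\rangle\geq\lambda_\Delta$ and superadditivity of $\lambda_\Delta$ forces equality for $m$ itself.

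Now let $f\in D_\Delta^\circ$ with $s(w)(f)<1$. By Proposition~\ref{prop:series-descriptions} and the integral-vertex hypothesis, write $f=\sum_m b_me_m$ with $b_m\in R$. The value $s(w)(f)$ is the weighted-coefficient supremum
\[
s(w)(f)=\sup_m|b_m|\exp\bigl(\lambda_\Delta(m)-\langle w,m\rangle\bigr).
\]
This follows by continuity of $s(w)$ on $D_\Delta$ (as in Corollary~\ref{cor:proper-norming}), since $|\cdot|_w$ on $D_w$ is given by the sup of weighted coefficients. Every $m\in\sigma_w$ contributes $|b_m|$ to this supremum, so $b_m\in\pi R$ for all such $m$. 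Therefore $\overline f=\sum_{m\notin\sigma_w}\overline b_m z^m$. Only finitely many $\overline b_m$ are nonzero, because $|b_m|e^{\lambda_\Delta(m)}\to0$ in the unit ball. Hence $\overline f\in\mathfrak p_{\sigma_w}$.

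The step I expect to need the most care is this last one. It requires justifying the coefficientwise formula for $s(w)$ on infinite series, and confirming that the reduction of $f$ is the finite sum of the $\overline b_mz^m$. The tool for both is the $\pi$-adic identification of Proposition~\ref{prop:integral-completion}, applied through finite truncations.
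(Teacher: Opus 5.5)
Your proposal is correct and is essentially the paper's proof. Part (i) uses $\eta(e_m)=e^{\lambda_\Delta(m)-\langle w,m\rangle}$ together with $w\in\Delta^\uparrow$, and part (ii) uses the expansion $f=\sum_m b_me_m$ and the coefficientwise formula for $s(w)(f)$.

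Two minor slips do not affect the argument:

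\begin{enumerate}[label=\textup{(\arabic*)}]
\item Your ``span is an ideal'' step is unnecessary, since both inclusions already follow from $\operatorname{red}_\Delta(s(w))$ being an ideal. Its stated justification is also wrong. Superadditivity of $\lambda_\Delta$ alone does not force $m\in\sigma_w$: in the two-vertex example with $w=w_1$, $m=(0,1)$ and $m'=(2,0)$, one has $m+m'\in\sigma_{w_1}$ but $m\notin\sigma_{w_1}$. What you need is the additivity $\lambda_\Delta(m+m')=\lambda_\Delta(m)+\lambda_\Delta(m')$, which holds when $m,m'$ lie in a common cone.
\item The finiteness of $\overline f$ comes from $|b_m|\to0$, not from $|b_m|e^{\lambda_\Delta(m)}\to0$.
\end{enumerate}
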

\end{samepage}

\begin{proof}
For \(m=(\alpha,\beta)\in\mathbb N^{2n}\), we have
\[
\eta(e_m)=e^{\lambda_\Delta(m)-\langle w,m\rangle}.
\]
Since \(w\in\Delta^\uparrow\), we have
\(\langle w,m\rangle\geq\lambda_\Delta(m)\). Thus
\(\eta(e_m)<1\) precisely when \(m\notin\sigma_w\). Since \(e_m\)
reduces to \(z^m\), this proves (i).

For (ii), let \(f\in D_\Delta^\circ\). Write \(f=\sum_m b_me_m\),
where \(b_m\in R\) and \(|b_m|\to0\) as
\(|\alpha|+|\beta|\to\infty\), with \(m=(\alpha,\beta)\). Then
\(\overline f=\sum_{|b_m|=1}\overline b_mz^m\), and the Gauss
formula implies
\[
s(w)(f)=\sup_m|b_m|e^{\lambda_\Delta(m)-\langle w,m\rangle}.
\]
The exponential factor is at most \(1\), with equality precisely for
\(m\in\sigma_w\). Indeed, if a monomial occurring in \(\overline f\)
has exponent in \(\sigma_w\), then \(s(w)(f)=1\). Conversely, if no
such monomial has exponent in \(\sigma_w\), the finitely many terms
with \(|b_m|=1\) have value strictly less than \(1\), while all remaining
terms have value at most \(e^{-1}<1\). Hence \(s(w)(f)<1\).
Therefore \(\operatorname{red}_\Delta(s(w))=\mathfrak p_{\sigma_w}\).
\end{proof}

\begin{corollary}\label{cor:gauss-toric-strata}
The assignment \(F\mapsto\mathfrak p_{\sigma_F}\) is an
inclusion-preserving bijection from the nonempty faces of
\(\Delta^+\) to the torus-invariant primes of
\(k[\Sigma_{\Delta^+}]\). Moreover, if
\(w\in\operatorname{relint}(F)\cap\Delta^\uparrow\), then
\(\operatorname{red}_\Delta(s(w))=\mathfrak p_{\sigma_F}\).
\end{corollary}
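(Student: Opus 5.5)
The plan is to build the bijection in two steps and then read off the second assertion from Theorem~\ref{thm:reduction-map}. The first step is the standard order-reversing bijection \(F\mapsto\sigma_F=N_{\Delta^+}(F)\) between nonempty faces of \(\Delta^+\) and cones of \(\Sigma_{\Delta^+}\) (Section~\ref{subsec:polyhedral-background}). If \(F\subseteq G\), then \(\sigma_F\supseteq\sigma_G\). The second step is the assignment \(\sigma\mapsto\mathfrak p_\sigma\), which is also order-reversing, since a larger cone excludes fewer monomials. The composite is therefore inclusion-preserving. Here ``torus-invariant'' means homogeneous for the \(\mathbb Z^{2n}\)-grading \(\deg z^m=m\) of \(k[\Sigma_{\Delta^+}]\). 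Such a prime is therefore generated by the monomials it contains.

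First I will check that each \(\mathfrak p_\sigma\) is a prime. The monomials \(z^m\) with \(m\notin\sigma\) span an ideal: if \(m\notin\sigma\), then \(z^mz^{m'}\) is either \(0\) or \(z^{m+m'}\) with \(m,m'\) in a common cone \(\tau\). In the latter case \(m+m'\notin\sigma\), because \(\sigma\cap\tau\) is a face of \(\tau\) and faces are closed under ``summands''. The quotient is then the monoid algebra \(k[\sigma\cap\mathbb N^{2n}]\), since any two lattice points of \(\sigma\) lie in the common cone \(\sigma\). This algebra is a domain.

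Injectivity holds because \(\sigma\cap\mathbb N^{2n}=\{m:z^m\notin\mathfrak p_\sigma\}\), and \(\sigma\) is a rational cone. Rationality follows because \(\Delta^+\) is defined by integral data, \(\Delta\) having integral vertices. A rational cone is the closed cone spanned by its lattice points, so \(\mathfrak p_\sigma\) recovers \(\sigma\).

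The main obstacle is surjectivity, namely showing that every graded prime \(\mathfrak p\) equals some \(\mathfrak p_\sigma\). Put \(S=\{m\in\mathbb N^{2n}:z^m\notin\mathfrak p\}\). This set is nonempty, since \(1\notin\mathfrak p\). Primality gives three properties.
\begin{enumerate}[label=\textup{(\alph*)}]
\item Any two elements of \(S\) lie in a common cone, since otherwise \(z^mz^{m'}=0\in\mathfrak p\).
\item \(S\) is closed under addition.
\item If \(m,m'\) lie in a common cone and \(m+m'\in S\), then \(m,m'\in S\).
\end{enumerate}
Choose finitely many generators of \(S\) as a monoid, which is possible by Gordan's lemma applied cone by cone. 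By (a) and (b), their sum \(m_0\) lies in \(S\). Let \(\sigma\) be the unique cone with \(m_0\in\operatorname{relint}\sigma\). Every \(m\in S\) lies in a cone together with \(m_0\), and hence in \(\sigma\); thus \(S\subseteq\sigma\).

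For the converse inclusion, take \(m\in\sigma\cap\mathbb N^{2n}\). Because \(m_0\) is in the relative interior, some multiple \(Nm_0-m\) is again a lattice point of \(\sigma\). Property (b) gives \(Nm_0\in S\), and then (c), applied with the common cone \(\sigma\), yields \(m\in S\). Hence \(S=\sigma\cap\mathbb N^{2n}\) and \(\mathfrak p=\mathfrak p_\sigma\).

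Finally, let \(w\in\operatorname{relint}(F)\cap\Delta^\uparrow\). Then \(\sigma_w=N_{\Delta^+}(F)=\sigma_F\) by the remark following \eqref{eq:normal-cone-at-weight}. Theorem~\ref{thm:reduction-map}(ii) then gives \(\operatorname{red}_\Delta(s(w))=\mathfrak p_{\sigma_w}=\mathfrak p_{\sigma_F}\).
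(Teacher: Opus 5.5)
\textbf{Gap in the surjectivity step.} Your primality and injectivity arguments are fine, but the surjectivity step has a gap at two points.

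First, ``Gordan's lemma applied cone by cone'' gives finite generation of \(\tau\cap\mathbb Z^{2n}\), not of the submonoid \(S\cap\tau\). Submonoids of affine monoids need not be finitely generated; for instance, \(\{0\}\cup\{(a,b)\in\mathbb N^2:a\geq1\}\) is not. Knowing that \(S\cap\tau\) is a face of \(\tau\cap\mathbb Z^{2n}\) is essentially what you are trying to prove.

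Second, ``every \(m\in S\) lies in a cone together with \(m_0\), and hence in \(\sigma\)'' does not follow as written. A common cone \(\tau\) of \(m\) and \(m_0\) only contains \(\sigma\) as a face. Nothing you state rules out \(m\) lying in the interior of \(\tau\) while \(m_0\) lies on a ray of \(\tau\).

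\textbf{Repair.} Both issues disappear if you drop finite generation. Since the fan is finite, choose \(m_0\in S\) whose carrier cone \(\sigma\) (the cone containing \(m_0\) in its relative interior) is inclusion-maximal among carrier cones of elements of \(S\). Now take \(m\in S\).
\begin{enumerate}[label=\textup{(\arabic*)}]
\item By (b), \(m+m_0\in S\), and by (a) there is a cone \(\tau\) containing \(m\) and \(m_0\).
\item The carrier cone \(\sigma'\) of \(m+m_0\) is a face of \(\tau\), so by the summand property it contains both \(m\) and \(m_0\).
\item Hence \(\sigma\subseteq\sigma'\), and maximality forces \(\sigma'=\sigma\), so \(m\in\sigma\).
\end{enumerate}
Your argument for \(\sigma\cap\mathbb N^{2n}\subseteq S\), using \(Nm_0-m\in\sigma\) and property (c), then applies unchanged.

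\textbf{Comparison with the paper.} With that repair your proof is complete, and it is more self-contained than the paper's. The paper takes as known the standard fact that the torus-invariant primes of \(k[\Sigma_{\Delta^+}]\) are exactly the \(\mathfrak p_{\sigma_F}\). It proves neither primality nor the classification, whereas you establish both directly.

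The paper does include one point you omit. Every nonempty face \(F\) of \(\Delta^+\) meets \(\Delta\subseteq\Omega^\circ\), so \(\operatorname{relint}(F)\cap\Delta^\uparrow\neq\varnothing\). This is not needed for the statement as written. It does show that the second assertion is never vacuous, so every torus-invariant prime is actually the reduction of some Gauss point.
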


\begin{proof}
The cones of \(\Sigma_{\Delta^+}\) are precisely the normal cones
\(\sigma_F\), and the torus-invariant primes of \(k[\Sigma_{\Delta^+}]\)
are precisely the corresponding \(\mathfrak p_{\sigma_F}\). Every nonempty
face \(F\) of \(\Delta^+\) meets \(\Delta\): if a linear functional exposes
\(F\), then it is nonnegative on \(\operatorname{rec}(\Delta^+)\), and a
minimizing point of \(\Delta\) also lies in \(F\). Since
\(\Delta\subseteq\Omega^\circ\) and \(\operatorname{relint}(F)\) is dense
in \(F\), it follows that
\(\operatorname{relint}(F)\cap\Delta^\uparrow\neq\varnothing\). The
reduction formula follows from
Theorem~\ref{thm:reduction-map}(ii). Finally, if \(F\subseteq F'\),
then \(\sigma_F\supseteq\sigma_{F'}\), and hence
\(\mathfrak p_{\sigma_F}\subseteq\mathfrak p_{\sigma_{F'}}\).
\end{proof}

\subsection{A two-vertex example}\label{subsec:two-vertex-example}

Consider \(D=D_1(K)\), with \(w_1=(-2,-1)\), \(w_2=(-1,-2)\), and
\(\Delta=\operatorname{conv}\{w_1,w_2\}\). For
\(m=(a,b)\in\mathbb R_{\geq0}^2\),
\[
\lambda_\Delta(a,b)=\begin{cases}-2a-b,&a\geq b,\\-a-2b,&b\geq a.
\end{cases}
\]
Thus \(\Sigma_{\Delta^+}\) has maximal cones
\(\sigma_1=\{a\geq b\geq0\}\) and
\(\sigma_2=\{b\geq a\geq0\}\), meeting along
\(\rho=\mathbb R_{\geq0}(1,1)\). Its other rays are
\(\rho_1=\mathbb R_{\geq0}(1,0)\) and
\(\rho_2=\mathbb R_{\geq0}(0,1)\).

Recall that \(e_{(a,b)}=\pi^{-\lambda_\Delta(a,b)}x^a\partial^b\),
and write \(z^{(a,b)}=\overline e_{(a,b)}\). In particular,
\(e_{(1,0)}=\pi^2x\), \(e_{(0,1)}=\pi^2\partial\), and
\(e_{(1,1)}=\pi^3x\partial\). Since \((1,0)\) and \((0,1)\) lie in
no common cone, \(z^{(1,0)}z^{(0,1)}=0\). The special fiber is generated
by \(z^{(1,0)},z^{(0,1)},z^{(1,1)}\) with this relation, and hence is
the union of two affine planes meeting along an affine line.

The face-normal-cone correspondence and
Corollary~\ref{cor:gauss-toric-strata} identify all torus-invariant primes.
In the table, \(w\) may be any point of
\(\operatorname{relint}(F)\cap\Delta^\uparrow\).

\begin{center}
\begingroup
\small
\renewcommand{\arraystretch}{1.4}
\setlength{\tabcolsep}{8pt}
\begin{tabular}{lll}
\hline
Face \(F\) of \(\Delta^+\) & \(\sigma_F\) &
\(\operatorname{red}_\Delta(s(w))=\mathfrak p_{\sigma_F}\)\\
\hline
\(\{w_1\}\) & \(\sigma_1\) & \((z^{(0,1)})\)\\
\(\{w_2\}\) & \(\sigma_2\) & \((z^{(1,0)})\)\\
\(\Delta\) & \(\rho\) & \((z^{(1,0)},z^{(0,1)})\)\\
\(w_1+\mathbb R_{\geq0}(0,1)\) & \(\rho_1\) & \((z^{(0,1)},z^{(1,1)})\)\\
\(w_2+\mathbb R_{\geq0}(1,0)\) & \(\rho_2\) & \((z^{(1,0)},z^{(1,1)})\)\\
\(\Delta^+\) & \(\{0\}\) &
\((z^{(1,0)},z^{(0,1)},z^{(1,1)})\)\\
\hline
\end{tabular}
\endgroup
\end{center}

The first two primes are the generic points of the irreducible components,
while \(\mathfrak p_\rho\) is the generic point of their intersection.
The remaining rows exhibit the other torus-invariant primes of the special
fiber.

\section{Polyhedral \texorpdfstring{\(G\)}{G}-site and structure sheaves}
\label{sec:polyhedral-site}

We now organize the polyhedral domains in the seminorm space
\(\mathcal S(D)\) into a \(G\)-site and attach to them the Banach and
Fr\'echet algebras constructed in Section~\ref{sec:polyhedral-completions}.
For a real polyhedron \(\Delta\subseteq\Omega\), set
\begin{equation}\label{eq:polyhedral-domain}
U_\Delta=\operatorname{trop}^{-1}(\Delta)\subseteq\mathcal S(D),
\end{equation}
and put \(U_\varnothing=\varnothing\).

\subsection{Polyhedral domains and admissible coverings}

Let \(\mathcal U_{\mathbb R}\) be the poset category of the domains
\(U_\Delta\subseteq\mathcal S(D)\), for real polyhedra
\(\Delta\subseteq\Omega\), ordered by inclusion. Since the Gauss
section is a section of tropicalization,
\(\operatorname{trop}(U_\Delta)=\Delta\). Thus
\(\Delta\mapsto U_\Delta\) identifies \(\mathcal U_{\mathbb R}\) with
the poset of real polyhedra in \(\Omega\). Fiber products are intersections:
\[
U_{\Delta_1}\times_{U_\Delta}U_{\Delta_2}
=U_{\Delta_1\cap\Delta_2}.
\]
Let \(\mathcal U\) be the full subcategory corresponding to
\(\Gamma\)-rational polyhedra. By
Section~\ref{subsec:polyhedral-background}, it is stable under these
fiber products.

A family \(\{U_{\Delta_i}\to U_\Delta\}_{i\in I}\) is an
\emph{admissible polyhedral covering} if the \(\Delta_i\) cover
\(\Delta\) and every compact polyhedron \(C\subseteq\Delta\) is
contained in the union of finitely many \(\Delta_i\). If \(\Delta\) is
compact, this is equivalent to the existence of a finite subcover. A nested
cofinal exhaustion of an arbitrary polyhedron by compact polyhedra is
therefore admissible; see Section~\ref{subsec:polyhedral-background}.

\begin{proposition}\label{prop:polyhedral-pretopology}
Admissible polyhedral coverings define a Grothendieck pretopology on
\(\mathcal U_{\mathbb R}\). The full subcategory \(\mathcal U\)
is an induced subsite in the sense of
Section~\ref{subsec:sites-sheaves-background}.
\end{proposition}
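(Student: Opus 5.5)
We verify the three pretopology axioms of Section~\ref{subsec:sites-sheaves-background} directly, working through the identification of \(\mathcal U_{\mathbb R}\) with the poset of real polyhedra in \(\Omega\). Since \(\operatorname{trop}(U_\Delta)=\Delta\), inclusions \(U_{\Delta'}\subseteq U_\Delta\) correspond exactly to \(\Delta'\subseteq\Delta\). Fiber products are \(U_{\Delta_1\cap\Delta_2}\). Hence every statement reduces to a statement about polyhedra covering polyhedra, with the finiteness condition tested on compact polyhedra \(C\subseteq\Delta\).

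For axiom (i), an isomorphism in a poset is an equality \(U_{\Delta'}=U_\Delta\), so \(\Delta'=\Delta\). The one-element family \(\{\Delta\}\) covers \(\Delta\), and every compact \(C\subseteq\Delta\) lies in it.

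For axiom (ii), let \(\{\Delta_i\}\) be an admissible covering of \(\Delta\) and \(\Delta'\subseteq\Delta\). The pulled-back family is \(\{\Delta'\cap\Delta_i\}\), which are polyhedra covering \(\Delta'\). A compact polyhedron \(C\subseteq\Delta'\) is a compact polyhedron in \(\Delta\), so \(C\subseteq\Delta_{i_1}\cup\dots\cup\Delta_{i_r}\). Since \(C\subseteq\Delta'\), it follows that \(C\subseteq\bigcup_k(\Delta'\cap\Delta_{i_k})\).

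For axiom (iii), let \(\{\Delta_i\}\) cover \(\Delta\) and let \(\{\Delta_{ij}\}_j\) cover each \(\Delta_i\) admissibly. The composite family clearly covers \(\Delta\). The key step, and the only mild obstacle, concerns a compact polyhedron \(C\subseteq\Delta\). It lies in finitely many \(\Delta_{i_1},\dots,\Delta_{i_r}\), but \(C\) itself need not lie in any single \(\Delta_{i_k}\). We therefore pass to the pieces \(C\cap\Delta_{i_k}\). Each is an intersection of polyhedra, hence a polyhedron, and it is compact and contained in \(\Delta_{i_k}\). By admissibility of the covering of \(\Delta_{i_k}\), it lies in finitely many \(\Delta_{i_kj}\). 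The union over \(k\) gives finitely many members of the composite family covering \(C=\bigcup_k(C\cap\Delta_{i_k})\). The empty polyhedron causes no trouble: empty pieces are covered by the empty subfamily.

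For the equivalence remark in the compact case, take \(C=\Delta\). It also follows that a cofinal exhaustion is admissible, since any compact \(C\subseteq\Delta\) lies in a single \(C_j\).

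For the subsite statement, note first that \(\mathcal U\) is a full subcategory stable under the fiber products \(U_{\Delta_1\cap\Delta_2}\), because finite intersections of \(\Gamma\)-rational polyhedra are \(\Gamma\)-rational (Section~\ref{subsec:polyhedral-background}). We give \(\mathcal U\) the coverings consisting of admissible polyhedral coverings whose members are all \(\Gamma\)-rational. The same three verifications go through verbatim, because every object they produce is an intersection of \(\Gamma\)-rational polyhedra. In (iii), the auxiliary pieces \(C\cap\Delta_{i_k}\) are only used as test sets, so they need not be rational. By definition, \(\mathcal U\) is then an induced subsite, and restrictions of sheaves on \(\mathcal U_{\mathbb R}\) are sheaves on \(\mathcal U\) as noted there.
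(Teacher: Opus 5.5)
Your proof is correct and follows essentially the same route as the paper. Both arguments check identity coverings, prove pullback stability via the intersections \(\Delta'\cap\Delta_i\), and prove composition by cutting \(C\) into the compact pieces \(C\cap\Delta_i\) and applying admissibility of each \(\{\Delta_{ij}\}_j\); your explicit verification of the induced subsite, using that intersections of \(\Gamma\)-rational polyhedra are \(\Gamma\)-rational, spells out what the paper leaves to the remark just before the proposition.
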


\begin{proof}
Identity coverings are admissible. Suppose that \(\Delta'\subseteq\Delta\)
and that \(\{\Delta_i\}_{i\in I}\) is an admissible covering of
\(\Delta\). Then the \(\Delta'\cap\Delta_i\) cover \(\Delta'\).
Every compact polyhedron \(C\subseteq\Delta'\) is covered by finitely
many \(\Delta_i\), and hence by the corresponding intersections. Thus
admissibility is stable under pullback.

For composition, suppose that \(\{\Delta_i\}_{i\in I}\) is an
admissible covering of \(\Delta\), and that
\(\{\Delta_{ij}\}_{j\in J_i}\) is an admissible covering of
\(\Delta_i\) for each \(i\). Given a compact polyhedron
\(C\subseteq\Delta\), choose finitely many \(\Delta_i\) covering
\(C\). For each such \(i\), the compact polyhedron
\(C\cap\Delta_i\) is covered by finitely many \(\Delta_{ij}\).
Hence finitely many members of the composite family cover \(C\).
\end{proof}

\subsection{The analytic structure sheaf}

For a nonempty real polyhedron \(\Delta\subseteq\Omega\), define
\begin{equation}\label{eq:structure-presheaf}
\mathcal O_{\mathbb R}(U_\Delta)=D_\Delta^{\mathrm{Fr}},
\qquad\mathcal O_{\mathbb R}(U_\varnothing)=0.
\end{equation}
If \(\Delta'\subseteq\Delta\), the natural continuous inclusion
\(D_\Delta^{\mathrm{Fr}}\to D_{\Delta'}^{\mathrm{Fr}}\) defines the
restriction homomorphism. Thus \(\mathcal O_{\mathbb R}\) is a presheaf
of \(K\)-algebras, with Fr\'echet section algebras and continuous
restriction homomorphisms. If \(\Delta\) is
compact, then \(\mathcal O_{\mathbb R}(U_\Delta)=D_\Delta\).

The sheaf property reduces first to finite coverings of compact polyhedra.
Empty members of a covering may be omitted.

\begin{proposition}\label{prop:compact-structure-gluing}
The presheaf \(\mathcal O_{\mathbb R}\) has the gluing property for
every finite covering of a compact polyhedral domain.
\end{proposition}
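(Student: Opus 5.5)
The plan is to work entirely inside the common space of formal ordered-monomial series, where every section algebra and every restriction map is an inclusion (Proposition~\ref{prop:series-descriptions}). Let \(\Delta\) be a nonempty compact polyhedron and \(\{\Delta_i\}_{i=1}^s\) a finite covering of it by polyhedra. Each \(\Delta_i\subseteq\Delta\) is then compact, and after discarding empty members we may assume every \(\Delta_i\) is nonempty. All the algebras in the gluing sequence \eqref{eq:finite-gluing-property} are Banach completions \(D_{\Delta_i}\) and \(D_{\Delta_i\cap\Delta_j}\), together with \(0\) whenever \(\Delta_i\cap\Delta_j=\varnothing\).

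\emph{Injectivity.} The map \(D_\Delta\to\prod_iD_{\Delta_i}\) is injective because each restriction \(D_\Delta\to D_{\Delta_i}\) is already injective, being the natural inclusion of series.

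\emph{Exactness in the middle.} Take a compatible family \(f_i\in D_{\Delta_i}\). I would first show that all the \(f_i\) are the same formal series.
\begin{itemize}
\item If \(\Delta_i\cap\Delta_j\neq\varnothing\), then \(f_i\) and \(f_j\) have the same image in \(D_{\Delta_i\cap\Delta_j}\). The maps into this algebra are inclusions of series, so \(f_i=f_j\) coefficientwise.
\item If \(\Delta_i\cap\Delta_j=\varnothing\), compatibility gives no information. Here I use that \(\Delta\) is convex, hence connected. Form the graph on the indices whose edges are the pairs with nonempty intersection. This graph is connected: otherwise the union of the \(\Delta_i\) in one component and the union of the rest would be disjoint nonempty closed sets covering \(\Delta\).
\item Following paths in this graph, all \(f_i\) equal a single series \(f\).
\end{itemize}

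\emph{The glued series lies in \(D_\Delta\).} Since \(f\in D_{\Delta_i}\) for every \(i\), it suffices by Proposition~\ref{prop:series-descriptions} to check \(f\in D_q\) for each vertex \(q\) of \(\Delta\). Each such \(q\) lies in some \(\Delta_i\), and \(f\in D_{\Delta_i}\subseteq D_q\), since \(D_{\Delta_i}\) maps into \(D_w\) for every \(w\in\Delta_i\). Hence \(f\in\bigcap_qD_q=D_\Delta\), and it restricts to each \(f_i\).

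A more robust alternative, which avoids the vertex description, is to bound the weights directly. The convergence condition for \(f\) is that \(|c_m|e^{-\langle w,m\rangle}\to0\) uniformly for \(w\in\Delta\). Since \(\Delta=\bigcup_i\Delta_i\) is a finite union and \(f\) satisfies this uniformly on each \(\Delta_i\), it satisfies it uniformly on \(\Delta\). Moreover \(|f|_\Delta=\max_i|f|_{\Delta_i}\).

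The main obstacle is making the middle-exactness step rigorous. This means handling disjoint pairs through the connectivity argument, and being careful that all restriction maps really are the coefficientwise inclusions described after Proposition~\ref{prop:series-descriptions}, so that equality after restriction implies equality of series.
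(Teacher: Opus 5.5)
Your proposal is correct and follows the paper's proof. Both work in the common space of ordered-monomial series, use convexity of \(\Delta\) to make the intersection graph connected so that all \(f_i\) are one series, and then check that this series lies in \(D_\Delta\). Your main membership step, which places each vertex \(q\) in some \(\Delta_i\) and uses \(D_\Delta=\bigcap_q D_q\), is a harmless variant of the paper's termwise identity \(|c_{\alpha,\beta}x^\alpha\partial^\beta|_\Delta=\max_i|c_{\alpha,\beta}x^\alpha\partial^\beta|_{\Delta_i}\), and that identity is exactly your stated alternative.
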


\begin{proof}
The empty case is immediate. Let
\(\Delta=\Delta_1\cup\cdots\cup\Delta_r\) be a covering with
\(\Delta\neq\varnothing\); the nonempty \(\Delta_i\) are compact because they are closed subsets of
\(\Delta\). Let \(f_i\in D_{\Delta_i}\) be compatible.
Realize all \(D_{\Delta_i}\) and \(D_{\Delta_i\cap\Delta_j}\), for
nonempty intersections, in the common space of formal ordered-monomial
series, as in Section~\ref{sec:polyhedral-completions}. The restriction
maps send a series to the same series. Hence, if
\(\Delta_i\cap\Delta_j\neq\varnothing\), compatibility means that
the images of \(f_i\) and \(f_j\) in
\(D_{\Delta_i\cap\Delta_j}\) are equal, and therefore their
ordered-monomial coefficients agree.

The graph with vertices \(1,\ldots,r\), and an edge whenever
\(\Delta_i\cap\Delta_j\neq\varnothing\), is connected. Otherwise
\(\Delta\) would be the disjoint union of two nonempty closed subsets,
contradicting its convexity. Hence all \(f_i\) have the same coefficients.
Write the resulting formal series as
\(f=\sum_{\alpha,\beta}c_{\alpha,\beta}x^\alpha\partial^\beta\).
For every ordered monomial, we have
\[
\bigl|c_{\alpha,\beta}x^\alpha\partial^\beta\bigr|_\Delta
=\max_i\bigl|c_{\alpha,\beta}x^\alpha\partial^\beta\bigr|_{\Delta_i}.
\]
Since the covering is finite and the monomial terms tend to zero in each
\(D_{\Delta_i}\), they tend to zero for the \(\Delta\)-norm. Thus
\(f\in D_\Delta\). Uniqueness follows from the ordered-monomial expansion.
\end{proof}

\begin{theorem}\label{thm:structure-sheaf}
The presheaf \(\mathcal O_{\mathbb R}\) is a sheaf of \(K\)-algebras
on \(\mathcal U_{\mathbb R}\). Its section algebras are complete
nonarchimedean Fr\'echet \(K\)-algebras, and its restriction homomorphisms
are continuous.
\end{theorem}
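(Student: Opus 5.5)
The plan is to reduce the sheaf condition for an arbitrary admissible covering to Proposition~\ref{prop:compact-structure-gluing} through the ordered-monomial realization. The topological assertions are already in hand: completeness of the section algebras is Proposition~\ref{prop:frechet-completion}, and so is continuity of the restriction maps. Only the gluing property for an admissible covering \(\{U_{\Delta_i}\to U_\Delta\}_{i\in I}\) remains, and we may assume \(\Delta\neq\varnothing\) and discard empty members.

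\emph{Uniqueness.} All algebras \(D_{\Delta'}^{\mathrm{Fr}}\) sit inside the common space of formal ordered-monomial series, and restriction is the identity on series. Hence a section \(f\in D_\Delta^{\mathrm{Fr}}\) whose restriction to some nonempty \(\Delta_i\) vanishes has all coefficients zero, so \(f=0\).

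\emph{Existence.} Let \(f_i\in D_{\Delta_i}^{\mathrm{Fr}}\) be compatible. First I show that all \(f_i\) have the same coefficients. Fix \(i,i'\), pick points \(p\in\Delta_i\) and \(p'\in\Delta_{i'}\), and let \(C\) be the segment \([p,p']\subseteq\Delta\), a compact polyhedron. By admissibility, \(C\) is covered by finitely many \(\Delta_{j}\). The convexity and connectedness argument in the proof of Proposition~\ref{prop:compact-structure-gluing}, applied to the finite cover of \(C\) by the \(C\cap\Delta_j\) (adjoining \(\Delta_i\) and \(\Delta_{i'}\) if needed), gives a chain of members with nonempty successive intersections. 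Along this chain, compatibility on the intersections forces equal coefficients. Thus there is a single formal series \(f\) restricting to every \(f_i\).

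It remains to show \(f\in D_\Delta^{\mathrm{Fr}}\). Fix a cofinal compact exhaustion \((C_k)\) of \(\Delta\). Each \(C_k\) is covered by finitely many \(\Delta_{i_1},\ldots,\Delta_{i_s}\), so \(C_k=\bigcup_t (C_k\cap\Delta_{i_t})\) is a finite covering by compact polyhedra. Each \(f_{i_t}\) restricts to \(D_{C_k\cap\Delta_{i_t}}\), using Proposition~\ref{prop:frechet-completion} for compact pieces, and all these restrictions are the same series \(f\). Proposition~\ref{prop:compact-structure-gluing} therefore gives \(f\in D_{C_k}\). Since this holds for every \(k\), we get \(f\in\bigcap_kD_{C_k}=D_\Delta^{\mathrm{Fr}}\), and \(f\) restricts to each \(f_i\) by construction.

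The main obstacle is the coefficient-agreement step. Compatibility is only imposed on pairwise intersections, which may be empty for an infinite cover, so agreement of coefficients across the whole cover is not immediate. The segment argument handles this: it reduces agreement to a compact, hence finitely covered, connected set, where the chain argument already used in Proposition~\ref{prop:compact-structure-gluing} applies.
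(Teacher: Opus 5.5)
Your proof is correct and takes essentially the paper's route: admissibility reduces everything to finite coverings of compact subpolyhedra, Proposition~\ref{prop:compact-structure-gluing} glues there, and membership in every compact piece gives an element of \(D_\Delta^{\mathrm{Fr}}\). The only organizational difference is that you fix a single coefficient series up front by your segment-chain argument, whereas the paper gets the same consistency from uniqueness of compact gluing, independence of the chosen finite subcover, and directedness of the compact subpolyhedra of \(\Delta\).
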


\begin{proof}
Let \(\{U_{\Delta_i}\to U_\Delta\}_{i\in I}\) be an admissible
polyhedral covering, and let \(f_i\in D_{\Delta_i}^{\mathrm{Fr}}\) be
a compatible family. The empty object is immediate, so assume that
\(\Delta\) is nonempty.

For every nonempty compact polyhedron \(C\subseteq\Delta\), admissibility
provides finitely many \(\Delta_i\) such that the nonempty compact
polyhedra \(C\cap\Delta_i\) cover \(C\).
Proposition~\ref{prop:compact-structure-gluing} glues the corresponding
restrictions of the \(f_i\) to a unique element \(f_C\in D_C\).
This element is independent of the chosen finite subcover, since two choices
have a common finite refinement and compact gluing is unique.

If \(C'\subseteq C\), the same uniqueness shows that the restriction of
\(f_C\) to \(D_{C'}\) is \(f_{C'}\). Hence the \(f_C\) define an
element
\[
f\in\varprojlim_{C\subseteq\Delta}D_C=D_\Delta^{\mathrm{Fr}},
\]
where \(C\) ranges over the nonempty compact polyhedra contained in
\(\Delta\). Its restriction to every \(U_{\Delta_i}\) is \(f_i\),
since this may be checked after restriction to compact subpolyhedra of
\(\Delta_i\). Uniqueness follows in the same way.
\end{proof}

\begin{samepage}
\begin{remark}\label{rem:evaluation-norming}
Let \(\Delta\subseteq\Omega\) be a nonempty real polyhedron.
\begin{enumerate}[label=\textup{(\roman*)}]
\item If \(\eta\in U_\Delta\subseteq\mathcal S(D)\) and
\(w=\operatorname{trop}(\eta)\), then \(\eta(P)\leq|P|_w\) for
\(P\in D\). Hence \(\eta\) extends continuously to \(D_w\), and the
restriction \(D_\Delta^{\mathrm{Fr}}\to D_w\) defines \(\eta(f)\)
for every \(f\in\mathcal O_{\mathbb R}(U_\Delta)\).
\item If \(C\subseteq\Delta\) is nonempty and compact, then for every
\(f\in D_C\),
\begin{equation}\label{eq:section-norming}
|f|_C=\sup_{\eta\in U_C}\eta(f)=\sup_{w\in C}s(w)(f).
\end{equation}
Thus \(s(C)\) is norming for \(D_C\) in the sense of
Section~\ref{subsec:toric-background}.
\end{enumerate}
\end{remark}
\end{samepage}

\begin{proof}
Part (i) follows from the maximality of the Gauss point in each tropical
fiber. For (ii), the vertex formula shows that
\(|f|_C=\sup_{w\in C}s(w)(f)\). Part (i) implies
\(\eta(f)\leq s(\operatorname{trop}(\eta))(f)\leq|f|_C\) for
\(\eta\in U_C\), while \(s(C)\subseteq U_C\).
\end{proof}

\subsection{The integral structure sheaf}\label{subsec:integral-structure-sheaf}

Recall from Section~\ref{subsec:toric-background} that
\(K^\circ=\{a\in K:|a|\leq1\}\) is the valuation ring of \(K\).
For a nonempty real polyhedron \(\Delta\subseteq\Omega\), define
\begin{equation}\label{eq:integral-structure-sheaf}
\mathcal O_{\mathbb R}^{\circ}(U_\Delta)
=\{f\in\mathcal O_{\mathbb R}(U_\Delta):
\eta(f)\leq1\text{ for every }\eta\in U_\Delta\},
\end{equation}
and put \(\mathcal O_{\mathbb R}^{\circ}(U_\varnothing)=0\).

\begin{proposition}\label{prop:integral-structure-sheaf}
The assignment \(\mathcal O_{\mathbb R}^{\circ}\) is a subsheaf of
\(K^\circ\)-algebras of \(\mathcal O_{\mathbb R}\). If \(\Delta\) is
nonempty and compact, then
\begin{equation}\label{eq:compact-integral-sections}
\mathcal O_{\mathbb R}^{\circ}(U_\Delta)=D_\Delta^\circ.
\end{equation}
\end{proposition}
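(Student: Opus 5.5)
We first establish the compact identity \eqref{eq:compact-integral-sections}, since it is the only place where the analytic structure enters, and then deduce the sheaf statement from Theorem~\ref{thm:structure-sheaf}. Let \(\Delta\) be nonempty and compact, so that \(\mathcal O_{\mathbb R}(U_\Delta)=D_\Delta\). By Remark~\ref{rem:evaluation-norming}(ii), or equivalently Corollary~\ref{cor:proper-norming}, every \(f\in D_\Delta\) satisfies
\[
|f|_\Delta=\sup_{\eta\in U_\Delta}\eta(f).
\]
Hence \(\eta(f)\leq1\) for all \(\eta\in U_\Delta\) if and only if \(|f|_\Delta\leq1\). This is exactly the identity \(\mathcal O^\circ_{\mathbb R}(U_\Delta)=D_\Delta^\circ\).

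For general nonempty \(\Delta\), we show that \(\mathcal O^\circ_{\mathbb R}(U_\Delta)\) is a \(K^\circ\)-subalgebra. Each \(\eta\in U_\Delta\) extends to \(D_\Delta^{\mathrm{Fr}}\) through \(D_w\), as in Remark~\ref{rem:evaluation-norming}(i). It is a multiplicative nonarchimedean seminorm there with \(\eta(a)=|a|\) for \(a\in K\), since these properties pass from \(D\) to the completion \(D_w\) by continuity. Thus the unit-ball condition is stable under sums, products, and multiplication by \(K^\circ\), and it contains \(1\).

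Next we check that \(\mathcal O^\circ_{\mathbb R}\) is a subpresheaf. If \(\Delta'\subseteq\Delta\), then \(U_{\Delta'}\subseteq U_\Delta\). The value \(\eta(f|_{\Delta'})\) agrees with \(\eta(f)\), because both are computed through the same restriction to \(D_w\) with \(w=\operatorname{trop}(\eta)\), and these restrictions are compatible inclusions of ordered-monomial series. Hence restriction preserves the defining condition.

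For the sheaf property, let \(\{U_{\Delta_i}\to U_\Delta\}\) be an admissible covering and let \(f_i\in\mathcal O^\circ_{\mathbb R}(U_{\Delta_i})\) be compatible. By Theorem~\ref{thm:structure-sheaf} they glue to a unique \(f\in D^{\mathrm{Fr}}_\Delta\), so uniqueness in \(\mathcal O^\circ_{\mathbb R}\) is inherited. To see that \(f\) lies in \(\mathcal O^\circ_{\mathbb R}(U_\Delta)\), take \(\eta\in U_\Delta\). Since the \(\Delta_i\) cover \(\Delta\), the point \(\operatorname{trop}(\eta)\) lies in some \(\Delta_i\), so \(\eta\in U_{\Delta_i}\). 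Then
\[
\eta(f)=\eta(f|_{\Delta_i})=\eta(f_i)\leq1,
\]
where the first equality is the compatibility of evaluation with restriction from the previous paragraph. So the condition is pointwise on \(\mathcal S(D)\) and local for any set-theoretic covering. The exactness of \eqref{eq:finite-gluing-property} for \(\mathcal O^\circ_{\mathbb R}\) then follows from that for \(\mathcal O_{\mathbb R}\), together with the empty case \(\mathcal O^\circ_{\mathbb R}(U_\varnothing)=0\).

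The only step needing care is the compatibility of the evaluation \(\eta(f)\) with restriction, that is, that evaluation is well defined on \(D^{\mathrm{Fr}}_\Delta\) independently of the domain. We handle it by noting that all the maps \(D^{\mathrm{Fr}}_\Delta\to D^{\mathrm{Fr}}_{\Delta'}\to D_w\) are the identity on ordered-monomial series, which holds by Proposition~\ref{prop:frechet-completion}.
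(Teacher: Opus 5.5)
Your proposal is correct and follows essentially the paper's own proof. In both, the sheaf property holds because integrality is tested pointwise: each \(\eta\in U_\Delta\) lies in some \(U_{\Delta_i}\). The compact identity then follows from the norming formula of Remark~\ref{rem:evaluation-norming}(ii). The paper leaves implicit the subalgebra check and the compatibility of evaluation with restriction, and you spell both out.
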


\begin{proof}
Restrictions preserve integrality. Suppose that integral local sections
on an admissible polyhedral covering of \(U_\Delta\) glue in
\(\mathcal O_{\mathbb R}\) to a section \(f\). For
\(\eta\in U_\Delta\), choose a member of the covering containing
\(\eta\). The restriction of \(f\) there is integral, so \(\eta(f)\leq1\).
Hence \(f\in\mathcal O_{\mathbb R}^{\circ}(U_\Delta)\).

If \(\Delta\) is compact, Remark~\ref{rem:evaluation-norming}(ii) shows
that the condition \(\eta(f)\leq1\) for every \(\eta\in U_\Delta\)
is equivalent to \(|f|_\Delta\leq1\). This proves
\eqref{eq:compact-integral-sections}.
\end{proof}

Restricting to the induced \(\Gamma\)-rational subsite \(\mathcal U\), set
\[
\mathcal O=\mathcal O_{\mathbb R}|_{\mathcal U},
\qquad
\mathcal O^\circ=\mathcal O_{\mathbb R}^\circ|_{\mathcal U}.
\]
By Proposition~\ref{prop:polyhedral-pretopology} and
Section~\ref{subsec:sites-sheaves-background}, these are sheaves on
\(\mathcal U\). This restriction does not require a discrete valuation.

\begin{remark}\label{rem:integral-model-comparison}
Suppose that \(K\) is discretely valued, and write \(R=K^\circ\), with
notation as in Section~\ref{sec:flatness}. If \(\Delta\) is nonempty,
compact, and \(\Gamma\)-rational,
Proposition~\ref{prop:integral-completion} identifies
\[
\mathcal O^\circ(U_\Delta)=D_\Delta^\circ\simeq\widehat A_\Delta.
\]
If, in addition, \(\Delta\) has integral vertices and
\(\Delta\subseteq\Omega^\circ\), Theorem~\ref{thm:toric-face-reduction}
identifies
\[
\mathcal O^\circ(U_\Delta)/\pi\mathcal O^\circ(U_\Delta)
\simeq k[\Sigma_{\Delta^+}],
\]
where \(\Sigma_{\Delta^+}\) is the minimum normal fan of the upper hull
\(\Delta^+=\Delta+\mathbb R_{\geq0}^{2n}\).
\end{remark}

\section{Module sheaves and tropical \texorpdfstring{\(D\)}{D}-ideals}
\label{sec:module-sheaves}

Assume throughout this section that \(K\) is discretely valued, with
notation as in Section~\ref{sec:flatness}. We work on the induced
\(\Gamma\)-rational subsite \(\mathcal U\) and with the structure sheaf
\(\mathcal O\) of Section~\ref{sec:polyhedral-site}.

Let \(M\) be a finitely generated left \(D\)-module. For a nonempty
\(\Gamma\)-rational polyhedron \(\Delta\subseteq\Omega\), define
\begin{equation}\label{eq:module-presheaf}
\widetilde M(U_\Delta)=D_\Delta^{\mathrm{Fr}}\otimes_DM,
\end{equation}
and put \(\widetilde M(U_\varnothing)=0\). If
\(\Delta'\subseteq\Delta\), the homomorphism
\(D_\Delta^{\mathrm{Fr}}\to D_{\Delta'}^{\mathrm{Fr}}\) induces the
restriction map on \(\widetilde M\). Thus \(\widetilde M\) is a presheaf
of left \(\mathcal O\)-modules.

If \(\Delta\) is compact, then
\(\widetilde M(U_\Delta)=D_\Delta\otimes_DM\). More generally,
Proposition~\ref{prop:finite-module-limit} identifies, for every nested
cofinal compact \(\Gamma\)-rational exhaustion \((C_j)\) of \(\Delta\),
\begin{equation}\label{eq:module-presheaf-limit}
\widetilde M(U_\Delta)\simeq\varprojlim_j(D_{C_j}\otimes_DM)
\end{equation}
as topological modules. If \(M\simeq D^r/N\),
\eqref{eq:frechet-presentation} identifies the same module with the
Hausdorff quotient \((D_\Delta^{\mathrm{Fr}})^r/D_\Delta^{\mathrm{Fr}}N\).

\begin{lemma}\label{lem:compact-mayer-vietoris}
Let \(\Delta=\Delta_1\cup\Delta_2\) be a compact
\(\Gamma\)-rational polyhedron, where \(\Delta_1,\Delta_2\) are nonempty
compact \(\Gamma\)-rational polyhedra, and put
\(C=\Delta_1\cap\Delta_2\). Then \(C\neq\varnothing\), and
\begin{equation}\label{eq:compact-mayer-vietoris}
0\longrightarrow D_\Delta\longrightarrow D_{\Delta_1}\oplus D_{\Delta_2}
\longrightarrow D_C\longrightarrow0
\end{equation}
is exact, where the last map is \((f_1,f_2)\mapsto f_1-f_2\).
Consequently,
\begin{equation}\label{eq:module-mayer-vietoris}
0\longrightarrow D_\Delta\otimes_DM
\longrightarrow(D_{\Delta_1}\otimes_DM)\oplus(D_{\Delta_2}\otimes_DM)
\longrightarrow D_C\otimes_DM\longrightarrow0
\end{equation}
is exact.
\end{lemma}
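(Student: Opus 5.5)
Three things need checking: that \(C\) is nonempty, that \eqref{eq:compact-mayer-vietoris} is exact, and that tensoring with \(M\) keeps it exact.

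\emph{Nonemptiness.} Suppose \(C=\varnothing\). Then \(\Delta\) would be the disjoint union of the nonempty closed sets \(\Delta_1\) and \(\Delta_2\). This contradicts the connectedness of the convex set \(\Delta\), exactly as in the proof of Proposition~\ref{prop:compact-structure-gluing}.

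\emph{Exactness at the first two terms.} Exactness at \(D_\Delta\) and at \(D_{\Delta_1}\oplus D_{\Delta_2}\) is precisely the gluing property for the binary covering \(\{\Delta_1,\Delta_2\}\). This is Proposition~\ref{prop:compact-structure-gluing} written as the sequence \eqref{eq:finite-gluing-property}. The diagonal terms \(\Delta_i\cap\Delta_i\) are redundant, so only the difference map to \(D_C\) matters.

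\emph{Surjectivity onto \(D_C\).} This is the only genuinely new point. Work in the common space of formal ordered-monomial series from Proposition~\ref{prop:series-descriptions}. Let \(g=\sum_m c_m x^\alpha\partial^\beta\in D_C\), with \(m=(\alpha,\beta)\). Split each monomial term according to which of \(\Delta_1\) or \(\Delta_2\) realizes the maximum of \(|x^\alpha\partial^\beta|\). For each \(m\), the convex function \(w\mapsto -\langle w,m\rangle\) attains its maximum over \(\Delta\) at a vertex, and that vertex lies in \(\Delta_1\) or in \(\Delta_2\). Hence
\[
|x^\alpha\partial^\beta|_\Delta=\max\bigl(|x^\alpha\partial^\beta|_{\Delta_1},|x^\alpha\partial^\beta|_{\Delta_2}\bigr).
\]
This alone does not compare the \(\Delta_i\)-norms with the \(C\)-norm, and that comparison is the crux.

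Fix \(m\) and \(i\in\{1,2\}\). I want to show
\[
\min\bigl(|x^\alpha\partial^\beta|_{\Delta_1},|x^\alpha\partial^\beta|_{\Delta_2}\bigr)\le|x^\alpha\partial^\beta|_C .
\]
Equivalently, one of \(\lambda_{\Delta_1}(m)\), \(\lambda_{\Delta_2}(m)\) is at least \(\lambda_C(m)\). Argue by contradiction. Suppose that for each \(i\) there is \(p_i\in\Delta_i\) with \(\langle p_i,m\rangle<\lambda_C(m)\). The segment \([p_1,p_2]\) lies in \(\Delta\) by convexity and is covered by the two closed sets \(\Delta_1\) and \(\Delta_2\). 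By connectedness of the segment, it meets \(C\) in some point \(q\). Since \(q\) lies on the segment, \(\langle q,m\rangle\le\max(\langle p_1,m\rangle,\langle p_2,m\rangle)<\lambda_C(m)\). This contradicts \(q\in C\), so the inequality holds.

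Now let \(S_1\) be the set of \(m\) with \(\lambda_{\Delta_1}(m)\ge\lambda_C(m)\), and let \(S_2\) be its complement, on which \(\lambda_{\Delta_2}(m)\ge\lambda_C(m)\). Put \(f_1=\sum_{m\in S_1}c_m x^\alpha\partial^\beta\) and \(f_2=-\sum_{m\in S_2}c_m x^\alpha\partial^\beta\). On \(S_i\), the weighted coefficients of \(f_i\) for \(\Delta_i\) are dominated by those of \(g\) for \(C\), so they tend to zero. Therefore \(f_i\in D_{\Delta_i}\) and \(f_1-f_2=g\). I expect this splitting argument to be the main obstacle. It needs care because it uses \(\lambda\) on the \(\Delta_i\) rather than on \(\Delta\), and the segment argument must be phrased correctly.

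\emph{Tensoring with \(M\).} Theorem~\ref{thm:compact-flat} shows that each of \(D_\Delta\), \(D_{\Delta_i}\), and \(D_C\) is flat as a right \(D\)-module. The sequence \eqref{eq:compact-mayer-vietoris} is an exact sequence of \((D_\Delta,D)\)-bimodules, or at least of right \(D\)-modules, since all maps are induced by the series identity. Because \(D_C\) is flat as a right \(D\)-module, the associated \(\operatorname{Tor}_1^D(D_C,M)\) vanishes. Applying \(-\otimes_DM\) therefore keeps the sequence exact, which yields \eqref{eq:module-mayer-vietoris}. Right exactness alone already gives exactness at the last two terms, so flatness is needed only for injectivity on the left.
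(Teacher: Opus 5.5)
Your proposal is correct and follows the paper's proof essentially step by step: nonemptiness by connectedness, the first two exactness claims from Proposition~\ref{prop:compact-structure-gluing}, surjectivity by splitting the monomials of \(g\) using the key inequality \(\max_i\lambda_{\Delta_i}(m)\geq\lambda_C(m)\), and flatness of \(D_C\) for the tensored sequence. The only cosmetic difference is that you get this inequality by a connectedness contradiction on the segment \([p_1,p_2]\), whereas the paper proves the full identity \eqref{eq:intersection-support-function} with a first-entry parameter \(t_0\); your splitting sets coincide with the paper's.
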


\begin{proof}
Convexity of \(\Delta\) and closedness of the two nonempty covering
polyhedra imply that \(C\neq\varnothing\).
Proposition~\ref{prop:compact-structure-gluing} proves exactness of
\eqref{eq:compact-mayer-vietoris} at the first two terms. For
\(m\in\mathbb N^{2n}\), we claim that
\begin{equation}\label{eq:intersection-support-function}
\lambda_C(m)=\max\{\lambda_{\Delta_1}(m),\lambda_{\Delta_2}(m)\}.
\end{equation}
The inequality \(\geq\) follows from \(C\subseteq\Delta_1,\Delta_2\).
For the reverse inequality, suppose
\(a_1:=\lambda_{\Delta_1}(m)\leq a_2:=\lambda_{\Delta_2}(m)\), and
choose \(w_i\in\Delta_i\) with \(\langle w_i,m\rangle=a_i\).
Let \(w(t)=(1-t)w_1+tw_2\), and let \(t_0\) be the smallest
\(t\in[0,1]\) for which \(w(t)\in\Delta_2\). If \(t_0=0\), then
\(w(t_0)=w_1\in C\). Otherwise, since
\(\Delta=\Delta_1\cup\Delta_2\) is convex, \(w(t)\in\Delta_1\) for
\(t<t_0\), and closedness implies \(w(t_0)\in C\). Hence
\(a_2\leq\langle w(t_0),m\rangle=(1-t_0)a_1+t_0a_2\leq a_2\),
proving \eqref{eq:intersection-support-function}.

Let \(f=\sum_m c_mx^\alpha\partial^\beta\in D_C\), where
\(m=(\alpha,\beta)\). Write \(f=f_1+f_2\), where \(f_1\) is the
sum of the terms for which
\(\lambda_{\Delta_1}(m)\geq\lambda_{\Delta_2}(m)\), and \(f_2\)
is the sum of the remaining terms. By
\eqref{eq:intersection-support-function}, \(\lambda_C=\lambda_{\Delta_1}\)
on the first set of exponents and \(\lambda_C=\lambda_{\Delta_2}\) on
the second. The defining convergence condition for \(D_C\) therefore
implies \(f_1\in D_{\Delta_1}\) and \(f_2\in D_{\Delta_2}\).
The pair \((f_1,-f_2)\) maps to \(f\), proving surjectivity of the
last map in \eqref{eq:compact-mayer-vietoris}.

Applying \(-\otimes_DM\) to \eqref{eq:compact-mayer-vietoris}, right
exactness preserves exactness from the middle term onward. Since \(D_C\)
is flat as a right \(D\)-module by Theorem~\ref{thm:compact-flat},
\(\operatorname{Tor}_1^D(D_C,M)=0\), so the first map remains injective.
This proves \eqref{eq:module-mayer-vietoris}.
\end{proof}

In particular, \(\widetilde M\) has the gluing property of
Section~\ref{subsec:sites-sheaves-background} for binary compact
\(\Gamma\)-rational coverings.

\begin{proposition}\label{prop:compact-module-gluing}
The presheaf \(\widetilde M\) has the gluing property for every finite
covering of a compact \(\Gamma\)-rational polyhedral domain.
\end{proposition}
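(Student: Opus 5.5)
The plan follows the strategy announced in the introduction: refine the finite covering to a polyhedral subdivision, then glue cells one at a time using the binary Mayer--Vietoris sequence of Lemma~\ref{lem:compact-mayer-vietoris}.

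Let \(\Delta=\Delta_1\cup\cdots\cup\Delta_r\) be a finite covering of a nonempty compact \(\Gamma\)-rational polyhedron by nonempty compact \(\Gamma\)-rational polyhedra. Take the hyperplanes supporting all facets of all \(\Delta_i\); they are \(\Gamma\)-rational. Let \(\mathcal P\) be the polyhedral complex they cut out on \(\Delta\). Its maximal cells \(P_1,\ldots,P_N\) are compact \(\Gamma\)-rational polyhedra, each contained in some \(\Delta_i\), and every \(\Delta_i\) and every intersection \(\Delta_i\cap\Delta_j\) is a union of cells.

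The first step is a reduction. Suppose \(\widetilde M\) has the gluing property for every covering of a compact \(\Gamma\)-rational polyhedron \(Q\) by the cells of such a hyperplane complex restricted to \(Q\). Then the original covering follows by a standard refinement argument. A compatible family \((s_i)\) restricts to a compatible family on the cells, which glues to \(s\in\widetilde M(U_\Delta)\). For each \(i\), both \(s|_{\Delta_i}\) and \(s_i\) restrict to the same sections on the cells contained in \(\Delta_i\). Injectivity for the covering of \(\Delta_i\) by those cells then gives \(s|_{\Delta_i}=s_i\). Uniqueness for the original covering follows the same way. The cells of \(\mathcal P\) lying in \(\Delta_i\) are exactly the cells of the same arrangement restricted to \(\Delta_i\), so the claim on \(\Delta_i\) is an instance of the same statement.

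It remains to prove gluing for a hyperplane-arrangement subdivision. I will induct on the number \(h\) of hyperplanes meeting the interior of \(Q\) (relative to its affine span). If \(h=0\), the covering contains \(Q\) itself and gluing is trivial. Otherwise, pick one such hyperplane \(H\) with half-spaces \(H^\pm\), and set \(Q^\pm=Q\cap H^\pm\) and \(C=Q\cap H\). These are compact \(\Gamma\)-rational polyhedra with \(Q=Q^+\cup Q^-\) and \(Q^+\cap Q^-=C\).

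Given a compatible family on the cells, note that each cell lies in \(Q^+\) or \(Q^-\). By induction, since fewer hyperplanes cut the interior of \(Q^\pm\), the family glues to \(s^\pm\in\widetilde M(U_{Q^\pm})\). To compare \(s^+|_C\) and \(s^-|_C\), observe that \(C\) is covered by the faces \(P\cap H\) of the cells adjacent to \(H\). On each \(P^+\cap P^-\cap H\), the restrictions agree by compatibility. If \(P\) and \(P'\) meet only inside \(H\), their intersection is a face of both. Applying induction to \(C\), which has lower dimension so the arrangement induction applies, gives \(s^+|_C=s^-|_C\). Exactness of \eqref{eq:module-mayer-vietoris} at the middle term now yields \(s\in\widetilde M(U_Q)\) restricting to \(s^\pm\), hence to the given cell sections. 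Exactness at the first term, combined with injectivity on \(Q^\pm\) by induction, gives uniqueness.

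For the induction to be well founded, I will run it on the pair (dimension of \(Q\), number \(h\) of cutting hyperplanes) in lexicographic order. The case \(C=\varnothing\) cannot occur, by Lemma~\ref{lem:compact-mayer-vietoris}.

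I expect the main obstacle to be the bookkeeping on \(C\). One has to check that the restrictions of the cell sections to the cells of the induced subdivision of \(C\) form a compatible family whose gluing is witnessed by both \(s^+|_C\) and \(s^-|_C\). This needs the fiber products \(U_P\times U_{P'}=U_{P\cap P'}\), and the fact that the gluing property includes the intersection terms, which may be lower-dimensional cells. I would handle this by applying the inductive hypothesis in the equivalent form "a section on \(Q\) is determined by its restrictions to the maximal cells." That injectivity on \(C\) is all that is needed to identify \(s^+|_C\) with \(s^-|_C\).
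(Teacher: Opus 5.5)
Your proposal is correct and is essentially the paper's argument: refine the covering by the \(\Gamma\)-rational hyperplanes of defining inequality systems for the \(\Delta_i\), then glue back with the binary Mayer--Vietoris sequence of Lemma~\ref{lem:compact-mayer-vietoris}; take all of these hyperplanes rather than only facet hyperplanes, so that lower-dimensional \(\Delta_i\) are unions of cells. The difference is only bookkeeping: the paper cuts the whole subdivision by \(H_1,\ldots,H_N\) in turn and restores compatibility on each \(C\cap C'\) by binary uniqueness, whereas you recurse \(Q\mapsto Q^\pm\) and identify \(s^+|_{Q\cap H}\) with \(s^-|_{Q\cap H}\) by a lower-dimensional instance of injectivity, which is sound under your lexicographic induction once \(h\) counts only hyperplanes meeting \(\operatorname{relint}Q\) without containing \(Q\).
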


\begin{proof}
Let \(\Delta=\Delta_1\cup\cdots\cup\Delta_r\) be such a covering.
The empty case is immediate, and empty members may be omitted.
Lemma~\ref{lem:compact-mayer-vietoris} establishes the gluing property for
a covering by two compact polyhedra. A direct induction on the number of
members of the present covering is not available, since a partial union
such as \(\Delta_1\cup\cdots\cup\Delta_{r-1}\) need not be a polyhedron.
We therefore refine the covering by a common polyhedral subdivision obtained
from successive hyperplane cuts, so that
Lemma~\ref{lem:compact-mayer-vietoris} can be applied at each step.

For each \(\Delta_i\), choose a finite system of \(\Gamma\)-rational
affine-linear inequalities defining it, and let \(H_1,\ldots,H_N\) be
the affine hyperplanes underlying these inequalities. Write \(H_j^+\) and
\(H_j^-\) for the two closed half-spaces bounded by \(H_j\).

Set \(\mathcal T_0=\{\Delta\}\). Inductively, obtain \(\mathcal T_j\)
from \(\mathcal T_{j-1}\) by replacing each \(C\in\mathcal T_{j-1}\)
with the nonempty members among \(C\cap H_j^+\) and \(C\cap H_j^-\).
Thus \(\mathcal T_j\) is a finite subdivision of \(\Delta\) by compact
\(\Gamma\)-rational polyhedra. Every cell \(C\in\mathcal T_N\) is
contained in some \(\Delta_i\). Indeed, choose a point in the relative
interior of \(C\). It lies in some \(\Delta_i\), and the signs of the
affine-linear functions defining \(\Delta_i\) are fixed on the relative
interior of \(C\); since \(\Delta_i\) is closed, \(C\subseteq\Delta_i\).

Let \(m_i\in\widetilde M(U_{\Delta_i})\) be compatible. For each
\(C\in\mathcal T_N\), choose \(i\) with \(C\subseteq\Delta_i\) and
set \(m_C=m_i|_C\). Compatibility of the \(m_i\) shows that \(m_C\)
is independent of this choice and that the resulting sections agree on
intersections.

We now pass successively from \(\mathcal T_j\) to \(\mathcal T_{j-1}\),
for \(j=N,\ldots,1\). Let \(C\in\mathcal T_{j-1}\). If both
\(C^+=C\cap H_j^+\) and \(C^-=C\cap H_j^-\) are nonempty, then
\(C=C^+\cup C^-\) and \(C^+\cap C^-=C\cap H_j\). The sections
already constructed on \(C^+\) and \(C^-\) agree on their intersection,
so Lemma~\ref{lem:compact-mayer-vietoris} determines a unique section
\(m_C\in\widetilde M(U_C)\) restricting to both. If one of \(C^\pm\)
is empty, the other equals \(C\), so no gluing is required.

The sections obtained on the cells of \(\mathcal T_{j-1}\) remain
compatible. Indeed, if \(C,C'\in\mathcal T_{j-1}\), their restrictions
to \(C\cap C'\) agree after intersecting with \(H_j^+\) and with
\(H_j^-\), by compatibility on \(\mathcal T_j\). The uniqueness part
of Lemma~\ref{lem:compact-mayer-vietoris}, applied to the corresponding
binary covering of \(C\cap C'\), shows that the restrictions agree on all
of \(C\cap C'\); if either intersection is empty, equality follows from
the other piece.

Repeating this procedure produces a section
\(m\in\widetilde M(U_\Delta)\). By construction, \(m\) and \(m_i\)
agree on the subdivision of \(\Delta_i\) induced by \(\mathcal T_N\);
reversing the same sequence of cuts inside \(\Delta_i\) and using
uniqueness in Lemma~\ref{lem:compact-mayer-vietoris} shows that
\(m|_{\Delta_i}=m_i\). The same argument proves uniqueness.
\end{proof}

\begin{corollary}\label{thm:module-sheaf}
For every finitely generated left \(D\)-module \(M\), the presheaf
\(\widetilde M\) is a sheaf of left \(\mathcal O\)-modules on
\(\mathcal U\).
\end{corollary}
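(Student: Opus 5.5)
To prove the corollary, I would reduce the sheaf condition for an arbitrary admissible covering to the compact finite case of Proposition~\ref{prop:compact-module-gluing}. The reduction goes through the projective-limit description \eqref{eq:module-presheaf-limit}, mirroring the proof of Theorem~\ref{thm:structure-sheaf}. The module structure is immediate. Each \(\widetilde M(U_\Delta)\) is a left \(D_\Delta^{\mathrm{Fr}}\)-module, and the restriction maps are induced by ring homomorphisms tensored with \(M\). Hence \((fm)|_{\Delta'}=f|_{\Delta'}m|_{\Delta'}\), and only the gluing property needs proof.

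Let \(\{U_{\Delta_i}\to U_\Delta\}_{i\in I}\) be an admissible covering in \(\mathcal U\) with \(\Delta\neq\varnothing\), and let \(m_i\in\widetilde M(U_{\Delta_i})\) be compatible. Fix a nested cofinal exhaustion \(C_1\subseteq C_2\subseteq\cdots\) of \(\Delta\) by nonempty compact \(\Gamma\)-rational polyhedra. By admissibility, each \(C_j\) is covered by finitely many of the \(\Gamma\)-rational compact polyhedra \(C_j\cap\Delta_i\). Restricting the \(m_i\) to these pieces gives a compatible finite family. Proposition~\ref{prop:compact-module-gluing} then yields a unique \(m_{C_j}\in D_{C_j}\otimes_DM\) restricting to each \(m_i|_{C_j\cap\Delta_i}\).

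Next I would check that \(m_{C_j}\) is independent of the chosen finite subfamily, and that \(m_{C_{j+1}}|_{C_j}=m_{C_j}\). For both, take the union of the relevant finite subfamilies: this is still a finite covering of \(C_j\) by compact \(\Gamma\)-rational polyhedra, and uniqueness in Proposition~\ref{prop:compact-module-gluing} applies. Thus \((m_{C_j})_j\) defines an element of \(\varprojlim_j(D_{C_j}\otimes_DM)\). By Proposition~\ref{prop:finite-module-limit}, this element corresponds to a unique \(m\in\widetilde M(U_\Delta)\).

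It remains to show \(m|_{\Delta_i}=m_i\). Apply \eqref{eq:module-presheaf-limit} to \(\Delta_i\) with the exhaustion \(C_j\cap\Delta_i\). This exhaustion is nested, compact and \(\Gamma\)-rational, and it is cofinal because every compact subpolyhedron of \(\Delta_i\) lies in some \(C_j\). Equality can therefore be tested after restriction to each \(C_j\cap\Delta_i\), where it holds by construction, once we discard the empty initial terms. Uniqueness of \(m\) follows the same way: two glued sections agree on every \(C_j\) by compact uniqueness, and hence agree by injectivity of the map in Proposition~\ref{prop:finite-module-limit}.

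I expect the main obstacle to be the bookkeeping in this last step. The restriction maps must be compatible with the projective-limit isomorphisms for the two different exhaustions of \(\Delta\) and \(\Delta_i\). I would handle this by noting that all the maps in question are induced by the natural inclusions of formal ordered-monomial series tensored with \(M\), so the relevant squares commute by functoriality of \(-\otimes_DM\).
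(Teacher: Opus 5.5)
Your proposal is correct and follows essentially the same route as the paper: you glue on each \(C_j\) via Proposition~\ref{prop:compact-module-gluing}, use uniqueness to get independence of the finite subcover and compatibility in \(j\), pass to \(\widetilde M(U_\Delta)\) through Proposition~\ref{prop:finite-module-limit}, and check \(m|_{\Delta_i}=m_i\) on compact subpolyhedra. Your explicit use of the cofinal exhaustion \(C_j\cap\Delta_i\) of \(\Delta_i\) makes the paper's final step, restriction to compact \(\Gamma\)-rational subpolyhedra of \(\Delta_i\), precise.
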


\begin{proof}
Let \(\{U_{\Delta_i}\to U_\Delta\}_{i\in I}\) be an admissible
polyhedral covering, and let \(m_i\in\widetilde M(U_{\Delta_i})\)
be compatible. For nonempty \(\Delta\), choose, as in
Section~\ref{subsec:polyhedral-background}, a nested cofinal compact
\(\Gamma\)-rational exhaustion
\(C_1\subseteq C_2\subseteq\cdots\) of \(\Delta\).

For each \(j\), admissibility provides finitely many \(\Delta_i\)
covering \(C_j\). The nonempty intersections \(C_j\cap\Delta_i\)
form a finite covering of \(C_j\) by compact \(\Gamma\)-rational
polyhedra. Proposition~\ref{prop:compact-module-gluing} therefore
determines a unique section \(m_j\in D_{C_j}\otimes_DM\) whose
restrictions agree with the corresponding \(m_i\). This section is
independent of the finite subcover: two choices have a common finite
refinement, and uniqueness applies there. The same argument shows that
\(m_{j+1}|_{C_j}=m_j\). Proposition~\ref{prop:finite-module-limit}
identifies the compatible family \((m_j)\) with a unique section
\[
m\in D_\Delta^{\mathrm{Fr}}\otimes_DM=\widetilde M(U_\Delta).
\]
For every \(i\), restriction to compact \(\Gamma\)-rational
subpolyhedra of \(\Delta_i\) shows that \(m|_{\Delta_i}=m_i\).
Uniqueness follows in the same way. The empty object has zero sections.
\end{proof}

\begin{theorem}\label{thm:module-sheaf-exact}
The assignment \(M\mapsto\widetilde M\) is an exact functor from the
category of finitely generated left \(D\)-modules to the category of
sheaves of left \(\mathcal O\)-modules on \(\mathcal U\).
\end{theorem}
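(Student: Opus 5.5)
The plan is to make $M\mapsto\widetilde M$ into a functor first, and then check exactness one section at a time. Since exactness in sheaves is subtle, the aim is a statement strong enough that no sheafification is needed.

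\emph{Functoriality.} A homomorphism $\varphi:M\to M'$ of finitely generated left $D$-modules induces, for every nonempty $\Gamma$-rational $\Delta$, the map $\mathrm{id}\otimes\varphi:D_\Delta^{\mathrm{Fr}}\otimes_DM\to D_\Delta^{\mathrm{Fr}}\otimes_DM'$. This map is $D_\Delta^{\mathrm{Fr}}$-linear. It commutes with the restrictions induced by $D_\Delta^{\mathrm{Fr}}\to D_{\Delta'}^{\mathrm{Fr}}$, because both are given by tensoring on different sides. Hence $\widetilde\varphi$ is a morphism of presheaves of left $\mathcal O$-modules, and by Corollary~\ref{thm:module-sheaf} it is a morphism of sheaves. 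Identities and composites are clearly preserved.

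\emph{Exactness.} Take a short exact sequence $0\to M'\to M\to M''\to0$ of finitely generated left $D$-modules. By Corollary~\ref{cor:exact-base-change} (that is, Theorem~\ref{thm:frechet-flat}), the sequence
\[
0\to\widetilde{M'}(U_\Delta)\to\widetilde M(U_\Delta)\to\widetilde{M''}(U_\Delta)\to0
\]
is exact for every nonempty $\Gamma$-rational $\Delta$. For $U_\varnothing$ all terms are $0$. So the sequence $0\to\widetilde{M'}\to\widetilde M\to\widetilde{M''}\to0$ is exact objectwise, as presheaves.

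\emph{From presheaves to sheaves.} It remains to pass from objectwise exactness to exactness in the category of sheaves of $\mathcal O$-modules. The kernel of a morphism of sheaves is the objectwise kernel, so exactness at the left and middle terms is immediate. For the right term, recall that the sheaf image (or cokernel) is the sheafification of the presheaf image. Here the presheaf image of $\widetilde M\to\widetilde{M''}$ is already all of $\widetilde{M''}$, which is a sheaf, so the morphism is an epimorphism of sheaves. The sheaf cokernel is then zero.

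\emph{Expected obstacle.} The main thing to verify is that the category of sheaves of left $\mathcal O$-modules on $\mathcal U$ is abelian, with kernels computed objectwise and cokernels given by sheafifying the presheaf cokernel. This is standard for sheaves of modules over a sheaf of rings on a site, even when no commutativity is assumed (see \cite{KashiwaraSchapira}). With that in place, the objectwise surjectivity argument above suffices, and no sheafification argument beyond this is needed.
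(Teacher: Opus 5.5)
Your proposal is correct and follows the paper's argument: objectwise exactness of \(0\to\widetilde{M'}(U_\Delta)\to\widetilde M(U_\Delta)\to\widetilde{M''}(U_\Delta)\to0\) from Corollary~\ref{cor:exact-base-change}, compatibility with restriction, and hence exactness of the sequence of sheaves. Your additional remarks on functoriality, and on why objectwise surjectivity already gives an epimorphism of sheaves, make explicit what the paper's short proof leaves implicit.
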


\begin{proof}
Let \(0\to M'\to M\to M''\to0\) be an exact sequence of finitely
generated left \(D\)-modules. For every nonempty \(\Gamma\)-rational
polyhedron \(\Delta\subseteq\Omega\),
Corollary~\ref{cor:exact-base-change} shows that
\[
0\longrightarrow D_\Delta^{\mathrm{Fr}}\otimes_DM'
\longrightarrow D_\Delta^{\mathrm{Fr}}\otimes_DM
\longrightarrow D_\Delta^{\mathrm{Fr}}\otimes_DM''\longrightarrow0
\]
is exact. These maps commute with restriction in \(\Delta\), so
\[
0\longrightarrow\widetilde{M'}\longrightarrow\widetilde M
\longrightarrow\widetilde{M''}\longrightarrow0
\]
is an exact sequence of sheaves.
\end{proof}

We conclude by specializing this construction to cyclic modules associated
with tropical \(D\)-ideals. Let \(I\subseteq D\) be a left ideal.
By Theorem~\ref{thm:trop-ideal-polyhedral},
\(\operatorname{Trop}_D(I)\) is the support of a finite
\(\Gamma\)-rational polyhedral complex. Choose such a complex
\(\Sigma_I\), so that
\(|\Sigma_I|=\operatorname{Trop}_D(I)\), and set
\[
U_I=\operatorname{trop}^{-1}(|\Sigma_I|)\subseteq\mathcal S(D).
\]
In general \(U_I\) need not be a single object of \(\mathcal U\), since
\(|\Sigma_I|\) need not be convex. Let \(\mathcal U_I\) be the full
subcategory of \(\mathcal U\) whose objects are the \(U_\Delta\) with
\(\Delta\subseteq|\Sigma_I|\). Intersections of such polyhedra remain
in \(|\Sigma_I|\), so this full subcategory is stable under fiber
products. We equip it with the induced pretopology of
Section~\ref{subsec:sites-sheaves-background}. The domains corresponding to the cells of \(\Sigma_I\) form a finite
polyhedral covering of \(U_I\).

Restricting the structure sheaf, set
\(\mathcal O_I=\mathcal O|_{\mathcal U_I}\). By
Section~\ref{subsec:sites-sheaves-background}, this is a sheaf. Thus
\((\mathcal U_I,\mathcal O_I)\) is a ringed subsite of
\((\mathcal U,\mathcal O)\).

The cyclic module \(D/I\) determines the sheaf \(\widetilde{D/I}\).
Its restriction to \(\mathcal U_I\) is a sheaf of left
\(\mathcal O_I\)-modules, and for every nonempty
\(U_\Delta\in\mathcal U_I\),
\begin{equation}\label{eq:cyclic-module-sections}
\widetilde{D/I}(U_\Delta)\simeq
D_\Delta^{\mathrm{Fr}}/D_\Delta^{\mathrm{Fr}}I.
\end{equation}
Thus the tropical set selects the polyhedral subsite, while the
ideal-dependent analytic data are carried by the cyclic module sheaf.
Unlike a commutative defining ideal, a left ideal need not be two-sided
and therefore does not in general define a quotient structure sheaf.

\begingroup
\small
\hbadness=2000
\let\oldthebibliography\thebibliography
\renewcommand{\thebibliography}[1]{%
  \oldthebibliography{#1}%
  \setlength{\itemsep}{1.5pt}%
}
\bibliographystyle{plain}
\bibliography{references}
\endgroup

\end{document}